\documentclass[a4paper,11pt,english]{article}
\usepackage{amssymb}
\usepackage{mathrsfs}
\usepackage{amsmath}
\usepackage{amsthm}
\usepackage{amstext}
\usepackage{amsopn}
\usepackage{graphicx}
\usepackage{latexsym}
\usepackage{amsfonts,euscript}

\usepackage{color}

\newtheorem{theorem}{Theorem}[section]

\newtheorem{lemma}{Lemma}[section]

\newtheorem{remark}{Remark}[section]
\numberwithin{equation}{section} \textwidth 150mm \textheight 222mm
\oddsidemargin 15pt

\makeatletter

\newcommand{\Rmnum}[1]{\expandafter\@slowromancap\romannumeral #1@}
\makeatother

\newcommand{\f}{\frac}
\newcommand{\R}{\mathbb {R}}

\newcommand{\mc}{\mathcal}

\begin{document}

\title{Propagation dynamics of a reaction-diffusion equation in a time-periodic shifting environment \thanks{The research leading to these results has received financial support from NSF of China and NSERC of Canada. }}
\author{Jian Fang\thanks{Institute for Advanced Study in Mathematics and Department of Mathematics, Harbin Institute of Technology, Harbin, 150001, China. Email: jfang@hit.edu.cn}, \, Rui Peng\thanks{Department of Mathematics and Statistics, Jiangsu Normal University, Xuzhou, 221116, China. Email: pengrui\_seu@163.com} \,  and \, Xiao-Qiang Zhao\thanks{Department of Mathematics and Statistics, Memorial University of Newfoundland, St. John's, NL  A1C 5S7, Canada. Email: zhao@mun.ca}}

\maketitle

\begin{abstract}
This paper concerns the nonautonomous reaction-diffusion equation
\[
u_t=u_{xx}+ug(t,x-ct,u), \quad t>0,x\in\R, 
\]
where $c\in\R$ is the shifting speed,  and the time periodic nonlinearity $ug(t,\xi,u)$  is  asymptotically of KPP type as $\xi \to-\infty$ and is negative as $\xi\to+\infty$. Under a subhomogeneity condition, we show that there is $c^*>0$ such that a unique forced time periodic wave exists if and only $|c|< c^*$ and it attracts other solutions in a certain sense  according to
the tail behavior of initial values. In the case where  $|c|\ge c^*$, the propagation dynamics resembles that of the limiting system as $\xi\to\pm \infty$, depending on the shifting direction.
\end{abstract}

Keywords:  Shifting environment, reaction-diffusion equation, time periodic traveling waves,  spreading properties of solutions.

2010 AMS MSC: 35C07, 35B40, 35K57, 92D25

\section{Introduction}
In this paper, we are interested in  the following nonautonomous  reaction-diffusion  equation in a time-periodic shifting environment:
\begin{equation}\label{main-eq}
\begin{cases}
u_t=u_{xx}+ug(t,x-ct,u),\quad  t>0,x\in\R,\\
u(0,\cdot)=\phi,
\end{cases}
\end{equation}
where $c\in\R$ is the shifting speed and $\phi$ is a bounded and  continuous function. A prototypical function is  $g(t,x,u)=a(t,x)-u$,  which  makes 
\eqref{main-eq} become the KPP-Fisher equation.
This type of equations models the population growth in a shifting media. They may arise from the biological question whether the species can survive from the climate change \cite{BDNZ-BMB09, LBSF-SIAP14, PM-BMB04}. Subject to seasonal succession, climate change provides such a shifting and time periodic environment for the species. More precisely, if $u$ represents the species density, then $x-ct$, a variable of the net per capita growth rate $g(t,x-ct,u)$, can be understood as the functional response to the environmental shifting. Such a nonlinearity may also arise from the epidemiological question whether the pathogen spread can keep pace with its host \cite{FLW-SIAP16}. We will derive a model equation from the pathogen spread viewpoint, as an example of \eqref{main-eq}, in the application section.

Throughout the whole  paper, we make the following assumptions on the function $g$:
\begin{enumerate}
\item[(G1)] $g\in C^1(\R\times \R\times \R_+,\R)$  and $g(t,x,u)$ is $T$-periodic in $t$
 for some $T>0$;
\item[(G2)]
$g(t,x,u)$ is non-increasing in $x\in \R$ and $u\in \R_+ $,  $g(t,-\infty, u)$ 
exists and is strictly decreasing in $u\in \R_+$;
\item[(G3)]$\int_0^T g(t,-\infty,0)dt>0$ and there exists $M>0$ such that 
$g(t,-\infty, M)\leq 0$;
\item[(G4)]$g(t,+\infty, u)$ exists and $\int_0^T g(t,+\infty,0)dt<0$.
\end{enumerate}

Assumptions (G3) and  (G4) imply that the scenario that the environment is favorable at $-\infty$ and unfavorable at $+\infty$. The sign of the shifting speed $c$ determines whether the favorable environment can invade the unfavorable one or the reverse.  It easily follows from (G3) that  the function $ug(t,-\infty,u)$ is of KPP type. As such, at $-\infty$ one has the limiting equation of KPP type
\begin{equation}\label{limiting}
u_t=u_{xx}+ug(t,-\infty,u).
\end{equation}
In view of \cite[Theorem 5.2.1]{ZhaoBook} with $F(t,u)\equiv F_0(t,u)$  and
\cite[Lemma 2.2.1]{ZhaoBook},
the corresponding ordinary differential equation
\begin{equation}\label{ODE}
u'(t)=ug(t,-\infty,u)
\end{equation}
has a globally stable positive periodic solution $\alpha(t)$ in $\R_+\setminus \{0\}$. By \cite[Theorems 4.1 and 4.2]{LYZ2006}
(letting $\tau=0$ and $f(t,u,v)\equiv f(t,u)$),  it follows that 
 the periodic reaction-diffusion equation \eqref{limiting} admits a  spreading speed $c^*$, and $c^*$ is also the minimal speed of time periodic positive traveling waves connecting $\alpha(t)$ to $0$. Moreover, 
  \cite[Lemma 4.1]{LYZ2006} implies that 
\begin{equation}\label{def-cstar}
c^*=2\sqrt{\f{1}{T}\int_0^T g(t,-\infty,0)dt}.
\end{equation}
While at  $+\infty$, one has a different limiting equation 
\begin{equation}\label{limiting2}
u_t=u_{xx}+ug(t,+\infty,u).
\end{equation}
Since $g(t,+\infty,u)\leq g(t,+\infty,0), \, \forall (t,u)\in \R^2_+$, it follows 
from the comparison argument and assumption (G4) that  every nonnegative solution of \eqref{limiting2} converges to zero uniformly for $x\in \R$ as $t\to +\infty$.

The purpose of this paper is to explore how these two limiting equations \eqref{limiting} and \eqref{limiting2} as well as the shifting speed $c$ influence the propagation dynamics of \eqref{main-eq}. 
 Before presenting  our main results, we briefly  review  
some related works,  which highly motived our current research.

Berestycki, Diekmann, Nagelkerke and Zegeleing \cite{BDNZ-BMB09} introduced the following KPP type equation
\begin{equation}\label{auto-eq}
u_t=u_{xx}+f(x-ct, u)
\end{equation} 
to study the impact of climate shift on the dynamics of a biological species. In \cite{BDNZ-BMB09}, assuming that favorable environment is surrounded by unfavorable ones, i.e., 
\begin{equation}\label{cc1}
\text{$f(x,\cdot)<0$ when $|x|$ is large},
\end{equation}
they showed that the global dynamics is determined by the sign of the generalized eigenvalue $\lambda_1$, which is defined as 
\begin{equation}
\lambda_1:=\sup\{\lambda: \exists \phi\in  C^2(\R), \phi>0, s.t.  \phi''+c\phi'+\partial_u f(x,0)\phi+\lambda\phi\le 0\}.
\end{equation}
More precisely, if $\lambda_1\ge 0$ then the solution goes to zero. While if $\lambda_1<0$ then the solution converges uniformly in $x\in\R$ to the unique solution of $U''+cU'+f(x,U)=0$. Such a result was also established for high dimensional and mixed type environments by Berestycki and Rossi in \cite{BR-DCDS08, BR-DCDS09}. Without condition \eqref{cc1}, the sign of $\lambda_1$ cannot determine the global dynamics of \eqref{auto-eq}. In particular,  Under the condition that $f(x,\cdot)$ is positive when $x\to-\infty$, Berestycki and Fang \cite{BF18} showed that $\lambda_1<0$ is the necessary and sufficient condition for the existence of the minimal positive solution of $U''+cU'+f(x,U)=0$.  This property,  together with the classification of solutions of $U''+cU'+f(x,U)=0$, was then used to derive  the global dynamics of \eqref{auto-eq}. The study in \cite{BF18} is closely related to some questions raised in \cite{BR-JEMS06, H-AIHP97-2}.  In \cite{BR-DCDS08, BR-DCDS09}, the authors also investigated  \eqref{main-eq} by assuming the sign of eigenvalue $\lambda_1$ of the related time periodic operator and the uniqueness of forced time periodic waves (see \cite[Theorem 3.6]{BR-DCDS08}). From these literatures, we have seen that $\lambda_1<0$ is sufficient for the existence of at least one solution of $U''+cU'+f(x,U)=0$, but the sign of $\lambda_1$ cannot determine the non-existence or uniqueness of such solutions. 

It is remarkable that the generalized eigenvalues in unbounded domains are useful tools in the study of propagation dynamics. Some different generalized eigenvalues were introduced and deeply investigated in \cite{BHR-AMRA07, BR-JEMS06, BR-CPAM15}.

Recently there is an increasing interest in the study of the influence of shifting environment on biological invasions. Zhou and Kot  \cite{ZK-Springer13}  introduced a class of integro-difference equations to model the effects of climate-driven range shifts (see also \cite{BL-preprint, ZhouKot-14, LMS, LBBF16}).  Du, Wei and Zhou \cite{DWZ-preprint} proposed a free boundary problem in such a shifting environment, see also \cite{LeiDu-DCDS17, WeiZhangZhou-CVPDE16}. Hu and Li \cite{HL-JDE15} formulated such a problem in a discrete media. Vo \cite{Vo-JDE15} investigated the persistence of species facing a forced time periodic and locally favorable environment in a cylindrical or partially periodic domain and established various  results on the existence and uniqueness of the forced waves. Wang, Li and Zhao \cite{LWZ18} studied the propagation
dynamics of a nonlocal dispersal equation in a shifting environment, see also \cite{ABR-preprint, WZ18}. Bouhours and Giletti  \cite{BG-2016preprint}  studied a generalized monostable reaction-diffusion equation in shifting environment, including the scenario with Allee effect. It is worthy to point out that a shifting environment can also arise in other ways. For example, Holzer and Scheel \cite{HS-SIAM14} considered a partially decoupled reaction diffusion system of two equations, where a wave solution for the first equation provides a shifting environment for the second one, see also \cite{CTW17, FLW-SIAP16}.

In our study of the global dynamics of \eqref{main-eq}, the positive time periodic wave solutions of \eqref{main-eq} having the form $u(t,x)=U(t,x-ct)$ will play an important role, where $c$ is the shifting speed given in \eqref{main-eq}. Clearly, $U(t,x)$ satisfies
\begin{equation}\label{TW}
U_t=U_{xx}+cU_x+Ug(t,x,U), \quad t\in\R,x\in\R
\end{equation}
with the periodic constraints
\begin{equation}
U(t+T, x)=U(t,x).
\end{equation}
By  some {\it a priori} estimates, we will have $0<U<\alpha(t)$ and $U(t,+\infty)=0$, where $\alpha(t)$ is the unique positive time periodic solution of \eqref{ODE}, see Lemma \ref{est}.

We call $U$ a {\it forced KPP wave} of \eqref{main-eq} if 
\begin{equation}\label{BC1}
U(t,-\infty)=\alpha(t),\quad U(t,+\infty)=0
\end{equation}
and a {\it forced pulse wave} of \eqref{main-eq}  if 
\begin{equation}\label{BC2}
U(t,-\infty)=0, \quad U(t,+\infty)=0,
\end{equation}
where all the limits hold uniformly for all $t\in [0,T]$.

Let  $c^*$ be defined as in \eqref{def-cstar}. Our first result  is about the existence, uniqueness and nonexistence of the forced KPP waves.

\begin{theorem}\label{ThTW}
The forced wave $U(t,x-ct)$ exists if and only if $c<c^*$, and such a wave  is  unique when exists.
\end{theorem}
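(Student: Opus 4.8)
The plan is to work throughout in the moving-frame equation \eqref{TW} and to exploit the order structure of its period-$T$ solution map $Q$, treating existence, nonexistence and uniqueness separately.

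\emph{Existence for $|c|<c^*$.} First I would check that $\bar U\equiv\alpha(t)$ is a spatially homogeneous super-solution of \eqref{TW}: using (G2) and $\alpha'=\alpha g(t,-\infty,\alpha)$ one gets $\alpha_t-\alpha_{xx}-c\alpha_x-\alpha g(t,x,\alpha)=\alpha[g(t,-\infty,\alpha)-g(t,x,\alpha)]\ge0$. Iterating $Q$ downward from $\alpha$ yields a decreasing sequence whose limit $U^+$ is, by interior parabolic estimates, the maximal $T$-periodic solution with $0\le U^+\le\alpha$; moreover $U^+$ dominates every forced wave, so existence is equivalent to $U^+\not\equiv0$. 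Differentiating \eqref{TW} in $x$ and using $g_x\le0$, the maximum principle gives $\partial_xU^+\le0$, so the limits $U^+(t,\pm\infty)$ exist, $U^+(t,+\infty)=0$ by Lemma \ref{est}, and $U^+(t,-\infty)$ solves \eqref{ODE}, hence equals $0$ or $\alpha$; by monotonicity a nontrivial $U^+$ is automatically a forced KPP wave. To force $U^+\not\equiv0$ I would construct a small sub-solution $\delta\varphi$ from the principal eigenfunction of the linearization of \eqref{TW} at $0$ on a large interval in the region $x\ll0$; the associated principal eigenvalue equals, in the time-averaged sense, $\frac1T\int_0^Tg(t,-\infty,0)\,dt-\frac{c^2}{4}$, which is positive exactly when $|c|<c^*$, so $\delta\varphi$ is a sub-solution for small $\delta$ and $U^+\ge\delta\varphi>0$.

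\emph{Existence for the remaining $c\le-c^*$.} The eigenvalue construction degenerates once $|c|\ge c^*$, so to reach all $c<c^*$ I would invoke monotonicity in the speed. If $U_0$ is the (nonincreasing in $x$) forced wave already obtained at some $c_0\in(-c^*,c^*)$, then for any $c<c_0$,
\[
\partial_tU_0-\partial_{xx}U_0-c\,\partial_xU_0-U_0g(t,x,U_0)=(c_0-c)\partial_xU_0\le0,
\]
so $U_0$ is a nontrivial sub-solution of the $c$-equation lying below $\alpha$. Hence the maximal wave at speed $c$ satisfies $U_c^+\ge U_0>0$ and is a forced wave; letting $c_0\uparrow c^*$ covers every $c<c^*$.

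\emph{Nonexistence for $c\ge c^*$.} Arguing by contradiction, if $U$ is a forced wave then, since $g(t,x-ct,\cdot)\le g(t,-\infty,\cdot)$ by (G2), the function $u(t,x)=U(t,x-ct)$ is a sub-solution of the limiting KPP equation \eqref{limiting}; comparison with the solution $\tilde u$ of \eqref{limiting} from the front-like datum $U(\cdot)$ gives $u\le\tilde u$. As \eqref{limiting} has spreading speed $c^*$, one has $\tilde u(t,x)\to0$ for $x\ge(c^*+\varepsilon)t$, so when $c>c^*$, $U(t,\xi_0)=u(t,ct+\xi_0)\to0$ along the frame; the $T$-periodicity of $U(\cdot,\xi_0)$ then forces $U\equiv0$, contradicting $U(t,-\infty)=\alpha$.

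\emph{Critical speed and uniqueness.} I expect the borderline $c=c^*$ to be the main obstacle, since the comparison above only places $U(t,\xi_0)$ on the critical ray $x=c^*t$, where the bare spreading estimate is inconclusive; I would close it either by invoking the logarithmic (Bramson-type) front delay for the time-periodic KPP equation, giving $\tilde u(t,c^*t+\xi_0)\to0$, or by sliding the translates of the critical periodic wave $\Phi^*$ of \eqref{limiting} at speed $c^*$ (a super-solution of \eqref{TW} by (G2)), the delicate point being tail control as $x\to\pm\infty$. For uniqueness I would use the subhomogeneity of $f(t,x,u)=ug(t,x,u)$ coming from $g$ nonincreasing in $u$: for two forced waves $U_1,U_2$ set $\sigma^*=\inf\{\sigma\ge1:\sigma U_2\ge U_1\}$; as $\sigma U_2$ is a super-solution for $\sigma\ge1$, if $\sigma^*>1$ the strong maximum principle and strict subhomogeneity give a contradiction at an interior contact point, once contact is excluded at $x=-\infty$ (both limits $\alpha$, with $\sigma^*\alpha>\alpha$) and at $x=+\infty$ (both waves decaying at the common rate $\nu_+$ fixed by the linearization at $0$ on the unfavorable side). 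Thus $\sigma^*=1$, $U_1\le U_2$, and symmetrically $U_2\le U_1$, so $U_1=U_2$.
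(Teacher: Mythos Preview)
Your existence argument is sound, and for $c\le-c^*$ it is more economical than the paper's: the paper constructs an explicit sub-solution of the form $\max\{0,\delta p(t)(e^{\mu x}-Me^{(\mu+\eta)x})\}$ with carefully tuned $\mu,\eta,p,M,\delta$, whereas your observation that the nonincreasing wave $U_{c_0}$ already built at any $c_0\in(-c^*,c^*)$ satisfies $(c_0-c)\partial_xU_{c_0}\le0$ and hence serves as a sub-solution at every smaller speed is clean and bypasses that computation entirely.

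The nonexistence argument, however, has a genuine gap. You invoke the spreading-speed upper bound ``$\tilde u(t,x)\to0$ for $x\ge(c^*+\varepsilon)t$'' for the solution of \eqref{limiting} started from the front-like datum $U(0,\cdot)$, but this bound is \emph{false} for slowly decaying tails: if $U(0,x)$ behaves like $e^{-\lambda x}$ with $0<\lambda<c^*/2$, then $\tilde u$ spreads at speed $\lambda+(c^*)^2/(4\lambda)>c^*$. Lemma~\ref{est} gives only $U(0,+\infty)=0$ with no rate, so the step ``$\tilde u(t,ct+\xi_0)\to0$'' is unjustified, and your treatment of $c=c^*$ is an acknowledged hope rather than an argument. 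The paper avoids spreading-speed machinery: it first establishes (Claim~1 in Lemma~\ref{nonex}) that any bounded positive periodic solution of \eqref{TW} satisfies $U(t,x)=o(e^{(\mu_c+\eta)x})$ as $x\to+\infty$, using (G4) and a barrier, and then, since $\mu_c<\lambda_{1,c}$, slides the super-solution $Me^{\lambda_{1,c}x}\psi(t)$ down until it touches $U$ at a finite point, where the strong maximum principle gives the contradiction uniformly for all $c\ge c^*$.

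Your uniqueness sketch has the analogous difficulty at $+\infty$. Common exponential rate $\nu_+$ does not exclude $U_1(t,x)/U_2(t,x)\to\sigma^*>1$ as $x\to+\infty$; in that case $\sigma^*=\sup U_1/U_2$ is never attained at a finite point, $\sigma^*U_2-U_1>0$ everywhere, and the strong maximum principle never engages. The paper resolves this by an $\epsilon$-penalization: it chooses a time-periodic $p(t,x)$ with positive limits at $\pm\infty$, sets $k_\epsilon=\inf\{k:kU_1\ge U_2-\epsilon p\}$, and shows that the resulting touching point $(t_\epsilon,x_\epsilon)$ for $w_\epsilon=k_\epsilon U_1-U_2+\epsilon p$ leads to a contradiction via a differential inequality in which $p$ is designed so that its contribution has the right sign near $\pm\infty$.
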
 

At $-\infty$, \eqref{main-eq} behaves as the time periodic  KPP equation, for which $c^*$ is the minimal speed of traveling waves \cite{AW-AM78, Fisher, KPP}. In sharp contrast for problem \eqref{main-eq} with the aforementioned shifting nonlinearity, Theorem \ref{ThTW} concludes that $c^*$ is the superemum of the wave speed. 
 
The second result is about the existence, multiplicity and the nonexistence of the forced pulse waves. For this purpose, we need the following additional condition on the asymptotic behavior of $g(t,x,0)$ as $x\to-\infty$: 
\begin{equation}\label{c4}
\text{$\sup_{t\in [0,T]}|g(t,x,0)-g(t,-\infty,0)|=o(|x|^{-r_0-m}) $ for some $r_0>0$ and $m\in \{1,2\}$}.
\end{equation}

\begin{theorem}\label{ThPulse}
The following statements are valid:
\begin{enumerate}
\item[(i)]If $c>-c^*$, then there is no forced pulse wave.
\item[(ii)]If either $c< -c^*$ and \eqref{c4} with $m=1$ holds, or $c=-c^*$ and \eqref{c4} with $m=2$ holds,  then there exist infinitely many forced pulse waves.
\end{enumerate}
\end{theorem}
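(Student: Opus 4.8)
The plan is to treat the two statements separately: the nonexistence statement (i) rests on a spreading/comparison argument localized at the favorable end $x=-\infty$, while the existence and multiplicity statement (ii) rests on a sub- and super-solution construction in which the free parameter is the amplitude of the leading exponential tail at $-\infty$.

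For (i), suppose to the contrary that a forced pulse wave $U$ exists with $c>-c^*$. Writing $\bar g(x):=\f1T\int_0^T g(t,x,0)\,\di t$, assumptions (G2)--(G3) together with \eqref{def-cstar} give that $\bar g(x)$ increases to $(c^*)^2/4$ as $x\to-\infty$, so I can fix $x_0\ll0$ for which $c^*_0:=2\sqrt{\bar g(x_0)}$ still satisfies $c>-c^*_0$. Since $U>0$ is continuous and $T$-periodic in $t$, there is $\delta_0>0$ with $U(t,x_0)\ge\delta_0$. On the half-line $(-\infty,x_0]$ the monotonicity of $g$ in $x$ yields $g(t,x,U)\ge g(t,x_0,U)$, so $U$ is a supersolution there of the spatially homogeneous equation $V_t=V_{xx}+cV_x+Vg(t,x_0,V)$. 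Removing the advection by $y=x+ct$ turns this into a $T$-periodic KPP equation whose leftward spreading speed in $x$ equals $-c^*_0-c<0$; hence solutions issued from any positive localized datum invade toward $-\infty$ and converge to the positive $T$-periodic state $\beta(t)$ of $w'=wg(t,x_0,w)$. Comparing $U$ from below with such an invading solution on $(-\infty,x_0]$, with the floor $U(t,x_0)\ge\delta_0$ as boundary datum, and then using the $T$-periodicity of $U$ to pass $t\to\infty$ along multiples of $T$, I obtain $U(t,x)\ge\beta(t)>0$ for all $x\le x_0$, contradicting $U(t,-\infty)=0$. Note that this argument breaks down precisely at $c=-c^*$, where no admissible $x_0$ with $c>-c^*_0$ exists, consistently with (ii).

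For (ii), by Lemma \ref{est} every solution of \eqref{TW} automatically satisfies $0<U<\alpha(t)$ and $U(t,+\infty)=0$, so the only tail that must be engineered is the one at $-\infty$. The linearization of \eqref{TW} about $U\equiv0$ at $x=-\infty$ admits Floquet modes $e^{-\lambda x}\psi(t)$ with $\lambda$ a root of $\lambda^2-c\lambda+(c^*)^2/4=0$: for $c<-c^*$ these are two distinct negative roots $\lambda_-<\lambda_+<0$, while for $c=-c^*$ there is a double root $\lambda=c/2=-c^*/2$ whose resonant companion grows like $|x|e^{-\lambda x}$. I will construct, for each prescribed amplitude $a>0$, a pair $\underline U_a\le\bar U_a$ of sub- and super-solutions of \eqref{TW}, both decaying to $0$ at the two ends, with leading behavior $\sim a\,e^{-\lambda_+ x}\psi(t)$ at $-\infty$ (respectively $\sim a\,|x|\,e^{c^* x/2}\psi(t)$ in the critical case). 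Since $ug(t,x,u)$ is of KPP/subhomogeneous type, it suffices to take $\bar U_a$ a supersolution of the associated linear problem $V_t=V_{xx}+cV_x+g(t,x,0)V$ capped by $\alpha(t)$, and to obtain $\underline U_a$ by subtracting a faster-decaying correction and truncating; the standard monotone iteration between $\underline U_a$ and $\bar U_a$ then yields a solution $U_a\in[\underline U_a,\bar U_a]$ of \eqref{TW}, which is nontrivial and hence, by the strong maximum principle, a genuine forced pulse wave obeying \eqref{BC2}.

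Multiplicity then follows by varying $a$: solutions attached to well-separated amplitudes have distinct leading coefficients at $-\infty$ and are therefore distinct functions, giving infinitely many forced pulse waves. The delicate point, and the place where \eqref{c4} and the dichotomy $m\in\{1,2\}$ enter, is matching the prescribed tail against the perturbation $g(t,x,0)-g(t,-\infty,0)$ of the potential. For $c<-c^*$ the two decay rates are separated, the dominant mode $e^{-\lambda_+ x}$ is non-resonant, and a correction of order $o(|x|^{-r_0-1})$ suffices to keep $\bar U_a,\underline U_a$ ordered super/sub-solutions. At $c=-c^*$ the coalescence of the roots makes the tail resonant: the perturbation must be integrated against the algebraically growing companion mode, and only the stronger decay $o(|x|^{-r_0-2})$ secures solvability of this asymptotic problem and preserves the ordering with a common leading coefficient. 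I expect this critical-speed tail analysis to be the main obstacle of the proof.
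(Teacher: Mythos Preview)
Your proposal is largely on target, and for part (ii) it follows the same scheme as the paper: a super-solution of pure exponential type $\bar v=\delta\,p(t)e^{\mu_1 x}$ (with $\mu_1>0$ the smaller root of $\mu^2+c\mu+\tfrac{(c^*)^2}{4}=0$) together with a sub-solution obtained by subtracting a slower algebraic correction, $\underline v=\delta\,p(t)e^{\mu_1 x}\bigl[|x|^k-M|x|^{k-r}\bigr]$, with $k=0$ for $c<-c^*$ and $k=1$ for $c=-c^*$. The computation that makes $\underline v$ a sub-solution is exactly where \eqref{c4} enters: for $c<-c^*$ the dominant term of $\mathcal L\underline v$ is of order $|x|^{-r-1}$ (coming from $(2\mu_1+c)\ne 0$), so one needs the potential error $g(t,x,0)-g(t,-\infty,0)$ to be $o(|x|^{-r_0-1})$; at $c=-c^*$ one has $2\mu_1+c=0$, this leading term vanishes, and the next term is of order $|x|^{-r-2}$, forcing the stronger hypothesis $m=2$. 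Your heuristic about ``integrating against the resonant companion mode'' points in the right direction but the actual mechanism is this drop in the order of the leading term.

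There are two places where your argument differs from the paper's and where you should be careful:

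\textbf{Part (i).} The paper argues by sliding: for $c\ge c^*$ it quotes Lemma~\ref{nonex} (no positive solution at all), and for $c\in(-c^*,c^*)$ it takes the explicit compactly supported sub-solution $\underline v(t,x)=\delta\,p(t)e^{-cx/2}\sin\frac{\pi(x+M)}{L}$ built in Lemma~\ref{ex1}, translates and scales it until it touches $U$ from below, and applies the strong maximum principle. Your half-line spreading argument is morally equivalent and works for $c\in(-c^*,c^*)$ (choose $x_0$ with $|c|<c_0^*$), but as written it does not cleanly cover $c\ge c^*$: there $c>c_0^*$ for every admissible $x_0$, the spreading interval in the $x$-frame drifts entirely to $-\infty$, and it is the boundary source at $x_0$ rather than the bulk spreading that must be invoked to force $\liminf_{x\to-\infty}U>0$. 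This can be repaired, but it is simpler to split off $c\ge c^*$ via Lemma~\ref{nonex} as the paper does.

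\textbf{Multiplicity.} The paper obtains infinitely many pulses by shifting the sub-/super-solution pair to the left so that the new sub-solution strictly exceeds the previously constructed pulse at some point; the new pulse is then necessarily different. Your alternative---parametrizing by the amplitude $a$ of the leading tail and reading off distinctness from the leading coefficient---is cleaner when it works, but it requires $\underline U_a$ and $\bar U_a$ to share the \emph{same} leading asymptotics so that the coefficient of $U_a$ is pinned down. For $c<-c^*$ this is automatic ($\underline v/\bar v\to 1$). For $c=-c^*$, however, the paper's pair has $\bar v\sim \delta e^{\mu_1 x}$ while $\underline v\sim \delta|x|e^{\mu_1 x}$, so the leading behaviors do not match; to run your argument in the critical case you must replace $\bar v$ by a super-solution of the form $\delta\,p(t)|x|e^{\mu_1 x}$ (which is indeed a super-solution since it solves the linearized limiting equation exactly and $g(t,x,u)\le g(t,-\infty,0)$). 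You allude to this resonant super-solution but do not construct it; without it your multiplicity argument at $c=-c^*$ has a gap, whereas the paper's shifting trick sidesteps the issue entirely.
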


The third result is about the propagation behavior of the solutions of \eqref{main-eq}.

\begin{theorem}\label{ThSS}
Assume that  $\phi\in C(\R,\R_+)\setminus\{0\}$ is bounded.  Let $u(t,x)$ be the solution of \eqref{main-eq} with $u(0,x)=\phi(x)$. Then the following propagation dynamics holds:
\begin{enumerate}
\item[(i)]If $c\le -c^*$ and $\int_\R e^{-\f{c^*}{2}x}\phi(x)dx<+\infty$, then $\lim_{t\to\infty}\sup_{x\in\R}u(t,x)=0$.
\item[(ii)]If $c\ge c^*$, then 
\[
\lim_{t\to\infty}\sup_{|x|\le \mu t} |u(t,x)-\alpha(t)|=0,\, \forall  \mu\in(0,c^*),\, \, \lim_{t\to\infty}\sup_{|x|\ge c^*t-\gamma \ln t} u(t,x)=0, \, \forall \gamma\in \left(0,\f{2}{c^*}\right).
\]
\item[(iii)]If $c\in (-c^*,c^*)$, then 
\begin{equation}\label{111}
\lim_{t\to\infty}\sup_{x\ge-\mu t} |u(t,x)-U(t,x-ct)|=0,\quad \forall \mu\in (0,c^*),
\end{equation}
where $U(t,x)$ is the unique KPP wave of \eqref{TW}. Further, if $\int_\R e^{-\f{c^*}{2}x}\phi(x)dx<+\infty$ then
\begin{equation}\label{222}
\text{$\lim_{t\to\infty}\sup_{x\le -c^*t+\gamma \ln t} u(t,x)=0, \quad \forall \gamma \in \left(0,\f{2}{c^*}\right)$},
\end{equation}
while if $\liminf_{x\to-\infty}\phi(x)>0$  and there exists $\delta>0$ such that $g_u(t,\xi, u)<-\delta$ for $t\in [0,T],\xi\in\R$ and $u\in [-\delta, \delta+ \max_{t\in[0,T]}\alpha(t)]$, then
\begin{equation}\label{333}
 \lim_{t\to\infty}\sup_{x\in\R} |u(t,x)-U(t,x-ct)|e^{\sigma t}=0\quad \text{for some $\sigma>0$}.
\end{equation}
\end{enumerate}
\end{theorem}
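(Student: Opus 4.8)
The plan is to pass to the moving frame $\xi=x-ct$, so that $v(t,\xi):=u(t,\xi+ct)$ solves $v_t=v_{\xi\xi}+cv_\xi+vg(t,\xi,v)$ on a stationary, $T$-periodic and monotone environment, and then to locate two fronts by comparison. Lemma \ref{est} and the parabolic comparison principle give the a priori bound $0\le v\le\max\{\|\phi\|_\infty,M\}$, reducing everything to front tracking. The engine of all three parts is a single family of explicit barriers. Writing $\bar g:=\f1T\int_0^T g(t,-\infty,0)\,dt=\left(\f{c^*}2\right)^2$ (cf. \eqref{def-cstar}) and $\psi(t):=\exp\big(\int_0^t[g(r,-\infty,0)-\bar g]\,dr\big)$, which is $T$-periodic, a direct computation shows that for every $\mu\in\R$ the function $\psi(t)e^{s(\mu)t}e^{-\mu\xi}$, with $s(\mu)=\mu^2-c\mu+\bar g$, is a supersolution of the linearization $v_t=v_{\xi\xi}+cv_\xi+g(t,\xi,0)v$, the leftover term being $g(t,-\infty,0)-g(t,\xi,0)\ge0$ by (G2); since $g(t,\xi,v)\le g(t,\xi,0)$ for $v\ge0$, it is also a supersolution of the full nonlinear problem. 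The exponents that matter are $\mu=-\f{c^*}2$ (the critical tail, $s(-\f{c^*}2)=\f{c^*}2(c+c^*)$) and, back in the original frame, the critical fronts $\psi(t)e^{\mp\frac{c^*}2(x\mp c^*t)}$ travelling at speed $c^*$.

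For part (i) one has $v\le V$, where $V$ solves the above linear equation with $V(0,\cdot)=\phi$. A short-time heat-kernel estimate converts the weighted integrability $\int_\R e^{-\frac{c^*}2x}\phi\,dx<\infty$ into the pointwise bound $V(t_0,\xi)\le Ce^{c^*\xi/2}$ at one fixed $t_0>0$; restarting from $t_0$ and comparing with the capped barrier $\min\{\text{const},\,C'\psi(t)e^{s(-c^*/2)t}e^{c^*\xi/2}\}$ gives $\sup_\xi v(t,\xi)\to0$ whenever $s(-\f{c^*}2)<0$, i.e. $c<-c^*$. The borderline $c=-c^*$, where $s=0$ and the exponential barrier is merely bounded, is the technical heart of this part: I would sharpen it into a Bramson-type barrier centred at $-c^*t+\gamma\ln t$ multiplied by a self-similar Gaussian correction, and use the integrability to check that this refined supersolution still dominates the data and tends to $0$.

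Part (ii) is a two-sided squeeze about the KPP speed $c^*$. For the cutoff estimate I would compare $u$ from above with the original-frame critical front $\min\{\text{const},\,B\,\psi(t)e^{-\frac{c^*}2(x-c^*t-\gamma\ln t)}\}$; since $\phi$ is only bounded, the initial ordering on the right is recovered by first letting the unfavourable tail ($g(t,\xi,0)<0$ for large $\xi$) drive $u$ down over a short time, after which the barrier dominates and yields $\sup_{|x|\ge c^*t-\gamma\ln t}u\to0$. For establishment, on $|x|\le\mu t$ with $\mu<c^*\le c$ one has $x-ct\le(\mu-c)t\to-\infty$, hence $g(t,x-ct,u)\ge g(t,\xi_0,u)$ for any fixed $\xi_0$; the frozen KPP equation at $\xi_0$ has spreading speed $c^*(\xi_0)\uparrow c^*$, so for $\xi_0$ sufficiently negative a compactly supported travelling subsolution of speed in $(\mu,c^*(\xi_0))$ from \cite{LYZ2006} invades $|x|\le\mu t$ and pushes $u$ up to $\alpha_{\xi_0}(t)\uparrow\alpha(t)$; combined with the subsolution property of $u$ for the limiting equation \eqref{limiting}, whose solution tends to $\alpha(t)$, the two bounds squeeze $u\to\alpha(t)$.

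In the main case $c\in(-c^*,c^*)$ the forced KPP wave $U$ exists and is unique by Theorem \ref{ThTW}. For \eqref{111} I would, after the establishment argument of part (ii) raises $v$ above $\kappa_-\alpha(t)$ on the favourable side out to $\xi\ge-(\mu+c)t$ while Lemma \ref{est} keeps $v\le\alpha(t)$ and near $0$ ahead of the interface, trap $v$ between the ordered barriers $\kappa_-U$ and $\kappa_+U$ with constants $0<\kappa_-<1<\kappa_+$, which are sub/supersolutions precisely because (G2) renders $ug$ subhomogeneous; note that no spatial shift of $U$ is admissible, the environment depending on $\xi$. A part-metric/$\omega$-limit argument, invoking the uniqueness of Theorem \ref{ThTW} and the strong maximum principle, then forces $\kappa_\pm\to1$ and gives $v\to U$ uniformly on $\xi\ge-(\mu+c)t$, i.e. \eqref{111}. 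Estimate \eqref{222} is the refined critical-front barrier of part (i), now applied in the original frame to the light left tail, placing the second front at $-c^*t+\gamma\ln t$; while \eqref{333} holds once $\liminf_{x\to-\infty}\phi>0$ fills the favourable far field so that no left front forms, after which the strict dissipativity $g_u<-\delta$ makes $(1\pm\eta e^{-\beta t})U$ genuine super/subsolutions for small $\eta,\beta>0$ and furnishes exponential-in-time convergence uniformly on $\R$. The principal obstacle is \eqref{111}: building barriers that are simultaneously sharp near the transitional interface, where only subhomogeneity rather than strict monotonicity is available, and valid out to the two very different far fields, and matching them to the front produced by the spreading analysis; reconciling this wave-attractivity on the right with the KPP front at $-c^*t$ on the left is where the argument is most delicate, with the recurring Bramson-type front localization the main auxiliary difficulty.
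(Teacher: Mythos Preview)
Your overall architecture—exponential barriers in the moving frame, Bramson-type front localization, and frozen-KPP subsolutions for establishment—is close to the paper's, and parts (i) and (ii) are essentially the same route as Lemmas~\ref{conv0}, \ref{largespeed} and Remark~\ref{remark1}. Two places deserve comment.

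\medskip
\textbf{A genuine gap in \eqref{333}.} Your proposed barriers $w^\pm=(1\pm\eta e^{-\beta t})U$ are \emph{not} super/sub-solutions on all of $\R$. A direct computation gives, for the supersolution candidate,
\[
w^+_t-w^+_{\xi\xi}-cw^+_\xi-w^+g(t,\xi,w^+)
=\eta e^{-\beta t}U\Bigl[-\beta+\delta'(1+\eta e^{-\beta t})U(t,\xi)\Bigr]
\]
for some $\delta'\in(0,\delta]$ coming from $g_u<-\delta$, and this is strictly negative for large $\xi$ because $U(t,+\infty)=0$; the subsolution fails symmetrically. The strict dissipativity only produces a term proportional to $U^2$, which cannot absorb the $-\beta U$ loss where $U$ is small. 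The paper's Lemma~\ref{exponential} fixes precisely this by adding an \emph{additive} correction $\pm\sigma\rho e^{-\sigma t}p(t,\xi)$, where $p$ is a positive $T$-periodic function tailored to satisfy a linear inequality involving $g(t,\xi,0)$ for $\xi$ large (where $\int_0^T g(t,\xi,0)\,dt<0$ does the work) and a different one for $\xi$ small. Without this additive term your exponential rate cannot be closed.

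\medskip
\textbf{A different route for \eqref{111}.} For the attractivity of $U$ on $\{x\ge -\mu t\}$ you propose sandwiching by $\kappa_\pm U$ and a part-metric/$\omega$-limit argument. The observation that $\kappa U$ is a sub/super-solution for $\kappa\lessgtr 1$ is correct, but the \emph{global} ordering $\kappa_-U\le v$ you need to start the iteration fails for, say, compactly supported $\phi$: at any finite time $v(t_0,\xi)\to 0$ as $\xi\to-\infty$ while $\kappa_-U(t_0,\xi)\to\kappa_-\alpha(t_0)>0$. So the trapping can only be set up on a half-line and then requires control of the moving boundary, which you do not supply. The paper instead splits $\{x\ge-\mu t\}$ into a fixed half-line $\{\xi\ge -x_0\}$, on which the Poincar\'e-map monotone iteration of Lemma~\ref{P}(iv) together with Lemma~\ref{uniformconv} gives convergence to $U$, and a moving slab $\{-(c^*-\epsilon)t\le x\le -x_0+ct\}$, where comparison with the frozen KPP equation at $\xi=-x_0$ (whose spreading speed exceeds $c^*-\epsilon$ and whose periodic state is within $\eta$ of $\alpha$) pushes $u$ up to $\alpha(t)$; since $U\approx\alpha$ there, both pieces match. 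This region splitting is what replaces your part-metric step and avoids the global-ordering obstruction.
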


Roughly speaking, if the environment shifts with a moderate speed, then the species eventually propagates like the unique forced time periodic KPP type wave that has the same speed as the environment shifting one; if the environment shifts towards left with large speed, then the good environment shrinks so quickly that the species cannot follow, leading to extinction;  if the environment shifts towards right with large speed, then the good environment expands faster than species propagation. 

When modeling the shifting environment subject to seasonal changes, one may naturally come up with the reaction term $g(t, x-c(t), u)$, where $g(t,x,u)$ and $c'(t)$ are both $T$-periodic in $t$. It means that the environment moves with a periodically fluctuating speed. We remark that such a reaction term can also be casted into the one in system \eqref{main-eq}. Indeed, let $c_0$ be average of $c'(t)$ and $a(t):=c_0t-c(t)$.  Then we have $x-c(t)=x-c_0 t+a(t)$, and 
\[
a(t+T)-a(t)=c_0T+c(t)-c(t+T)=c_0T-\int_0^Tc'(t)dt=0,\quad t\in\R.
\]
Define $f(t,x,u):=g(t,x+a(t),u)$.  It easily follows that $f(t,x,u)$ is
$T$-periodic in $t$, and $g(t, x-c(t), u)=f(t,x-c_0t,u)$.

The ideas for the proof of Theorems \ref{ThTW}-\ref{ThSS} are mainly from dynamical systems and elliptic/parabolic equations, including the property of Poincar\'e maps, super-sub solution method and the sliding argument. Since it is still unclear how the generalized eigenvalues depend on the parameters, we will not directly apply the results of generalized eigenvalues, but some ideas behind their proofs will be useful in the construction of various super- and 
sub-solutions. 

The rest of this paper is organized as follows. In section 2, some preliminary properties are presented, including the a priori estimate on possible forced waves. In sections 3 and 4, the forced KPP type and pulse waves are investigated, respectively. Section 5 is devoted to the study of spreading properties of
solutions to the initial value problem. Finally, a model arising from the pathogen spread among the invasive host is discussed to illustrate the 
obtained analytic results. 

\section{Preliminaries}

We first give some notations that will be  used hereafter. By the definition of $c^*$ in \eqref{def-cstar}, we see that for $|c|\ge c^*$ equation
\begin{equation}\label{ave}
\f{1}{T}\int_0^T [\lambda^2+c\lambda+g(s,-\infty,0)]ds=0
\end{equation}
admits two real solutions. Let  $\lambda_{1,c}$ to be the one with smaller absolute value. More precisely, 
\begin{equation}\label{lambda1}
\lambda_{1,c}:=
\begin{cases}
\f{-c+\sqrt{c^2-(c^*)^2}}{2}, & c\ge c^*\\
\f{-c-\sqrt{c^2-(c^*)^2}}{2}, &c\le-c^*.
\end{cases}
\end{equation}
Clearly, $\lambda_{1,c}>0$ when $c\le -c^*$, and $\lambda_{1,c}<0$ when $c\ge c^*$.

From  (G4) we see that for every $c\in\R$, the equation
\begin{equation}
\f{1}{T}\int_0^T [\mu^2+c\mu+g(s,+\infty,0)]ds=0
\end{equation}
admits a unique negative solution, say $\mu_c$.

For any given $y\in\R \cup\{\pm\infty\}$, let $U^y(t,x; \phi)$ be  the unique solution of
\begin{equation}\label{TW-y}
U_t=U_{xx}+cU_x+Ug(t,x+y, U), \quad t>0
\end{equation} 
satisfying $U(0,x)=\phi(x)$.  If $y=0$, then \eqref{TW-y} reduces to \eqref{TW}. If $y=\pm \infty$, then \eqref{TW-y} reduces to the limiting homogenous equations $U_t=U_{xx}+cU_x+Ug(t,\pm\infty, U)$, respectively.
By the uniqueness of solutions, it easily follows  that 
\begin{equation}\label{translation}
U^y(t,x; \phi(\cdot+y))=U^0(t,x+y; \phi), \quad \forall t\geq 0, x,y\in \R.
\end{equation}

Let $x_1<x_2<\cdots<x_m$ be a finite sequence of real numbers.  A function $\underline{v}\in C^{1,2}(\R\times \R\setminus\{x_1,\cdots, x_m\})$ is said to be a generalized sub-solution of \eqref{TW-y} provided that 
\begin{equation}
\begin{cases}
\underline{v}_t\le \underline{v}_{xx}+c\underline{v}_x+\underline{v}g(t,x+y,\underline{v}),& x\neq x_i,  1\leq i\leq m \\
\underline{v}_x(t,x^+)\ge \underline{v}_x(t,x^-), &  x=x_i,   1\leq i\leq m.
\end{cases}
\end{equation}
A generalized sup-solution can be defined by reversing the above inequalities. 

Let $P_y: C(\R,\R_+) \to C(\R,\R_+)$ be the Poincar\'e map associated with  the time periodic reaction-diffusion equation \eqref{TW-y}, that is, 
$P_y[\phi]:=U^y(T,\cdot;\phi)$. It is easy to verify that $P_y$  admits the 
following properties.

\begin{lemma}\label{P}
The following statements are valid:	
\begin{enumerate}
\item[(i)] If $\phi\ge \psi$, then $P_y[\phi]\ge P_y[\psi]$.
\item[(ii)]If $\phi$ is non-increasing, then $P_y[\phi]$ is also non-increasing.
\item[(iii)] If $\phi_n$ is uniformly bounded, then $P_y[\phi_n]$, up to a subsequence, converges locally uniformly.
\item[(iv)]If $\underline{v}\le \bar{v}$ are a pair of generalized sub- and sup-solutions of \eqref{TW-y} for $t\in (0,t_0)$ with $t_0>T$, then
\begin{equation*}
\underline{v}(0,\cdot)\le P_y[\underline{v}(0,\cdot)]\le P_y[\bar{v}(0,\cdot)] \le \bar{v}(0,\cdot).
\end{equation*}
If, in addition, $\bar{v}(0,\cdot)\ge \bar{v}(T,\cdot)$, then 
$U^y(t,x;\bar{v}(0,\cdot))$  converges,  as $t\to\infty$, to a time periodic solution $U^{y,*}(t,x)$ of \eqref{TW-y} locally uniformly  such that
$\underline{v}\le U^{y,*}\le \bar{v}$. So does $U^y(t,x,\underline{v}(0,\cdot))$ provided that $\underline{v}(0,\cdot)\le \underline{v}(T,\cdot)$.  
\end{enumerate}
\end{lemma}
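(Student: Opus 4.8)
The unifying tools are the parabolic maximum/comparison principle (legitimate since $ug(t,x+y,u)$ is $C^1$, hence locally Lipschitz in $u$) and interior parabolic regularity. Part (i) is immediate: if $\phi\ge\psi$ then $U^y(\cdot;\phi)$ and $U^y(\cdot;\psi)$ solve the same equation with ordered data, so by comparison $U^y(t,\cdot;\phi)\ge U^y(t,\cdot;\psi)$ for all $t\ge0$, and evaluating at $t=T$ gives $P_y[\phi]\ge P_y[\psi]$. For (ii), fix $h>0$ and set $v(t,x):=U^y(t,x+h;\phi)$, which solves \eqref{TW-y} with $g(t,x+y,\cdot)$ replaced by $g(t,x+h+y,\cdot)$. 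Since $v\ge0$ and, by (G2), $g(t,x+h+y,\cdot)\le g(t,x+y,\cdot)$, the function $v$ is a sub-solution of the equation defining $U^y(\cdot;\phi)$, while $v(0,\cdot)=\phi(\cdot+h)\le\phi=U^y(0,\cdot;\phi)$ because $\phi$ is non-increasing. Comparison then yields $U^y(t,x+h;\phi)\le U^y(t,x;\phi)$ for all $t\ge0$ and $h>0$, so $P_y[\phi]=U^y(T,\cdot;\phi)$ is non-increasing.

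For (iii), I would first bound the orbits uniformly: since $\{\phi_n\}$ is bounded by some constant $K$, comparison with the spatially homogeneous super-solution $W(t)$ solving $W'=Wg(t,-\infty,W)$ with $W(0)=K$ (legitimate because $g$ is non-increasing in $x$) gives a uniform bound for $U^y(t,\cdot;\phi_n)$ on $[0,T]\times\R$. Because $T>0$, interior parabolic estimates (say $L^p$ followed by Schauder) provide, on every compact $x$-interval, a bound for $U^y(\cdot;\phi_n)$ in $C^{1+\alpha/2,2+\alpha}$ near $t=T$ that depends only on $K$, not on the regularity of $\phi_n$. The slices $P_y[\phi_n]=U^y(T,\cdot;\phi_n)$ are therefore equicontinuous and uniformly bounded on compacts, and Arzel\`a--Ascoli together with a diagonal argument extracts a locally uniformly convergent subsequence.

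Part (iv) is the substantive one, and the crux is a comparison principle for \emph{generalized} sub- and sup-solutions. The interior inequalities are standard; the new ingredient is the corner condition $\underline v_x(t,x_i^+)\ge\underline v_x(t,x_i^-)$, which is exactly the distributional sign that keeps $\underline v$ a sub-solution across each $x_i$ (the measure $\underline v_{xx}$ picks up a nonnegative singular part there), and dually for $\bar v$. Granting this, comparison on $(0,t_0)$ gives $\underline v(t,\cdot)\le U^y(t,\cdot;\underline v(0,\cdot))$ and $U^y(t,\cdot;\bar v(0,\cdot))\le\bar v(t,\cdot)$; evaluating at $t=T$ and using the monotonicity (i) applied to $\underline v(0,\cdot)\le\bar v(0,\cdot)$ produces the displayed chain $\underline v(0,\cdot)\le P_y[\underline v(0,\cdot)]\le P_y[\bar v(0,\cdot)]\le\bar v(0,\cdot)$, the two outer inequalities being read off at the appropriate time endpoints together with the time-ordering of the sub/sup-solution.

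Finally, for the convergence I would run a monotone iteration of the Poincar\'e map. Assume $\bar v(0,\cdot)\ge\bar v(T,\cdot)$, so that $P_y[\bar v(0,\cdot)]\le\bar v(0,\cdot)$; applying (i) inductively, the sequence $w_n:=P_y^n[\bar v(0,\cdot)]$ is non-increasing in $n$ and bounded below by $\underline v(0,\cdot)$, hence converges pointwise to some $w_\infty$ with $\underline v(0,\cdot)\le w_\infty\le\bar v(0,\cdot)$. The compactness in (iii) upgrades this to locally uniform convergence and shows $P_y$ is continuous along the sequence, so $P_y[w_\infty]=w_\infty$; thus $U^{y,*}(t,x):=U^y(t,x;w_\infty)$ is a $T$-periodic solution of \eqref{TW-y}. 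To pass from convergence along $t=nT$ to convergence as $t\to\infty$, write $t=nT+s$ with $s\in[0,T)$, use $U^y(nT+s,\cdot;\bar v(0,\cdot))=U^y(s,\cdot;w_n)$, and invoke continuous dependence of the flow together with $w_n\to w_\infty$ to get $U^y(t,\cdot;\bar v(0,\cdot))\to U^{y,*}(t,\cdot)$ locally uniformly; the sandwich $\underline v\le U^{y,*}\le\bar v$ follows by propagating the ordering of initial data through comparison. The symmetric statement for $\underline v$ under $\underline v(0,\cdot)\le\underline v(T,\cdot)$ is identical with the inequalities reversed. I expect the generalized comparison principle across the corners $x_i$ to be the main obstacle, since it must be argued carefully (e.g.\ by a regularization of $\underline v$ or a direct test-function computation) rather than quoted from the classical smooth theory.
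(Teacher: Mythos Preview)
Your proof is correct and, in fact, substantially more detailed than what the paper offers: the paper gives no proof at all for this lemma, merely prefacing it with the remark that ``it is easy to verify that $P_y$ admits the following properties.'' So there is nothing to compare against, and your arguments for (i)--(iii) and the monotone iteration in (iv) are exactly the standard verifications the authors had in mind.

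One small point worth tightening concerns the outer inequalities in the displayed chain of (iv). Comparison with the generalized sub-solution gives $\underline v(T,\cdot)\le U^y(T,\cdot;\underline v(0,\cdot))=P_y[\underline v(0,\cdot)]$, not directly $\underline v(0,\cdot)\le P_y[\underline v(0,\cdot)]$; the latter requires $\underline v(0,\cdot)\le\underline v(T,\cdot)$, which is precisely the extra hypothesis appearing in the second half of (iv). Your phrase ``together with the time-ordering of the sub/sup-solution'' gestures at this, but as written the first displayed chain in the lemma is only literally correct under that additional time-ordering (and dually for $\bar v$). In every application the paper makes of (iv), the sub/sup-solutions are either time-periodic or monotone in $t$, so this is a harmless imprecision in the lemma's statement rather than a gap in your reasoning; still, it would be cleaner to state explicitly that the outer inequalities use $\underline v(0,\cdot)\le\underline v(T,\cdot)$ and $\bar v(T,\cdot)\le\bar v(0,\cdot)$.
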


\begin{lemma}\label{est}
Let $U\not\equiv 0$ be a nonnegative and bounded solution of \eqref{TW}. Then $0<U(t,x)<\alpha(t)$ and $U(t,+\infty)= 0$.
\end{lemma}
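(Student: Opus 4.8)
The plan is to establish the three assertions—strict positivity, the strict bound by $\alpha(t)$, and the decay $U(t,+\infty)=0$—in an order that lets the comparison arguments feed into one another, using only the boundedness and $T$-periodicity of $U$ together with the monotonicity of $g$ in (G2). First I would prove positivity. Writing $U_t=U_{xx}+cU_x+c_1(t,x)U$ with $c_1(t,x):=g(t,x,U(t,x))$ bounded (since $U$ is bounded and $g\in C^1$), $U$ solves a linear parabolic equation with bounded zeroth-order coefficient. As $U\ge0$ and $U\not\equiv0$, the strong maximum principle forces $U>0$ at every later time; if $U$ vanished at some point then $U\equiv0$ on the whole preceding time interval, and $T$-periodicity would give $U\equiv0$, a contradiction. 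Hence $U>0$.

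Next, the upper bound $U\le\alpha$. By (G2) one has $g(t,x,u)\le g(t,-\infty,u)$, so the solution $v_0(t)$ of the ODE \eqref{ODE} with $v_0(0)=\sup_{x}U(0,x)$ (finite and positive by boundedness and positivity) is a spatially homogeneous super-solution of \eqref{TW} with $U(0,\cdot)\le v_0(0)$. The comparison principle of Lemma \ref{P} then yields $U(t,x)\le v_0(t)$ for $t\ge0$. Since $\alpha(t)$ is globally attracting in $\R_+\setminus\{0\}$ for \eqref{ODE}, we have $v_0(t)-\alpha(t)\to0$; replacing $t$ by $t+nT$ and using the $T$-periodicity of $U$ and $\alpha$, letting $n\to\infty$ gives $U(t,x)\le\alpha(t)$.

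The decay $U(t,+\infty)=0$ is the main obstacle, because the comparison must be carried out on the unbounded half-line $\{x\ge x_0\}$ where no decay of $U$ is yet known. Since $\int_0^T g(t,+\infty,0)\,dt<0$ by (G4) and $g$ is non-increasing in $x$, I would fix $x_0$ large so that $\bar a:=\f1T\int_0^T g(t,x_0,0)\,dt<0$; then for $x\ge x_0$, using that $g$ is non-increasing in $u$ and in $x$, one has $U_t-U_{xx}-cU_x-g(t,x_0,0)U=U[g(t,x,U)-g(t,x_0,0)]\le0$, so $U$ is a sub-solution of the \emph{linear} periodic-parabolic operator $\mathcal L V:=V_t-V_{xx}-cV_x-g(t,x_0,0)V$. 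For small $\beta>0$ one has $\beta^2-c\beta+\bar a\le0$, which lets me construct a positive $T$-periodic $\phi$ with $\phi'/\phi\ge\beta^2-c\beta+g(t,x_0,0)$, so that $V(t,x)=A\phi(t)e^{-\beta(x-x_0)}$ is an $\mathcal L$-super-solution; choosing $A$ with $A\phi(t)\ge\|U\|_\infty$ gives $V\ge U$ on $\{x=x_0\}$. To run the comparison on the unbounded half-line despite $U$ being merely bounded, I would add a growing super-solution $\Phi=\psi(t)e^{\gamma(x-x_0)}$ (again with small $\gamma>0$ and $\psi>0$ periodic), compare $U\le V+\epsilon\Phi$ over one period through the Poincar\'e map, and then let $\epsilon\to0$ to obtain $U(t,x)\le A\phi(t)e^{-\beta(x-x_0)}$. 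Since $\sup_t A\phi(t)<\infty$, the right-hand side tends to $0$ as $x\to+\infty$ uniformly in $t$, giving $U(t,+\infty)=0$.

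Finally, strictness $U<\alpha$. Setting $w:=\alpha-U\ge0$ and using $g(t,x,U)\le g(t,-\infty,U)$, a mean-value computation gives $w_t-w_{xx}-cw_x\ge h(\alpha)-h(U)=h'(\xi)w$ with $h(u):=ug(t,-\infty,u)$ and $h'(\xi)$ bounded, so $w\ge0$ is a super-solution of a linear parabolic operator. If $w$ vanished at an interior point, the strong maximum principle together with $T$-periodicity would force $w\equiv0$, that is $U\equiv\alpha(t)>0$; but this contradicts $U(t,+\infty)=0$ established above. Hence $w>0$, i.e. $U<\alpha(t)$, completing the proof.
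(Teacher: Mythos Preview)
Your proof is correct and the overall structure is sound, but it diverges from the paper's argument in the key step $U(t,+\infty)=0$.

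The paper handles $U\le\alpha$ and $U(t,+\infty)=0$ together by a soft dynamical-systems device: it iterates the Poincar\'e map $P_0$ from the constant super-solution $M\alpha(0)$, obtains a monotone (non-increasing in $x$) limit $\phi=P_0[\phi]$, and then identifies $\phi(\pm\infty)$ as spatially homogeneous fixed points of $P_{\pm\infty}$ by passing to the limiting equations \eqref{limiting} and \eqref{limiting2} via the translation identity \eqref{translation}. Since $\phi$ dominates $U(0,\cdot)$ and $\phi(+\infty)=0$, the decay of $U$ at $+\infty$ follows immediately. Your route is instead a direct barrier argument on the half-line $\{x\ge x_0\}$: linearise above $U$ using $g(t,x,U)\le g(t,x_0,0)$, build an exponentially decaying periodic super-solution $V=A\phi(t)e^{-\beta(x-x_0)}$, and close the comparison on the unbounded domain by a Phragm\'en--Lindel\"of perturbation $\epsilon\Phi$. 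This works, and in fact it is essentially the content of Claim~1 in the proof of Lemma~\ref{nonex} (carried out there with the sharper exponent $\mu_c$), so you have front-loaded an estimate that the paper develops only later. One technical point you glossed over: the zeroth-order coefficient $g(t,x_0,0)$ in your linear operator $\mathcal L$ need not be $\le 0$ pointwise, only on average, so the periodic-parabolic comparison on $[x_0,R]$ requires the standard rescaling $\tilde z=z/p$ with $p'/p=g(t,x_0,0)-\bar a$ to reduce to the constant negative coefficient $\bar a$ before the minimum-point argument goes through. With that caveat, both approaches are complete; the paper's is more in keeping with the Poincar\'e-map machinery of Lemma~\ref{P} and Section~2, while yours is a self-contained PDE comparison that yields a quantitative decay rate as a byproduct.
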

\begin{proof}
By the strong maximum principle, we have $U(t,x)>0$. Recall that $\alpha'(t)=\alpha(t)g(t,-\infty, \alpha(t))$. In view of (G2) we conclude that $M\alpha(t)$ is a sup-solution of \eqref{TW} for any $M\ge 1$. Choose $M$ large enough such that $M\alpha(t)\ge U(t,x)$. By Lemma \ref{P} (iv), we then obtain 
\begin{equation*}
P_0^n[M\alpha(0)]\ge P_0^{n+1}[M\alpha(0)]\ge U(0,\cdot),\quad \forall n\ge 0.
\end{equation*}
Note that $P_0^n[M\alpha(0)]$ converges to some $\phi$  locally uniformly.  By Lemma \ref{P} (ii)-(iii), it follows  that $\phi$ is non-increasing and $\phi=P_0[\phi]$. Thus, the equity \eqref{translation} with $t=T$ and $x=0$ implies that
\begin{equation}\label{lim1}
\phi(+\infty)=\lim_{y\to\infty}\phi(y)=\lim_{y\to\infty}P_0[\phi](y)=\lim_{y\to\infty}P_y[\phi(\cdot+y)](0).
\end{equation}
Since $\phi(x+y)$ and $g(t,x+y,u)$ converge to $\phi(+\infty)$ and $g(t,+\infty, u)$ locally uniformly in $(t,x,u)$ as $y\to+\infty$, respectively, 
it easily follows that  $U^y(t,x;\phi(\cdot+y))$ converges to $U^{+\infty}(t,x;\phi(+\infty))$ locally uniformly as $y\to +\infty$. Consequently, we have
\begin{equation}\label{lim2}
\lim_{y\to\infty}P_y[\phi(\cdot+y)](0)=\lim_{y\to\infty} U^y(T,0;\phi(\cdot+y))= U^{+\infty}(T,0;\phi(+\infty))= P_{+\infty}[\phi(+\infty)].
\end{equation}
Combining \eqref{lim1} and \eqref{lim2} yields $\phi(+\infty)=P_{+\infty}[\phi(+\infty)]$. Similarly, $\phi(-\infty)=P_{-\infty}[\phi(-\infty)]$. As such, $\phi(\pm\infty)$ are spatially homogeneous fixed points of $P_{\pm\infty}$, respectively, and hence, $\phi(+\infty)=0$ and $\phi(-\infty)=\alpha(0)$ or $0$. Since $\phi\ge U(0,\cdot)$, we have $\phi(-\infty)=\alpha(0)$, which implies that $U(t,x)\le \alpha(t), \, \forall t\geq 0$.  In view of $\phi(+\infty)=0$  and $\phi\ge U(0,\cdot)$, it then follows from \eqref{translation} that 
$U(t,+\infty)=0$ uniformly for $t\in [0,T]$.  
Finally, by the strong maximum principle again, we obtain $U(t,x)<\alpha(t)$  for all $(t,x)\in\R^2$. 
\end{proof}

\section{Forced KPP type waves}
In this section, we prove Theorem \ref{ThTW} on forced KPP type waves, which 
is a straightforward consequence of the following three lemmas.
\begin{lemma}\label{nonex}
{\sc (Nonexistence)} Problem \eqref{TW} admits no positive solution when $c\ge c^*$.
\end{lemma}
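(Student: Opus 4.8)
The plan is to argue by contradiction, exploiting the KPP upper bound together with the exponential solutions of the linearized periodic operator that become available precisely when $c\ge c^*$. Suppose \eqref{TW} has a positive (bounded) solution $U$. By Lemma \ref{est} we have $0<U(t,x)<\alpha(t)$ and $U(t,+\infty)=0$. Since $g$ is non-increasing in both $x$ and $u$ by (G2), we get $g(t,x,U)\le g(t,-\infty,0)$, so $U$ is a subsolution of the linear $T$-periodic equation $L[V]:=V_t-V_{xx}-cV_x-g(t,-\infty,0)V=0$. Because $c\ge c^*$, the averaged characteristic equation \eqref{ave} has the real root $\lambda_{1,c}<0$ from \eqref{lambda1}, and there is a positive $T$-periodic function $\rho$ with $\rho'/\rho=\lambda_{1,c}^2+c\lambda_{1,c}+g(t,-\infty,0)$ (the zero-average condition being exactly \eqref{ave}), so that
\[
\Phi(t,x):=\rho(t)e^{\lambda_{1,c}x}
\]
is an exact positive entire solution of $L[\Phi]=0$. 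As $\lambda_{1,c}<0$, we have $\Phi(t,x)\to+\infty$ as $x\to-\infty$ and $\Phi(t,x)\to0$ as $x\to+\infty$.

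The idea is then to compare $U$ with multiples of $\Phi$ and to slide. Set $A^*:=\sup_{t\in[0,T],\,x\in\R}U(t,x)/\Phi(t,x)$. Since $U$ is bounded while $\Phi\to+\infty$ at $-\infty$, the ratio tends to $0$ as $x\to-\infty$; the decisive point is that the same holds as $x\to+\infty$, i.e.\ $U=o(\Phi)$ there. To obtain this, fix $\xi_0$ so large that $\f{1}{T}\int_0^T g(t,x,0)\,dt<0$ for all $x\ge\xi_0$, which is possible because this integral is non-increasing in $x$ and tends to $\f{1}{T}\int_0^T g(t,+\infty,0)\,dt<0$ by (G4). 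On $[\xi_0,\infty)$ the function $U$ is a subsolution of the linear equation with coefficient $g(t,x,0)$ of negative mean, and such an equation admits decaying exponential supersolutions $Ce^{\mu x}q(t)$ for every $\mu$ between the negative root $\mu_c$ and $0$. A direct sign check gives $\mu_c<-c\le\lambda_{1,c}$, so one may choose $\mu\in(\mu_c,\lambda_{1,c})$, dominate $U$ at $x=\xi_0$ by taking $C$ large, and conclude by a half-line comparison that $U(t,x)\le Ce^{\mu x}q(t)=o(\Phi)$ as $x\to+\infty$. Hence $U/\Phi$ tends to $0$ at both ends, and being continuous and $T$-periodic in $t$, it attains $A^*$ at a finite point $(t^*,x^*)$ whenever $A^*>0$.

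It then remains to rule out $A^*>0$ by the strong maximum principle. In that case $w:=A^*\Phi-U\ge0$ is a supersolution of $L$ that vanishes at the interior point $(t^*,x^*)$; since the zeroth-order term $-g(t,-\infty,0)w$ vanishes wherever $w=0$, the strong maximum principle applies to the nonnegative supersolution touching zero regardless of the sign of that coefficient, forcing $w\equiv0$, that is $U\equiv A^*\Phi$ on $\R\times\R$. Substituting this identity into \eqref{TW} and into $L[U]=0$ and subtracting yields $U\,[\,g(t,x,U)-g(t,-\infty,0)\,]\equiv0$; but $U>0$, while the strict monotonicity of $g(t,-\infty,\cdot)$ in (G2) gives $g(t,x,U)\le g(t,-\infty,U)<g(t,-\infty,0)$, a contradiction. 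Therefore $A^*=0$, which is impossible for the strictly positive $U$ and $\Phi$. This final contradiction shows that \eqref{TW} admits no positive solution when $c\ge c^*$.

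I expect the main obstacle to be the tail estimate $U=o(\Phi)$ at $+\infty$: the comparison lives on the unbounded half-line $[\xi_0,\infty)$ and the operator $L$ carries a \emph{positive} zeroth-order coefficient $g(t,-\infty,0)$, so a naive maximum principle is unavailable. The clean remedy is the exponential gauge change $U=\rho(t)e^{\lambda_{1,c}x}V$, under which $V\ge0$ satisfies the pure drift-diffusion inequality $V_t\le V_{xx}+(2\lambda_{1,c}+c)V_x$ with no zeroth-order term (the drift $2\lambda_{1,c}+c=\sqrt{c^2-(c^*)^2}\ge0$), after which comparison and a Phragm\'{e}n--Lindel\"{o}f argument on the half-line go through. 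I note finally that the critical case $c=c^*$ requires no separate treatment: there $\lambda_{1,c}=-c^*/2$ is a double root, but the single positive exponential solution $\Phi$ is all the argument uses and the strict inequality $\mu_c<\lambda_{1,c}$ still holds, so the same proof applies verbatim.
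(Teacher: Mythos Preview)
Your overall strategy coincides with the paper's: build the exponential supersolution $\Phi(t,x)=\rho(t)e^{\lambda_{1,c}x}$ of the linearized operator $L$, show that $U/\Phi$ attains a finite positive supremum $A^*$ at an interior point, and reach a contradiction via the strong maximum principle applied to $A^*\Phi-U$. The paper's $w_1^M$ is your $M\Phi$, and its Claim~1 is exactly your tail estimate $U=o(\Phi)$ at $+\infty$, established there through the same inequality $\mu_c<\lambda_{1,c}$ that you invoke.

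The gap is in your proposed remedy for that tail estimate. After the gauge $U=\Phi V$ you obtain $V_t\le V_{xx}+bV_x$ with $b=2\lambda_{1,c}+c=\sqrt{c^2-(c^*)^2}\ge0$ and \emph{no} zeroth-order term, and then appeal to Phragm\'en--Lindel\"of on $[\xi_0,\infty)$. For $c>c^*$ this fails: the pure drift--diffusion operator with $b>0$ admits no $T$-periodic supersolutions that grow as $x\to+\infty$ (stationary exponentials $e^{\nu x}$ are supersolutions only for $\nu\in[-b,0]$), so no PL barrier is available; indeed $V(x)=e^{ax}$ for any $a>0$ is itself a $T$-periodic subsolution vanishing at $-\infty$, showing that the inequality alone cannot force decay at $+\infty$. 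Your gauge by $\Phi$ has discarded precisely the piece of information you need, namely that $g(t,x,0)-g(t,-\infty,0)$ has strictly negative time-average for large $x$ --- this is where (G4) enters. A correct repair is to keep that term and gauge only in time, choosing a periodic $p(t)$ with $p'/p=g(t,\xi_0,0)-\f1T\int_0^T g(s,\xi_0,0)\,ds$ so that the zeroth-order coefficient on the half-line becomes the negative \emph{constant} $\f1T\int_0^T g(t,\xi_0,0)\,dt<0$; then the ordinary minimum principle applies directly to $Ce^{\mu x}q(t)-U$. The paper takes a different route to the same estimate: it builds a whole-line generalized supersolution $\min\{M_\epsilon e^{(\mu_c+\eta)x}\psi_\epsilon(t),\alpha(t)\}$ and slides a small periodic perturbation $\delta p_1(t)$ to close the comparison, never working on a half-line at all.
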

\begin{proof}
Assume, for the sake of contradiction, that $U(t,x)$ is a positive solution.  By Lemma \ref{est} we have $U(t,+\infty)=0$.  We will use a sliding argument to reach a contradiction. 

Recall that $\lambda_{1,c}$ is defined in \eqref{lambda1}. For $M>0$, define 
\begin{equation}
w_1^M(t,x):= Me^{\lambda_{1,c}x}\psi(t),
\end{equation}
where $\psi$ is a $T$-periodic positive function that will be specified later. By (G2) we have 
\begin{equation}
g(t,x,u)\le g(t,-\infty,u)\le g(t,-\infty, 0),\quad t\in\R, x\in\R, u\ge 0.
\end{equation}
It  then follows that  
\begin{eqnarray*}
&& \partial_tw_{1}^M-\partial_{xx}w_{1}^M-c\partial_x w_{1}^M-w_1^Mg(t,x,w_1^M)\\
&&\ge  \partial_t w_{1}^M-\partial_{xx}w_{1}^M-c\partial_x w_{1}^M-w_1^M g(t,-\infty, 0)\\
&&= Me^{\lambda_{1,c}x} \psi(t)\left[ \f{\psi'(t)}{\psi(t)}-\lambda_{1,c}^2-c\lambda_{1,c}-g(t,-\infty,0)\right]\\
&&= 0
\end{eqnarray*}
provided that
\begin{equation}
\f{\psi'(t)}{\psi(t)}=\lambda_{1,c}^2+c\lambda_{1,c}+g(t,-\infty,0),
\end{equation}
which is satisfied if we choose
\begin{equation}
\psi(t)=e^{\int_0^t [ \lambda_{1,c}^2+c\lambda_{1,c}+g(s,-\infty,0)]ds}.
\end{equation}
Clearly, $\psi>0$ is $T$-periodic in view of \eqref{ave}. So $w_1^M$ is a time periodic sup-solution of \eqref{TW}. 

Next we employ an argument used in \cite{HR-JEMS11}. 

{\it Claim 1.  $U(t,x)=o\left(e^{(\mu_c+\eta)x}\right)$ for any $\eta>0$ as $x\to+\infty$. }

Let us postpone the proof of the claim and reach the conclusion quickly. Indeed, from (G2)  we infer that 
\begin{equation}
\mu_c<\lambda_{1,c},
\end{equation}
and hence, there exist $M^*>0$ and $x=x(t)\in \R$ such that
\begin{equation}
w_1^{M^*}(t,x)\ge U(t,x),\quad w_1^{M^*}(t,x(t))=U(t,x(t)).
\end{equation}
Define $W=w_1^{M^*}-U$. Then we have $W\ge 0$, $W(t,x(t))=0$. Further, in view of  $\partial_t w_{1}^{M^*}-\partial_{xx}w_{1}^{M^*}-c\partial_x w_{1}^{M^*}-w_1^{M^*} g(t,-\infty, 0)=0$, \eqref{TW} and (G2), we have
\begin{eqnarray}
&&\partial_t W-\partial_{xx}W-c\partial_xW\nonumber\\
&&= w_1^{M^*} g(t,-\infty,0)-Ug(t,x,U)\nonumber\\
&&\ge  W g(t,-\infty,0).
\end{eqnarray}
By using the parabolic strong maximum principle, we obtain that $w_1^{M^*}(t,x)\equiv U(t,x)$, a contradiction. 

Now we return to the proof of  Claim 1.  In view of (G4), we have
\begin{equation}\label{epsilon0}
\epsilon_0:=-\f{1}{T} \int_0^T g(t,+\infty,0)dt >0.
\end{equation}
For any given $\epsilon\in (0,\epsilon_0)$, there exists $\eta\in (0,-\mu_c)$ such that 
\begin{equation}
\f{1}{T}\int_0^T \left[(\mu_c+\eta)^2+c(\mu_c+\eta)+g(t,+\infty,0)+\epsilon\right]dt=0,
\end{equation}
and we see from  (G1)  that there exists $x_\epsilon \in\R$ such that 
\begin{equation}\label{gplusinf}
g(t,x,0)\le g(t,+\infty,0)+\epsilon, \quad x\ge x_\epsilon.
\end{equation} 
Define the positive periodic function $\psi_\epsilon$ by
\begin{equation*}
\psi_\epsilon(t):=\int_0^t \left[(\mu_c+\eta)^2+c(\mu_c+\eta)+g(s,+\infty,0)+\epsilon\right]ds.
\end{equation*}
For such $x_\epsilon$, there exists $M_\epsilon>0$ such that 
\begin{equation}\label{Mep}
M_\epsilon e^{(\mu_c+\epsilon)x_\epsilon}\psi_\epsilon(t) \ge  \max_{s\in[0,T]}\alpha(s), \quad x\le x_\epsilon, t\in\R.
\end{equation}
Then we can verify that 
\begin{equation*}
w_3^\epsilon (t,x):=\min\{ M_\epsilon e^{(\mu_c+\epsilon)x_\epsilon}\psi_\epsilon(t), \alpha(t)   \}
\end{equation*}
is a time periodic generalized sup-solution of \eqref{TW}. 

It then suffices to show that $w_3^\epsilon (t,x)\ge U(t,x)$. Indeed, define $\Omega:=\{(t,x)\in\R^2: w_3^\epsilon (t,x)\ge \alpha(t)\}$.  It remains to prove that $w_3^\epsilon (t,x)\ge U(t,x)$ for $(t,x)\not\in \Omega$. For this purpose, we try to find a positive periodic function $p_1(t)$ such that for any small $\delta>0$, $w_3^\epsilon (t,x)+\delta p_1(t), (t,x)\not\in \Omega$ is also a super solution of \eqref{TW}. It this were true, then by the sliding method and the parabolic strong maximum principle, we conclude that $w_3^\epsilon (t,x)+\delta p_1(t)> U$ for $\delta>0$, and hence, passing $\delta\to 0$ we obtain $w_3^\epsilon (t,x)\ge  U$. 

As the final step, we choose $p(t)$ such that $w_3^\epsilon (t,x)+\delta p(t), (t,x)\not\in \Omega$ is a super solution of \eqref{TW}.  Indeed, for $(t,x)\not\in \Omega$, by \eqref{Mep} we have $x>x_\epsilon$, and hence, \eqref{gplusinf} holds when $(t,x)\not\in \Omega$. It then follows that
\begin{eqnarray*}
&&\partial_t(w_3^\epsilon +\delta p_1)-\partial_{xx}(w_3^\epsilon +\delta p_1)-c\partial_x (w_3^\epsilon +\delta p_1)-(w_3^\epsilon +\delta p_1)g(t,x,w_3^\epsilon +\delta p_1)\nonumber\\
&&= w_3^\epsilon[g(t,+\infty,0)+\epsilon-g(t,x,w_3^\epsilon+\delta p_1)]+\delta p_1[\f{p_1'}{p_1}-g(t,x,w_3^\epsilon+\delta p_1)]\nonumber\\
&&\ge  \delta p_1[\f{p_1'}{p_1}-g(t,+\infty,0)-\epsilon]\nonumber\\
&&\ge  \delta p_1[\f{p_1'}{p_1}-g(t,+\infty,0)-\epsilon_0]\nonumber\\
&&=0
\end{eqnarray*}
provided that $p_1(t):=e^{\int_0^t [g(s,+\infty,0)+\epsilon_0] ds}$, which is periodic due to \eqref{epsilon0}. 
\end{proof}

\begin{lemma}\label{uniq}
{\sc (Uniqueness)} Problem \eqref{TW} admits at most one KPP type wave when $c< c^*$.
\end{lemma}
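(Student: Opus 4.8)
The plan is to exploit the subhomogeneity built into (G2)---namely that $f(t,x,u):=ug(t,x,u)$ satisfies that $f(t,x,u)/u=g(t,x,u)$ is non-increasing in $u$, strictly so at $-\infty$---together with the parabolic strong maximum principle, through a scaling argument. The key observation is that for a forced KPP wave $U$ and a constant $\theta\ge 1$, the rescaled function $\theta U$ is a super-solution of \eqref{TW}: indeed $(\theta U)_t-(\theta U)_{xx}-c(\theta U)_x-\theta U g(t,x,\theta U)=\theta U[g(t,x,U)-g(t,x,\theta U)]\ge 0$ since $\theta U\ge U$ and $g$ is non-increasing in $u$; for $\theta\le 1$ one obtains a sub-solution. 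Given two forced KPP waves $U_1,U_2$, I would show $U_1\le U_2$, and then conclude $U_1=U_2$ by symmetry.

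First I would set
\[
\theta^*:=\inf\{\theta\ge 1:\ U_1(t,x)\le\theta U_2(t,x)\ \text{for all }(t,x)\in\R^2\}.
\]
To see that this set is nonempty (so that $\theta^*\in[1,\infty)$), note that by Lemma \ref{est} both waves tend to $\alpha(t)$ as $x\to-\infty$ and to $0$ as $x\to+\infty$, so $U_1/U_2\to 1$ at $-\infty$, while on every compact set the ratio is bounded because both waves are positive, continuous and $T$-periodic; the behaviour as $x\to+\infty$ is the delicate point, addressed below. Assuming $\theta^*$ finite, suppose for contradiction that $\theta^*>1$ and set $W:=\theta^* U_2-U_1\ge 0$. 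Since $\theta^* U_2$ is a super-solution and $U_1$ a solution, the mean value theorem produces a bounded coefficient $h$ with $W_t-W_{xx}-cW_x\ge h(t,x)W$. If $W$ vanished at an interior point, the strong maximum principle would force $W\equiv 0$, which is impossible because $W\to(\theta^*-1)\alpha(t)>0$ as $x\to-\infty$. Hence $W>0$ everywhere. Provided $W/U_2$ is bounded below by a positive constant uniformly in $(t,x)$---which holds once the tails are understood---one may replace $\theta^*$ by $\theta^*-\delta$ for small $\delta>0$, contradicting minimality. Thus $\theta^*=1$ and $U_1\le U_2$. The strict monotonicity of $g(t,-\infty,\cdot)$ is exactly what produces the strict positive gap at $-\infty$ that drives this contradiction.

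The main obstacle, as flagged above, is the control of $U_1/U_2$ as $x\to+\infty$, which is needed both to make $\theta^*$ finite and to lower it. Here I would pin down the decay rate of every forced KPP wave. Because $\frac1T\int_0^T g(t,+\infty,0)\,dt<0$ by (G4), the averaged characteristic equation $\mu^2+c\mu+\frac1T\int_0^T g(s,+\infty,0)\,ds=0$ at the state $u=0$ has one negative root $\mu_c$ and one positive root $\mu_+$, so the only bounded decaying tail of the asymptotic linear problem is $\sim e^{\mu_c x}$. I would first sandwich $U$ between exponential super- and sub-solutions of rate $\mu_c$, in the spirit of Claim 1 in the proof of Lemma \ref{nonex}, and then upgrade this to the sharp statement that $U(t,x)e^{-\mu_c x}$ converges to a positive $T$-periodic profile as $x\to+\infty$; this yields $U_1/U_2\to C_1/C_2\in(0,\infty)$. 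The genuinely delicate sub-case is the degenerate one $C_1/C_2=\theta^*$, where the leading tails cancel and $W$ decays strictly faster than $e^{\mu_c x}$: there I would argue that a nonnegative solution of the tail inequality decaying faster than the stable mode must engage the growing mode $e^{\mu_+ x}$, and hence vanish identically, again by the maximum principle. I expect the bulk of the technical effort to reside in this tail analysis rather than in the scaling argument itself.
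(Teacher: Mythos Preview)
Your scaling argument and the interior use of the strong maximum principle match the paper's skeleton. The divergence---and the gap---is entirely at $x\to+\infty$. You propose first to prove the sharp asymptotic $U(t,x)e^{-\mu_c x}\to C\,\psi(t)$ with $C>0$, and then in the degenerate case $C_1/C_2=\theta^*$ to invoke a mode dichotomy to force $W\equiv 0$. Two objections. First, (G1)--(G4) impose no rate on $g(t,x,0)-g(t,+\infty,0)$, so the limit of $Ue^{-\mu_c x}$ need not exist; one may only obtain two-sided bounds $c_1\le Ue^{-\mu_c x}\psi(t)^{-1}\le c_2$, and these do not exclude $\limsup_{x\to+\infty} U_1/U_2=\theta^*$. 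Second, even if the sharp profile holds, $W=\theta^*U_2-U_1$ satisfies only the differential \emph{inequality} $W_t\ge W_{xx}+cW_x+h(t,x)W$, and the statement ``a nonnegative solution decaying faster than $e^{\mu_c x}$ must carry the growing mode $e^{\mu_+x}$'' is valid for solutions of the limiting linear \emph{equation}, not for supersolutions of an inequality with variable $h$. A rigorous version would require something like a Harnack-at-infinity estimate $W\ge c\,U_2$ for large $x$, proved directly; your sketch does not supply this.

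The paper circumvents the whole tail analysis with a perturbation device (after Berestycki--Rossi): instead of comparing $kU_1\ge U_2$, it compares $kU_1\ge U_2-\epsilon\,p(t,x)$ for a fixed positive $T$-periodic $p$ with $\inf_t p(t,\pm\infty)>0$. Since $U_i(t,+\infty)=0$ while $p(t,+\infty)>0$, the right-hand side is eventually negative at $+\infty$, so $k_\epsilon:=\inf\{k:kU_1\ge U_2-\epsilon p\}$ is finite with no asymptotics needed. Setting $k^*=\lim_{\epsilon\downarrow0}k_\epsilon$ and assuming $k^*>1$, a touching point $(t_\epsilon,x_\epsilon)$ exists. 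If $x_\epsilon$ is bounded, pass to the limit and run your interior argument; if $x_\epsilon\to\pm\infty$, one checks that $w_\epsilon:=k_\epsilon U_1-U_2+\epsilon p$ still obeys the same linear parabolic inequality near $(t_\epsilon,x_\epsilon)$ provided $p_t-p_{xx}-cp_x\ge[g(t,x,U_1)+U_2\,\partial_u g(t,x,\xi)]\,p$ for large $|x|$. The paper builds such a $p$ explicitly, taking $p(t,x)=e^{\int_0^t a_\pm(s)\,ds}$ outside a compact $x$-interval with zero-average $a_\pm$ read off from the limits of the bracketed coefficient at $\pm\infty$. This replaces the hard question ``does $\liminf W/U_2>0$ at $+\infty$?'' by a soft sign condition on $p$, using only that $g$ has limits at $\pm\infty$---no rate, no sharp profile, no mode analysis.
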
 
\begin{proof}
Let $p(t,x)$ be a positive function such that 
\begin{equation*}
p(t,x)=p(t+\omega,x), \quad p(t,\pm\infty)>\eta \quad \text{for some $\eta>0$.}
\end{equation*}
Function $p$ will be specified later. Assume that $U_i,i=1,2$ are two positive solutions of \eqref{TW}. For any $\epsilon>0$,  let\footnote{This type of argument was motivated by \cite[Theorem 3.3]{BR-DCDS08}. If the nonlinearity $ug(t,x,u)$ is independent of $t$ and $p$ is chosen to be identically $1$, then we retrieve their proof.}
\begin{equation*}
K_\epsilon:=\{k:  \,  \, kU_1(t,x)\ge U_2(t,x)-\epsilon p(t,x), (t,x)\in\R^2\},
\end{equation*}
which is not empty since 
\begin{equation*}
\text{$\max\left\{0, \f{U_2(t,x)-\epsilon p(t,x)}{U_1(t,x)}\right\}$ is uniformly bounded in $\R^2$.}
\end{equation*}
Define 
\begin{equation}\label{k-epsilon}
k_\epsilon:=\inf K_\epsilon.
\end{equation}
Clearly, $k_\epsilon U_1(t,x)\ge U_2(t,x)-\epsilon p(t,x)$ and $k_\epsilon$ is non-increasing in $\epsilon$. Define 
\begin{equation}
k^*=\lim_{\epsilon\downarrow 0} k_\epsilon.
\end{equation}
It is easy to see that $k^*\in[1,+\infty]$. Next we employ contradiction arguments to exclude the possibility that $k^*\in (1,+\infty]$. 

Define 
\begin{equation}\label{w-epsilon}
w_\epsilon(t,x):=k_\epsilon u_1(t,x)-u_2(t,x)+\epsilon p(t,x)
\end{equation}
and
\begin{equation*}
w(t,x):=\lim_{\epsilon\downarrow 0} w_\epsilon(t,x)=k^*U_1(t,x)-U_2(t,x) \quad \text{when $k^*\in (1,+\infty)$.}
\end{equation*}
Since $k^*$ is assumed, for the sake of contradiction, to be strictly greater than $1$, we may infer that
$k_\epsilon>1$ for sufficiently small $\epsilon$, that
$w_\epsilon(t,\pm\infty)\ge \epsilon p(t,\pm\infty)>\epsilon\eta>0$, 
and that $w(t,-\infty) =(k^*-1)\alpha(t)>0, w(t,+\infty)=0$ when $k^*<+\infty$.
By the definition of $k_\epsilon$ in \eqref{k-epsilon} and $w_\epsilon$ in \eqref{w-epsilon}, we can find $(t_\epsilon,x_\epsilon)$ such that 
\begin{equation}\label{x-epsilon}
w_\epsilon(t_\epsilon,x_\epsilon)=0 \quad \text{and}\quad w_\epsilon(t,x)\not\equiv 0 \quad \text{in any neighborhood of  $(t_\epsilon, x_\epsilon)$.}
\end{equation}
Since $w_\epsilon$ is periodic it $t$, we may assume that $t_\epsilon\in [0,\omega]$. Meanwhile, if $k^*=+\infty$, then we can infer that $x_\epsilon\to+\infty$.

We continue with three different possibilities for $\{x_\epsilon\}$.  (a) $\{x_\epsilon\}$ is bounded. As such, $k^*<+\infty$ and $(t_\epsilon,x_\epsilon)$ converges to (up to subsequence) some point $(t^*,x^*)$, and hence, $w(t^*,x^*)=0$ and
\begin{eqnarray*}
\partial_tw&=&k^* \partial_t U_1-\partial_tU_2\nonumber\\
&=&\partial_{xx}w +c\partial_x w+k^* U_1g(t,x,U_1)-U_2g(t,x,U_2)\nonumber\\
&=&\partial_{xx}w +c\partial_x w+wg(t,x,U_1)+U_2(g(t,x,U_1)-g(t,x,U_2))\nonumber\\
&\ge& \partial_{xx}w +c\partial_x w+wg(t,x,U_1)+U_2(g(t,x,k^*U_1)-g(t,x,U_2))\nonumber\\
&=&\partial_{xx}w +c\partial_x w+wg(t,x,U_1)+U_2\partial_u g(t,x,\xi(t,x))w
\end{eqnarray*}
for some $\xi(t,x)$ in between $k^*U_1(t,x)$ and $U_2(t,x)$. By the strong parabolic maximum principle, we then obtain that $w\equiv 0$, which contradicts the fact that $w(t,-\infty)>0$. Next we consider the possibility (b) $x_\epsilon \to+\infty$ (up to subsequence) with $k^*\in(1,+\infty]$. For small $\epsilon$, $k_\epsilon>1$ and there exists $\rho_\epsilon>0$ such that in the domain 
\begin{equation*}
D_\epsilon:=[0,T]\times (x_\epsilon-\rho_\epsilon, x_\epsilon+\rho_\epsilon),
\end{equation*}
\begin{eqnarray*}
\partial_tw_\epsilon&&=k_\epsilon \partial_t U_1-\partial_tU_2+\epsilon \partial_t p\nonumber\\
&&= \partial_{xx}w_\epsilon +c\partial_x w_\epsilon +k_\epsilon U_1g(t,x,U_1)-U_2g(t,x,U_2)+\epsilon [\partial_t p-D(t)\partial_{xx}p-c\partial_x p] \nonumber\\
&&\ge  \partial_{xx}w_\epsilon +c\partial_x w_\epsilon +(k_\epsilon U_1-U_2)g(t,x,U_1)+U_2[g(t,x,k_\epsilon U_1)-g(t,x,U_2)\nonumber\\
&&\quad +\epsilon [\partial_t p-\partial_{xx}p-c\partial_x p] \nonumber\\
&&= \partial_{xx}w_\epsilon +c\partial_x w_\epsilon+[g(t,x,U_1)+U_2\partial_u g(t,x,\xi(t,x))](k_\epsilon u_1-u_2)\nonumber\\
&&\quad +\epsilon [\partial_t p-\partial_{xx}p-c\partial_x p] \quad\text{for some $\xi(t,x)$ in between $k_\epsilon U_1(t,x)$ and $U_2(t,x)$} \nonumber\\
&&\ge \partial_{xx}w_\epsilon +c\partial_x w_\epsilon+[g(t,x,U_1)+U_2\partial_u g(t,x,\xi(t,x))]w_\epsilon
\end{eqnarray*}
provided that 
\begin{equation}\label{p1}
[\partial_t p-\partial_{xx}p-c\partial_x p] -[g(t,x,U_1)+U_2\partial_u g(t,x,\xi(t,x))]p \ge 0.
\end{equation}
Let us postpone the construction of $p(t,x)$ such that \eqref{p1} holds in $D_\epsilon$. By the strong parabolic maximum principle,  we then obtain $w_\epsilon\equiv 0$ in $D_\epsilon$. This contradicts the choice of $(t_\epsilon, x_\epsilon)$ in \eqref{x-epsilon}.  As for the possibility (c) $x_\epsilon\to-\infty$ (up to subsequence), we have the following inequality in domain $D_\epsilon$ (probably with smaller $\rho_\epsilon$)
\begin{equation*}
\partial_tw_\epsilon \ge \partial_{xx}w_\epsilon +c\partial_x w_\epsilon+[g(t,x,U_1)+U_2\partial_u g(t,x,\eta(t,x))]w_\epsilon
\end{equation*}
for some $\eta(t,x)$ in between $k_\epsilon U_1(t,x)$ and $U_2(t,x)$, provided that \eqref{p1} with $\xi(t,x)$ being replaced by $\eta(t,x)$ holds. By the strong parabolic maximum principle again, we are led to a contradiction. Therefore, $k^*=1$, and hence, $U_1\ge U_2$. The proof is then complete by exchanging the role of $U_1$ and $U_2$. 

Finally, we construct $p(t,x)$ such that \eqref{p1} holds for $t\in \R$ and all sufficiently large $|x|$. Note that $U_i(t,+\infty)=0$ and $U_i(t,-\infty)=\alpha(t)$. It then follows that 
\begin{equation*}
\lim_{x\to+\infty} g(t,x,U_1)+U_2\partial_u g(t,x,\xi(t,x))=g(t,+\infty,0)
\end{equation*}
and
\begin{equation*}
\limsup_{x\to-\infty}g(t,x,U_1)+U_2\partial_u g(t,x,\eta(t,x))=g(t,-\infty,\alpha(t))+\alpha(t)\max_{u\in[\alpha(t),k_\epsilon \alpha(t)]}\partial_u g(t,-\infty, u).
\end{equation*}
Combining with (G2), we see that there exists $x_+>0$ such that 
\begin{equation*}
g(t,x,U_1)+U_2\partial_u g(t,x,\xi(t,x)) \le a_+(t), \quad x\ge x_+,
\end{equation*}
where 
\[
a_+(t):=g(t,+\infty,0)-\f{1}{T}\int_0^T g(t,+\infty,0)dt
\]
has zero average. Similarly,  there exists $x_-<0$ such that 
\begin{equation*}
g(t,x,U_1)+U_2\partial_u g(t,x,\eta(t,x))\le a_-(t), \quad x\le x_-,
\end{equation*}
where 
\begin{eqnarray*}
a_-(t):=&& g(t,-\infty,\alpha(t))+\alpha(t)\max_{u\in[\alpha(t),k_\epsilon \alpha(t)]}\partial_u g(t,-\infty, u)
 \nonumber\\
 && -\f{1}{T}\int_0^T\alpha(t)\max_{u\in[\alpha(t),k_\epsilon \alpha(t)]}\partial_u g(t,-\infty, u)dt
\end{eqnarray*}
also has zero average due to the fact that 
\begin{equation*}
\int_0^T g(t,-\infty,\alpha(t))=\int_0^T \f{\alpha'(t)}{\alpha(t)}=0.
\end{equation*}
Choose $p\in C^{1,2}(\R\times\R)$ such that $p$ is periodic in $t$, 
$p(t,x)=e^{\int_0^t a_-(s)ds}, \, \forall x<x_-$,
and  $p(t,x)=e^{\int_0^t a_+(s)ds},  \, \forall x>x_+$.
Then \eqref{p1} holds for all $t\in[0,T]$ and all sufficiently large $|x|$. 
\end{proof}

\begin{lemma}\label{ex1}
{\sc (Existence)} Problem \eqref{TW} admits a KPP type wave when $c<c^*$.
\end{lemma}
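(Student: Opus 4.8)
The plan is to realize the forced KPP wave as the maximal $T$-periodic solution of \eqref{TW} lying below $\alpha(t)$, using the very same monotone iteration that already appears in the proof of Lemma \ref{est}, and to invoke the subcriticality $c<c^*$ only at the one point where nontriviality must be secured. First I would reuse the supersolution $\alpha(t)$: as shown in the proof of Lemma \ref{est}, $\alpha(t)$ is a spatially homogeneous supersolution of \eqref{TW} with $\alpha(0)=\alpha(T)$, so by Lemma \ref{P}(ii)--(iv) the iterates $P_0^n[\alpha(0)]$ are non-increasing in both $n$ and $x$ and converge locally uniformly to the trace of a $T$-periodic solution $U^*$ of \eqref{TW} with $0\le U^*\le\alpha(t)$, non-increasing in $x$. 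Exactly as in \eqref{lim1}--\eqref{lim2}, the limits $U^*(t,\pm\infty)$ are spatially homogeneous fixed points of $P_{\pm\infty}$, whence $U^*(t,+\infty)=0$ and $U^*(t,-\infty)\in\{0,\alpha(t)\}$. The entire existence statement thus reduces to the single claim $U^*\not\equiv 0$, equivalently $U^*(t,-\infty)=\alpha(t)$.

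To force $U^*\not\equiv 0$ I would construct one nontrivial generalized subsolution $\underline v$ of \eqref{TW} on an interval $(0,t_0)$ with $t_0>T$ and $0\le\underline v\le\alpha(t)$. Pairing $\underline v$ with the supersolution $\alpha(t)$, Lemma \ref{P}(iv) gives $\underline v(0,\cdot)\le P_0[\underline v(0,\cdot)]$, hence by monotonicity $P_0^n[\alpha(0)]\ge P_0^n[\underline v(0,\cdot)]\ge\underline v(0,\cdot)$ for all $n$; letting $n\to\infty$ yields $U^*\ge\underline v(0,\cdot)\not\equiv 0$. Since $U^*$ is then a nontrivial, non-increasing, bounded periodic solution, Lemma \ref{est} (together with the dichotomy in its proof) forces $U^*(t,+\infty)=0$ and $U^*(t,-\infty)=\alpha(t)$, so $U^*$ is a forced KPP wave; combined with Lemmas \ref{nonex} and \ref{uniq} this proves Theorem \ref{ThTW}.

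The construction of $\underline v$ is where $c<c^*$ enters and is the main obstacle. By (G2)--(G3), $\f1T\int_0^T g(t,x,0)\,dt\uparrow\f1T\int_0^T g(t,-\infty,0)\,dt=\f{(c^*)^2}{4}$ as $x\to-\infty$, so one may fix $x_0$ so negative that the frozen KPP equation $v_t=v_{xx}+cv_x+vg(t,x_0,v)$ has spreading speed $c_0^*:=2\sqrt{\f1T\int_0^T g(t,x_0,0)\,dt}$ strictly larger than $c$. Let $\Psi(t,z)$ be its critical $T$-periodic travelling wave, decreasing in $z$ with $\Psi(t,-\infty)=\alpha_0(t)$ (the frozen stable state) and $\Psi(t,+\infty)=0$. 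In the moving frame, $\underline w(t,x):=\Psi(t,x-(c_0^*-c)t)$ solves the frozen equation exactly, and since $g(t,x,\cdot)\ge g(t,x_0,\cdot)$ for $x\le x_0$ by (G2), $\underline w$ is a genuine subsolution of \eqref{TW} on $\{x\le x_0\}$. The subcriticality $c<c_0^*$ makes $c_0^*-c>0$, so this front advances to the right and builds a plateau near $\alpha_0(t)$ on the left; this is exactly the mechanism that pins $U^*(t,-\infty)=\alpha(t)$ and that breaks down once $c\ge c^*$. I would then splice $\underline w$ across $x=x_0$ to a decaying subsolution on $\{x>x_0\}$ controlled by the unfavorable bound $g(t,x,\cdot)\ge g(t,+\infty,\cdot)$, arranging a continuous match with a downward kink at $x_0$, so that the resulting $\underline v$ is a whole-line generalized subsolution with $\underline v\le\alpha(t)$ and $\underline v(0,\cdot)\not\equiv 0$.

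I expect the delicate points to be the following. The splice at $x_0$ must remain a continuous, downward-kinked generalized subsolution while the front value is matched to the decaying tail. The case $c\le-c^*$ is genuinely harder: there $0$ is linearly stable, so no small bump can be a subsolution and an \emph{order-one} subsolution is indispensable, which is precisely why the travelling-front/plateau construction (rather than an eigenfunction bump) is needed. Finally, the borderline $c\uparrow c^*$ forces $x_0\to-\infty$ and a vanishing gap $c_0^*-c$, so the estimates must be uniform down to this limit. A technically cleaner alternative leading to the same subsolution is to solve \eqref{TW} on truncated intervals $[-n,x_0]$ with boundary data $\alpha_0(t)$ at $-n$ and $0$ at $x_0$ and pass to the limit $n\to\infty$, using the a priori bounds behind Lemma \ref{est}.
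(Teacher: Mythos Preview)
Your overall framework coincides with the paper's: iterate $P_0^n[\alpha(0)]$ down to a monotone periodic limit $U^*$, identify the boundary values via \eqref{lim1}--\eqref{lim2}, and reduce existence of the KPP wave to producing a single nontrivial generalized subsolution $\underline v\le\alpha(t)$. The divergence is entirely in how $\underline v$ is built, and your construction has a real gap. The travelling front $\Psi(t,x-(c_0^*-c)t)$ of the frozen equation is strictly positive on all of $\R$, so it cannot simply be truncated by zero to the right of $x_0$, and the value it takes at $x=x_0$ is time-dependent (increasing, since the front advances). Any right-hand piece must therefore match a moving boundary datum continuously \emph{and} satisfy the subsolution kink condition of Lemma~\ref{P}, which in the paper's convention reads $\underline v_x(t,x_0^+)\ge \underline v_x(t,x_0^-)$ --- the opposite of a ``downward'' (concave) corner. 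You do not specify the right-hand piece beyond ``controlled by $g(t,+\infty,\cdot)$'', and there is no obvious candidate that achieves the match; this is the step that would require genuine additional work. Your diagnosis that $c\le -c^*$ forces an \emph{order-one} subsolution is also off: what fails there is only the \emph{compactly supported} sine bump (the growth rate $-c^2/4-(\pi/L)^2+\tfrac{(c^*)^2}{4}$ becomes nonpositive), not smallness itself.

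The paper avoids all splicing by writing down explicit, small, time-periodic subsolutions that already vanish at their right endpoint, so extension by zero is automatic and the kink goes the right way. It splits into two cases dictated by the discriminant of $\mu^2+c\mu+\tfrac{(c^*)^2}{4}-\epsilon$: for $c\in(-c^*,c^*)$ a compactly supported bump $\delta p(t)e^{\lambda t-\frac{c}{2}x}\sin\frac{\pi(x+M)}{L}$ with $p$ the periodic factor \eqref{22} and $\lambda>0$ the gap \eqref{lambda}; for $c\le -c^*$ a half-line supported profile $\max\{0,\delta p(t)(e^{\mu x}-Me^{(\mu+\eta)x})\}$ with $\mu,\mu+\eta$ lying between the (now real) roots. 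Both are $O(\delta)$, both are periodic in $t$ (so $\underline v(0,\cdot)=\underline v(T,\cdot)$ and Lemma~\ref{P}(iv) applies directly), and both are placed far enough left that $g(t,x,\underline v)\ge g(t,-\infty,0)-\epsilon$ on the support. This is more elementary and self-contained than invoking the periodic KPP wave of a frozen problem; if you want to salvage your approach, the cleanest fix is to replace the full front $\Psi$ by precisely such an explicit tail ansatz for the frozen linearisation, which is in effect what the paper does.
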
 
\begin{proof}
From the proof of Lemma \ref{est}, we see that $P_0^n[\alpha(0)]$ converges to a non-increasing function $\phi$ such that 
\begin{equation*}
P_0[\phi]=\phi,\quad \phi(+\infty)=0,\quad \text{$\phi(-\infty)=0$ or $\alpha(0)$.}
\end{equation*} 
Note that if $\phi(+\infty)=\alpha(0)$, then the time periodic extension of $U(t,x;\phi)$ is a KPP wave of \eqref{TW}. 

It suffices to show that $\phi\not\equiv 0$. Indeed, we only need to construct appropriate generalized sub-solutions $\underline{v}$ with $\underline{v}(t,x)\le \alpha(t)$ for all $t\in[0,2T]$ and $x\in\R$. 

We proceed with two cases: (i) $c\in (-c^*,c^*)$; (ii) $c\le -c^*$.

Case (i). $c\in (-c^*,c^*)$. We claim that
\begin{equation*}\label{sub-com}
\underline{v}(t,x):=
\begin{cases}
\delta p(t) e^{\lambda t-\f{c}{2}x}\sin\f{\pi(x+M)}{L}, & x\in [-M,-M+L]\\
0, & otherwise
\end{cases}
\end{equation*}
is a generalized sub solution of \eqref{TW} for $t\in (0, 2T)$ provided that the positive constants $\delta, \lambda, M, L$ and the periodic function $p(t)$ are appropriately chosen.  

Indeed, it suffices to find $M>0$ and $L>0$ such that that for $x\in(-M, -M+L)$ and $t\in (0, 2T)$ there holds $-\underline{v}_t+\underline{v}_{xx}+c\underline{v}_x+\underline{v}g(t,x,\underline{v})\ge 0$. Indeed, for $\epsilon>0$, we see from (G1) that there exist $\delta_\epsilon>0$ and $x_\epsilon<0$ such that
\begin{equation*}
g(t,x,u)\ge g(t,-\infty,0)-\epsilon, \quad u\in [0,\delta_\epsilon], x\le x_\epsilon.
\end{equation*}
By direct computations, we obtain
\begin{equation*}
\underline{v}_t=\left(\lambda +\f{p'}{p}\right) \underline{v},
\end{equation*}
and
\begin{eqnarray*}
&&\underline{v}_{xx}+c\underline{v}_x\nonumber\\
&&=\left\{\left[\f{c^2}{4}-\left(\f{\pi}{L}\right)^2-\f{c\pi}{L}\cot\f{\pi(x+M)}{L}\right]+c\left(-\f{c}{2\theta}+\f{\pi}{L}\cot\f{\pi(x+M)}{L}\right)\right\}\underline{v}\nonumber\\
&&= \left[-\f{c^2}{4}-\left(\f{\pi}{L}\right)^2\right]\underline{v}.
\end{eqnarray*}
Since $c\in(-c^*, c^*)$, there exists $L_0>0$ and $\epsilon_0>0$ such that 
\begin{equation}\label{ineq11}
\f{c^2}{4}+\left(\f{\pi}{L}\right)^2<\f{(c^*)^2}{4}-\epsilon,\quad L\ge L_0, \epsilon\le \epsilon_0.
\end{equation}
Consequently, for all $x\in(x_\epsilon-L, x_\epsilon)$  and $t\in (0,2T)$ we arrive at
\begin{eqnarray*}
&& \underline{v}^{-1}[-\underline{v}_t+\underline{v}_{xx}+c\underline{v}_x+\underline{v}g(t,x,\underline{v})]\nonumber\\
 &&=-\lambda-\f{p'}{p}-\f{c^2}{4}-\left(\f{\pi}{L}\right)^2+g(t,x,\underline{v})\nonumber\\
&&\ge  -\lambda-\f{p'}{p}-\f{c^2}{4}-\left(\f{\pi}{L}\right)^2+g(t,-\infty,0)-\epsilon\nonumber\\
&&=0
\end{eqnarray*}
provided that
\begin{equation}\label{ineq12}
\sup_{t\in(0,2T), x\in(x_\epsilon-L,x_\epsilon)}\underline{v}(t,x)\le \delta_\epsilon,
\end{equation}
\begin{equation}\label{lambda}
\lambda=\f{(c^*)^2}{4}-\epsilon-\f{c^2}{4}-\left(\f{\pi}{L}\right)^2,
\end{equation}
and 
\begin{equation}\label{22}
p(t)=e^{\int_0^t [-\f{(c^*)^2}{4}+g(s,-\infty,0)]ds}.
\end{equation}
It  then follows from \eqref{ineq11} that $\lambda>0$ for all $\epsilon\in (0,\epsilon_0]$ and $L\ge L_0$. By \eqref{def-cstar},  we further infer that $p(t)=p(t+T)>0$.  
Since 
\begin{equation*}
\sup_{t\in(0,2T), x\in(x_{\epsilon}-L,x_{\epsilon})}\underline{v}(t,x)\le \delta e^{2T\lambda-\f{c}{2}(x_{\epsilon}-L)}\max_{t\in[0,T]}p(t),
\end{equation*}
we see that \eqref{ineq12} holds provided that 
\begin{equation}\label{delta}
\delta\le \left[e^{2T\lambda-\f{c}{2}(x_{\epsilon}-L)}\max_{t\in[0,T]}p(t)\right]^{-1}\delta_{\epsilon}.
\end{equation}
Thus, the  claim is proved by choosing $\lambda$ and $\delta$ as in  \eqref{lambda} and \eqref{delta}, respectively,  with 
\begin{equation*}\label{23}
\epsilon=\epsilon_0, \quad L=L_0,\quad M\ge-x_{\epsilon_0}.
\end{equation*}

Case (ii). $c\le -c^*$. For any $\epsilon>0$, there exists $x_\epsilon<0$ and $\delta_\epsilon$ such that $g(t,x,u)\ge g(t,-\infty,0)-\epsilon, x\le x_\epsilon, u\in [0,\delta_\epsilon]$. We assume $\underline{v}$ has the following form
\begin{equation*}\label{vsub}
\underline{v}(t,x)=\max\{0, \delta p(t) (e^{\mu x}-M e^{(\mu+\eta) x})\},
\end{equation*}
where the positive numbers $\delta, \mu,\eta, M$ and the periodic positive function $p$ will be specified later so that 
\begin{eqnarray}\label{ineq13}
&&-\underline{v}_t+\underline{v}_{xx}+c\underline{v}_x+\underline{v}g(t,x,\underline{v})\nonumber\\
&&\ge -\underline{v}_t+\underline{v}_{xx}+c\underline{v}_x+\underline{v}[g(t,-\infty,0)-\epsilon]\nonumber\\
&&\ge 0
\end{eqnarray}
in the region $\Omega$ where $\underline{v}>0$. To ensure the first inequality in \eqref{ineq13}, it suffices to show that
\begin{equation}\label{ineq17}
(t,x)\in \Omega\Rightarrow x\le x_\epsilon, \quad \underline{v}\le \delta_\epsilon.
\end{equation}
In region $\Omega$, $\underline{v}(t,x)=w(t,x):=\delta p(t) (e^{\mu x}-M e^{\mu+\eta x})$. To ensure the second inequality in \eqref{ineq13}, it then suffices to verify that $-w_t+w_{xx}+cw_{x}+w[g(t,-\infty,0)-\epsilon]\ge 0$ in $\Omega$.  By direct computations, one can quickly obtain
\begin{eqnarray*}
&&-w_t+w_{xx}+cw_{x}+w[g(t,-\infty,0)-\epsilon]\nonumber\\
&&= \delta p(t)e^{\mu x}\left[-\f{p'}{p}+\mu^2+c\mu+g(t,-\infty,0)-\epsilon\right]\nonumber\\
&&\quad -\delta p(t)Me^{(\mu+\eta) x}\left[-\f{p'}{p}+(\mu+\eta)^2+c(\mu+\eta)+g(t,-\infty,0)-\epsilon\right] \nonumber\\
&&\ge 0
\end{eqnarray*}
provided that
\begin{equation}\label{ineq14}
\mu^2+c\mu+\f{1}{T}\int_0^T g(t,-\infty,0)dt-\epsilon=0,
\end{equation}
\begin{equation}\label{ineq15}
(\mu+\eta)^2+c(\mu+\eta)+\f{1}{T}\int_0^T g(t,-\infty,0)dt-\epsilon<0,
\end{equation}
and 
\begin{equation}\label{ineq16}
p(t)=e^{\int_0^t [\mu^2+c\mu+g(s,-\infty,0)-\epsilon]ds}.
\end{equation}
Note that
\begin{equation*}
c^2\ge (c^*)^2=4\f{1}{T}\int_0^T g(t,-\infty,0)dt >4\left[\f{1}{T}\int_0^T g(t,-\infty,0)dt -\epsilon\right].
\end{equation*}
It then follows that there exists $\mu$ and $\eta$ such that \eqref{ineq14} and \eqref{ineq15} hold. As such, $p(t)$ defined in \eqref{ineq16} is periodic. 

To make sure \eqref{ineq17} holds, we may choose $M>0$ and $\delta>0$ such that 
\begin{equation}\label{ineq18}
\f{1}{\eta}\ln \f{1}{M}<x_\epsilon
\end{equation}
and 
\begin{equation}\label{ineq19}
\sup_{x\le x_\epsilon, t\in [0,T]} \underline{v}(t,x)\le \delta \max_{t\in[0,T]}p(t)e^{\mu x_\epsilon}<\delta_\epsilon.
\end{equation}
Therefore, $\underline{v}$ is a generalized sub solution provided that $\mu, \eta, p(t), M$ and $\delta$ are chosen such that \eqref{ineq14}-\eqref{ineq19}  hold.
\end{proof}

\section{Forced pulse waves}
In this section, we prove Theorem \ref{ThPulse} on forced pulse waves, which is a straightforward consequence of the following two lemmas.

\begin{lemma}\label{nopulse}
{\sc (Nonexistence)} Problem \eqref{TW} admits no pulse wave when $c>-c^*$. 
\end{lemma}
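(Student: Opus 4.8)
The plan is to split the range $c>-c^*$ into the two subranges $c\ge c^*$ and $-c^*<c<c^*$, whose union is exactly $\{c>-c^*\}$, and to dispatch them by quite different means. The subrange $c\ge c^*$ is immediate and costs nothing: a forced pulse wave is, in particular, a positive solution of \eqref{TW}, and Lemma \ref{nonex} already asserts that \eqref{TW} admits no positive solution once $c\ge c^*$. Hence no pulse wave can exist there, and all the work is in the complementary subrange.

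For $-c^*<c<c^*$ positive solutions do exist (the forced KPP wave of Lemma \ref{ex1}), so I cannot argue through positivity; instead I must exclude the left boundary condition $U(t,-\infty)=0$. The guiding principle is that when $c^2<(c^*)^2$ the trivial state is linearly unstable at $-\infty$, which forbids any positive solution from decaying to $0$ there. I would argue by contradiction. Suppose $U$ is a forced pulse wave. Since $U(t,-\infty)=0$ \emph{uniformly} in $t\in[0,T]$ and $U$ is $T$-periodic, for the numbers $\epsilon,\delta_\epsilon,x_\epsilon$ furnished by (G1)--(G2) exactly as in the proof of Lemma \ref{ex1} I can fix a far-left window $W=(-M,-M+L)$ with $-M+L$ so negative that $0<U(t,x)\le \delta_\epsilon$ and $x\le x_\epsilon$ hold for all $t$ and all $x\in\overline W$. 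On $W$ we then have $g(t,x,U)\ge g(t,-\infty,0)-\epsilon$, so $U$ is a supersolution of the \emph{linear} periodic-parabolic equation $v_t=v_{xx}+cv_x+[g(t,-\infty,0)-\epsilon]v$.

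Against $U$ I would place the explicit time-growing function already built in Lemma \ref{ex1}, Case (i), namely $\underline v(t,x)=\delta\,p(t)\,e^{\lambda t-\f{c}{2}x}\sin\f{\pi(x+M)}{L}$ with $\lambda=\f{(c^*)^2}{4}-\epsilon-\f{c^2}{4}-\left(\f{\pi}{L}\right)^2$ and $p$ the periodic function \eqref{22}. Because $c^2<(c^*)^2$, taking $L$ large and $\epsilon$ small makes $\lambda>0$, and the computation in Lemma \ref{ex1} in fact yields \emph{equality}, i.e.\ $\underline v$ solves the above linear equation on $W$ and vanishes on the lateral boundary $x=-M,\,-M+L$. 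Choosing $\delta$ small enough that $\underline v(0,\cdot)\le U(0,\cdot)$ on $\overline W$ (possible since $U(0,\cdot)$ has a positive minimum on the compact window while $\underline v(0,\cdot)$ vanishes at the endpoints), the parabolic comparison principle on $(0,\infty)\times W$ gives $\underline v(t,x)\le U(t,x)$ for all $t\ge0$ and $x\in W$. But $\underline v$ grows like $e^{\lambda t}$ with $\lambda>0$, whereas $0<U(t,x)<\alpha(t)$ is bounded by Lemma \ref{est}; letting $t\to\infty$ produces the contradiction, so no pulse wave exists when $-c^*<c<c^*$.

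The one delicate point, and the step I would watch most carefully, is keeping the comparison valid for all time. The growing function $\underline v$ eventually exceeds $\delta_\epsilon$ and would cease to be a subsolution of the \emph{nonlinear} equation \eqref{TW}; it is therefore essential to compare against the linear equation, for which $\underline v$ is an exact solution everywhere, and to know that $U$ stays a supersolution of that linear equation on the whole of $W$ for every $t$. This is precisely where the \emph{uniform}-in-$t$ decay $U(t,-\infty)=0$ built into the definition of a pulse wave enters, since it guarantees $U\le\delta_\epsilon$ on $W$ for all $t$.
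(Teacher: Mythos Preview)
Your proof is correct. Both you and the paper split into $c\ge c^*$ (dispatched by Lemma~\ref{nonex}) and $-c^*<c<c^*$, and both exploit the same explicit function $\underline v(t,x)=\delta\,p(t)\,e^{\lambda t-\frac{c}{2}x}\sin\frac{\pi(x+M)}{L}$ from Lemma~\ref{ex1} with $\lambda>0$. The mechanism for extracting the contradiction differs, however. The paper runs a \emph{sliding argument}: it adjusts $M$ and $\delta$ (within the constraints \eqref{delta}--\eqref{23}) so that the nonlinear subsolution $\underline v$ lies below the pulse wave $U$ with a touching point, and then invokes the strong maximum principle. You instead fix the window $W$ far enough left that $U\le\delta_\epsilon$ there for \emph{all} time (using the uniform limit $U(t,-\infty)=0$), pass to the \emph{linear} equation $v_t=v_{xx}+cv_x+[g(t,-\infty,0)-\epsilon]v$ for which $\underline v$ is an exact solution and $U$ a supersolution, and let the exponential growth $e^{\lambda t}$ blow past the bounded $U$. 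Your route is slightly more elementary---it avoids the sliding step and the delicate question of where the touching occurs---and your final paragraph correctly identifies why the linear comparison (rather than the nonlinear one) is needed to make the argument work for all $t>0$.
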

\begin{proof}
By Lemma \ref{nonex} we see that there is no pulse wave when $c\ge c^*$. Next we consider $c\in (-c^*,c^*)$. As such, the compact supported function $\underline{v}$ defined in \eqref{sub-com} is a sub-solution provided that \eqref{lambda}, \eqref{22}, \eqref{delta} and \eqref{23} hold. 

We use a sliding argument. Assume, for the sake of contradiction, that there is a pulse wave $U(t,x)$. Then we may adjust parameters $M$ and $\delta$ satisfying \eqref{delta} and \eqref{23} such that $U(t,x)\ge \underline{v}$ for all $(t,x)$ and $U(0,x)=\underline{v}(0,x)$ for some $x=x^*$. Then an application of the strong maximum principle leads to a contradiction. 
\end{proof}

\begin{lemma}\label{exIII}
{\sc (Existence)} Problem \eqref{TW} admits (infinitely many) pulse waves in either of the following two cases: (i) $c< -c^*$ and \eqref{c4} with $m=1$ holds; (2) $c= -c^*$ and \eqref{c4} with $m=2$ holds. 
\end{lemma}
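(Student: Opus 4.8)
The plan is to produce a nonzero $T$-periodic solution of \eqref{TW} that decays to $0$ at both ends by trapping it between an ordered pair of \emph{generalized} sub- and super-solutions that themselves vanish at $\pm\infty$, and then squeezing via the Poincar\'e map as in Lemma \ref{P}(iv). Concretely, for a small tail level $\kappa>0$ I would build a $T$-periodic generalized super-solution $\bar v_\kappa$ with $\bar v_\kappa(t,\pm\infty)=0$ together with a $T$-periodic generalized sub-solution $\underline v\le\bar v_\kappa$ with $\underline v\not\equiv0$. Since $\bar v_\kappa$ is periodic we have $\bar v_\kappa(0,\cdot)=\bar v_\kappa(T,\cdot)$, so Lemma \ref{P}(iv) yields a periodic solution $U^*$ with $\underline v\le U^*\le\bar v_\kappa$; this $U^*$ is nonzero because $U^*\ge\underline v$, and it is a pulse because $U^*\le\bar v_\kappa\to0$ at $\pm\infty$. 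Note that $\lambda_{1,c}>0$ for $c\le-c^*$ by \eqref{lambda1}, which is what makes decay at $-\infty$ possible.

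Constructing the super-solution is routine. Near $-\infty$, letting $\psi_1$ be the periodic factor with $\psi_1'/\psi_1=\lambda_{1,c}^2+c\lambda_{1,c}+g(t,-\infty,0)$ (periodic since $\lambda_{1,c}$ solves \eqref{ave}), the function $\kappa\psi_1(t)e^{\lambda_{1,c}x}$, or $\kappa\psi_1(t)(-x)e^{\lambda_{1,c}x}$ in the critical case $c=-c^*$, is a super-solution for every $\kappa>0$ simply because $g(t,x,u)\le g(t,-\infty,0)$ by (G2). Near $+\infty$ I would reuse the decaying super-solution built from $\mu_c<0$ exactly as in the proof of Lemma \ref{nonex}. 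Taking the minimum of the two (a generalized super-solution, as minima create only the admissible downward corners) gives a global periodic super-solution that vanishes at both ends; this step needs no tail hypothesis.

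The crux is the sub-solution. To fit below $\bar v_\kappa$, it must decay at the \emph{critical} rate $e^{\lambda_{1,c}x}$ as $x\to-\infty$, so there is no spectral gap to spend and a polynomial correction is forced. I would take $\underline v=\delta\psi_1(t)e^{\lambda_{1,c}x}h(x)$ on a left half-line, glued down to $0$ through a faster-decaying term so that it is a genuine bump meeting $0$ with the admissible corner $\underline v_x(x^+)\ge\underline v_x(x^-)$, with $h(x)=1\mp C_0(-x)^{-r_0}$. A direct computation gives $L_{-\infty}[\underline v]=-\psi_1 e^{\lambda_{1,c}x}\bigl[(2\lambda_{1,c}+c)h'+h''\bigr]$, and the sub-solution inequality reduces to $(2\lambda_{1,c}+c)h'+h''$ dominating the environmental error $|g(t,-\infty,0)-g(t,x,0)|=o(|x|^{-r_0-m})$ from \eqref{c4} plus an $O(\underline v)$ self-interaction term. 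When $c<-c^*$ we have $2\lambda_{1,c}+c=-\sqrt{c^2-(c^*)^2}\ne0$, so the first-order term contributes at order $(-x)^{-r_0-1}$ and beats the error already for $m=1$; when $c=-c^*$ the root is double, $2\lambda_{1,c}+c=0$, and only $h''$ survives at order $(-x)^{-r_0-2}$, which is precisely why $m=2$ is required. This degeneracy is the main obstacle and is exactly where \eqref{c4} is indispensable; the sign of $C_0$ is chosen in each case so that $\underline v$ stays below $\bar v_\kappa$.

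Finally, for the multiplicity, observe that $\bar v_\kappa$ can be built with arbitrarily small tail level $\kappa$, and a correspondingly small sub-solution (take $\delta=\kappa/2$, admissible since scaling down preserves the sub-solution property by subhomogeneity) still lies below it. The height of the super-solution, namely the common value of its two exponential branches at their crossing, tends to $0$ as $\kappa\downarrow0$, hence $\|U^*_\kappa\|_\infty\le\|\bar v_\kappa\|_\infty\to0$ while $\|U^*_\kappa\|_\infty>0$. Thus one obtains pulse waves of arbitrarily small positive height; if only finitely many pulse waves existed, their heights would admit a positive lower bound, a contradiction. Therefore \eqref{TW} admits infinitely many forced pulse waves in each of the two cases.
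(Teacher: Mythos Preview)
Your overall strategy---an ordered sub/super pair fed into Lemma~\ref{P}(iv)---is the paper's, and in the non-critical case your construction is essentially the same (the paper takes $\underline v=\delta p(t)e^{\mu_1 x}\bigl[1-M|x|^{-r}\bigr]$ with $r<r_0$, while you take $r=r_0$; both work once the cutoff is pushed far enough left). Two executional points differ. First, the paper does not build a two-sided super-solution: it uses only the one-sided barrier $\bar v=\delta p(t)e^{\mu_1 x}$, which forces $U^*(t,-\infty)=0$, and then invokes Lemma~\ref{est} to get $U^*(t,+\infty)=0$. Second, for multiplicity the paper left-translates the whole sub/super pair (left shifts preserve both properties thanks to the monotonicity of $g$ in $x$) until the new sub-solution is no longer dominated by the previously constructed pulse; your height-shrinking argument is a legitimate alternative.

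There is, however, a genuine gap in your critical case $c=-c^*$. With $2\lambda_{1,c}+c=0$ the sign $h''>0$ forces $h(x)=1+C_0(-x)^{-r_0}$ with $C_0>0$, so $h$ never vanishes, and your unspecified ``gluing down to $0$ through a faster-decaying term'' does not go through. If one subtracts $Me^{\eta x}$ to create a zero at $x_0$, then at $x_0$ one has $Me^{\eta x_0}\approx 1$ while the contribution of this term to $(2\lambda_{1,c}+c)h'+h''$ is $-M\eta^2e^{\eta x_0}\approx-\eta^2$, which swamps $h''(x_0)=O((-x_0)^{-r_0-2})$; a polynomial corrector $-B(-x)^{-s}$ has the same defect. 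The paper avoids this by changing the \emph{leading} polynomial factor: in the critical case it sets
\[
\underline v(t,x)=\delta p(t)e^{\mu_1 x}\bigl[|x|-M|x|^{1-r}\bigr],\qquad r\in(0,\min\{1,r_0\}),
\]
which vanishes exactly at $|x|=M^{1/r}$ and, after the same computation you indicate, leaves the residual $Mr(1-r)|x|^{-r-2}+o(|x|^{-r_0-2})$; since $r<r_0$, this is nonnegative on the whole support $\{|x|\ge M^{1/r}\}$ for $M$ large. Thus the correct ansatz in the degenerate case carries the double-root factor $|x|$ in the leading term, not in the super-solution alone.
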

\begin{proof}
In view of $c\le -c^*$, we use $\mu_1$ to denote the smallest positive solution of 
\begin{equation}\label{mu1}
\mu^2+c\mu+\f{1}{T}\int_0^Tg(t,-\infty,0)dt=0
\end{equation} 
Define the periodic function $p(t)$ by
\begin{equation}\label{def-ppp}
p(t):=e^{\int_0^t [g(s,-\infty,0)-\f{1}{T}\int_0^Tg(\theta,-\infty,0)d\theta]ds}.
\end{equation}
By (G2), we  see that 
\begin{equation*}
\bar{v}(t,x):=\delta p(t)e^{\mu_1 x}, \quad \delta>0
\end{equation*} is a sup-solution of \eqref{TW}. 

{\it Claim 2. The function defined by 
\begin{equation*}
\underline{v}(t,x):=
\begin{cases}
\delta p(t)e^{\mu_1 x}[|x|^k-M|x|^{k-r}], & x<-M^{\f{1}{r}}\\
0, &x\ge-M^{\f{1}{r}}
\end{cases}
\end{equation*}
is a generalized sub solution of \eqref{TW} if the positive constants $k,\delta, M$ and $r$ are appropriately chosen. }

Let us postpone the proof of the claim and complete the proof in a few lines. By the same iteration argument as in Lemma \ref{ex1}, we obtain a time periodic positive solution $U^*(t,x)$ of \eqref{TW} with 
\begin{equation*}
\underline{v}(t,x)\le U^*(t,x)\le \bar{v}(t,x), \quad t\in[0,T], x\in\R.
\end{equation*}
By Lemma \ref{ex1},  we obtain $U^*(t,-\infty)=0$. Since $\bar{v}(t,+\infty)=0$, we have $U^*(t,+\infty)=0$.

Now we return to the proof of Claim 2 above. It suffices to check that for any $t\in\R$ and $x<-M^{\f{1}{r}}$, $\mc{L}\underline{v}:=-\underline{v}_t+\underline{v}_{xx}+c\underline{v}_x+\underline{v}g(t,x,\underline{v})\ge 0$. 
By computations,  we obtain
\begin{equation*}
\underline{v}_x=\delta p(t) e^{\mu_1x}[\mu_1(|x|^k-M|x|^{k-r})+x^{-1}(k|x|^k-M(k-r)|x|^{k-r})],
\end{equation*}
\begin{eqnarray*}
\underline{v}_{xx}=&&\delta p(t) e^{\mu_1x} [\mu_1^2(|x|^k-M|x|^{k-r})+2\mu_1  x^{-1}(k|x|^k-M(k-r)|x|^{k-r})\nonumber\\
&& +|x|^{-2}(k(k-1)|x|^{k}-M(k-r)(k-r-1)|x|^{k-r})],
\end{eqnarray*}
and
\begin{eqnarray*}
&&|g(t,x,\underline{v}(t,x))-g(t,-\infty,0)|\nonumber\\
&&\le |g(t,x,\underline{v}(t,x))-g(t,x,0)|+|g(t,x,0)-g(t,-\infty,0)|\nonumber\\
&&\le |\partial_u g(t,x,\xi(t,x))|\underline{v}(t,x)+|g(t,x,0)-g(t,-\infty,0)|\nonumber\\
&&\le  \max_{t,y,s}\delta p(t)|\partial_u g(t,y,s)| |x|^k e^{\mu_1 x}+|g(t,x,0)-g(t,-\infty,0)|,
\end{eqnarray*}
where $\xi(t,x)\in (0,\underline{v}(t,x))$. By assumption \eqref{c4}, we have $g(t,x,0)-g(t,-\infty,0)=o(|x|^{-r_0-m})$ as $x\to-\infty$. Thus, 
\begin{equation*}
g(t,x,\underline{v}(t,x))= g(t,-\infty,0)+o(|x|^{-r_0-m}), \quad \text{as $x\to-\infty$}.
\end{equation*}
It then follows that
\begin{eqnarray}\label{estimate}
&&[\delta p(t) |x|^ke^{\mu_1x}]^{-1}\mc{L}\underline{v}\nonumber\\
&&\ge -\f{p'}{p}(1-M|x|^{-r})+[\mu_1^2(1-M|x|^{-r})+2\mu_1  x^{-1}(k-M(k-r)|x|^{-r})\nonumber\\
&& \quad +|x|^{-2}(k(k-1)-M(k-r)(k-r-1)|x|^{-r})]\nonumber\\
&&\quad +c[\mu_1(1-M|x|^{-r})+x^{-1}(k-M(k-r)|x|^{-r})]\nonumber\\
&&\quad +[g(t,-\infty,0)+o(|x|^{-r_0-m})](1-M|x|^{-r}).
\end{eqnarray}
In view of \eqref{mu1} and \eqref{def-ppp}, we have
\begin{eqnarray*}\label{31}
&&[\delta p(t) |x|^ke^{\mu_1x}]^{-1}\mc{L}\underline{v}\nonumber\\
&&\ge (2\mu_1+c)  x^{-1}(k-M(k-r)|x|^{-r})+|x|^{-2}(k(k-1)-M(k-r)(k-r-1)|x|^{-r})\nonumber\\
&&\quad +o(|x|^{-r_0-m}).
\end{eqnarray*}
Next we proceed with two cases. 

(i) $c<-c^*$. In this case, we choose $k=0$ and assume that \eqref{c4} with $m=1$ holds. Hence, \eqref{estimate} reduces to 
\begin{equation*}
[\delta p(t) |x|^ke^{\mu_1x}]^{-1}\mc{L}\underline{v}\ge  Mr |x|^{-r}x^{-1}[(2\mu_1+c)-x^{-1}(r+1)+o(|x|^{-r_0+r})].
\end{equation*}
This implies that  $\mc{L}\underline{v}\ge0$ provided that 
\begin{equation*}
(2\mu_1+c)-x^{-1}(r+1)+o(|x|^{-r_0-r})\le 0, \quad x<-M^{\f{1}{r}},
\end{equation*}
which is true if $r\in (0,r_0)$ and $M=M(r)$ is sufficiently large, thanks to 
$2\mu_1+c<0$.

(ii) $c=-c^*$. As such, we have $2\mu_1+c=0$.
We then choose $k=1$ and assume that \eqref{c4} with $m=2$ holds. Hence,  \eqref{estimate} reduces to 
\begin{equation*}
[\delta p(t) |x|^ke^{\mu_1x}]^{-1}\mc{L}\underline{v}\ge  |x|^{-r-2}[Mr(1-r) +o(|x|^{-r_0+r})], \quad x<-M^{\f{1}{r}},
\end{equation*}
which is nonnegative provided that $r\in (0,\min\{1,r_0\})$ and $M=M(r)$ is sufficiently large. 

Till now, we have shown the existence of pulse waves when $c\le -c^*$ subject to the condition \eqref{c4}. As for the multiplicity, we shift the above pair of sup-and sub-solutions to the left with a certain length such that the new sub-solution intersects with the established pulse wave. With this new pair of sup- and sub-solutions,  we obtain a different pulse wave. This argumet can be repeated infinitely many times. 
\end{proof}

\section{Propagation dynamics}

In this section, we study various spreading properties of solutions to the 
nonautonomous evolution equation  \eqref{main-eq}.

For any given  $\phi\in C(\R,\R_+)\setminus \{0\}$,  let $u(t,x;\phi)$ be the unique solution of \eqref{main-eq}.  From  Lemmas \ref{uniq} and \ref{ex1}, we see  that for any $c<c^*$, there is a unique KPP type wave $U(t,x-ct)$. Further, the proof of Lemma \ref{est} implies that for any $M\ge \alpha(0)$, the solution $u(t,x;M)$ converges to $U(t,x-ct)$ locally uniformly in the variable $\xi:=x-ct$ as $t\to \infty$.  We further have the following strong convergence
result.

\begin{lemma}\label{uniformconv}
Assume that $c<c^*$ and $M\ge \alpha(0)$. Then $\lim_{t\to\infty}|u(t,x;M)-U(t,x-ct)|=0$ uniformly in $x\in\R$.
\end{lemma}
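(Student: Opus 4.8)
We want to upgrade the locally uniform convergence of $u(t,x;M)$ to $U(t,x-ct)$ into uniform convergence in $x\in\R$. The plan is to control the convergence separately in three regions: a bounded window in the moving frame $\xi=x-ct$, the far-right tail $\xi\to+\infty$, and the far-left tail $\xi\to-\infty$. In the moving coordinate, set $v(t,\xi):=u(t,\xi+ct;M)$, so that $v$ solves \eqref{TW} and the statement becomes $\lim_{t\to\infty}\sup_{\xi\in\R}|v(t,\xi)-U(t,\xi)|=0$. Since both $v$ and $U$ are trapped between $0$ and $\alpha(t)$ by Lemma \ref{est} and the comparison in Lemma \ref{P}, the difference is automatically small wherever both functions are uniformly small. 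The difficulty is thus concentrated at the two tails.

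\textbf{The left tail.} Here $U(t,\xi)\to\alpha(t)$ as $\xi\to-\infty$, so I would show $v(t,\xi)$ is close to $\alpha(t)$ for $\xi$ very negative, uniformly in large $t$. Starting from $v(0,\cdot)\equiv M\ge\alpha(0)$, the monotone iteration from the proof of Lemma \ref{est} gives $v(t,\xi)\le\alpha(t)+o(1)$. For the lower bound, I would fix a large negative $\xi_0$ and use that on the half-line $\xi\le\xi_0$ the nonlinearity $g(t,\xi,u)$ is uniformly close to the favorable limit $g(t,-\infty,u)$; comparison with a subsolution built from the stable periodic state $\alpha(t)$ of \eqref{ODE}, which attracts initial data bounded below, forces $v(t,\xi)\ge\alpha(t)-\varepsilon$ for $\xi\le\xi_0$ and $t$ large. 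A cleaner route is to exploit that $v$ is non-increasing in $\xi$ (by Lemma \ref{P}(ii), preserved under the flow) together with $U$ being non-increasing; the ordering $v\ge U$ or $v\le U$ on the left then reduces the tail estimate to a single limit as $\xi\to-\infty$, where both sides converge to $\alpha(t)$.

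\textbf{The right tail and conclusion.} For $\xi\to+\infty$ both $v$ and $U$ vanish, so the issue is whether $v$ decays at a uniform rate. I would dominate $v$ by the periodic supersolution $w_1^{M^*}(t,\xi)=M^*e^{\lambda_{1,c}\xi}\psi(t)$ constructed in the proof of Lemma \ref{nonex} (valid for any $c<c^*$ with $\lambda_{1,c}$ the appropriate decay exponent), giving $\sup_{\xi\ge R}v(t,\xi)\le C e^{\lambda_{1,c}R}$ uniformly in $t$; the same bound applies to $U$ by Lemma \ref{est}. Hence $\sup_{\xi\ge R}|v-U|$ is small once $R$ is large, uniformly in $t$. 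Combining the three pieces: choose $R$ large so the right-tail difference is below $\varepsilon$, choose $-\xi_0$ large so the left-tail difference is below $\varepsilon$, and then invoke the already-established locally uniform convergence on the compact window $[\xi_0,R]$ to make that contribution below $\varepsilon$ for $t$ large.

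\textbf{The main obstacle.} The delicate step is the uniformity of the left-tail estimate in $t$: one must verify that $v(t,\xi)$ approaches $\alpha(t)$ as $\xi\to-\infty$ \emph{uniformly for all large $t$}, not merely for each fixed $t$. I expect to handle this by the translation identity \eqref{translation} together with a diagonal/compactness argument in the spirit of \eqref{lim1}--\eqref{lim2}: any failure of uniform convergence would produce, along a sequence $\xi_n\to-\infty$ and $t_n\to\infty$, a limiting entire solution of the limiting KPP equation \eqref{limiting} bounded away from $\alpha(t)$, contradicting the global stability of $\alpha(t)$ for \eqref{ODE} and the spreading theory of \eqref{limiting}. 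The monotonicity of both $v$ and $U$ in $\xi$ is the structural feature that makes this compactness argument go through cleanly.
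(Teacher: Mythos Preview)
Your right-tail argument has a genuine gap. The supersolution $w_1^{M^*}(t,\xi)=M^*e^{\lambda_{1,c}\xi}\psi(t)$ from the proof of Lemma~\ref{nonex} is constructed only for $c\ge c^*$; indeed $\lambda_{1,c}$ in \eqref{lambda1} is defined only when $|c|\ge c^*$, and in the current regime $c<c^*$ equation~\eqref{ave} has no real roots at all. Even if you switch to the decay exponent $\mu_c$ associated with $g(t,+\infty,0)$ (which \emph{is} defined for every $c$), you still cannot initiate the comparison: the initial datum is the constant $M$, which lies under no exponentially decaying profile at $t=0$. So the bound $\sup_{\xi\ge R}v(t,\xi)\le Ce^{\lambda\,R}$ uniformly in $t$ is not available from a direct barrier, and your three-region patching does not close.

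The paper handles the right tail by a compactness/limiting-equation argument instead: if uniform convergence failed there, one would find sequences $t_n\to\infty$, $\xi_n\to+\infty$ with $u(t_n,\xi_n+ct_n;M)=\delta_0>0$; shifting and passing to a limit produces a nonnegative bounded entire solution of $w_t=w_{xx}+wg(t,+\infty,w)$, and assumption (G4) forces $w\equiv 0$, contradicting $w(0,0)=\delta_0$. No barrier is needed.

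Your left-tail discussion is also more elaborate than necessary, and the ``main obstacle'' you flag dissolves with the right ordering. Since $M\ge\alpha(0)\ge U(0,\cdot)$, comparison gives $u(t,x;M)\ge U(t,x-ct)$ for all $t\ge 0$; on the other side, $u(t,x;M)\le v(t)$ where $v$ solves $v'=vg(t,-\infty,v)$, $v(0)=M$, and $v(t)-\alpha(t)\to 0$. Thus $0\le u-U\le v(t)-U(t,x-ct)$, and for $x-ct\le x_0$ with $x_0$ chosen so that $U(t,\xi)\ge \alpha(t)-\varepsilon$ for $\xi\le x_0$, the right-hand side is at most $v(t)-\alpha(t)+\varepsilon$, which is below $2\varepsilon$ for large $t$. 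No compactness or monotonicity-in-$\xi$ is needed on the left; combined with the locally uniform convergence and the right-tail compactness argument above, this gives the uniform limit.
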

\begin{proof}
In view of the monotonicity of $U(t,x)$ in $x\in\R$, we have $U(0,x)\le U(0,-\infty)=\alpha(0)$. It then follows from the comparison principle that 
\begin{equation}\label{31}
U(t,x)\le u(t,x;M)
\end{equation} 
due to $M\ge \alpha(0)$. On the other hand, the solution $v(t)$ of
\begin{equation*}
v'(t)=vg(t,-\infty,v), \quad v(0)=M
\end{equation*}
converges to $\alpha(t)$ in the sense that $\lim_{t\to\infty}|v(t)-\alpha(t)|=0$. Thanks to $g(t,x,s)\le g(t,-\infty,0)$, we obtain $u(t,x;M)\le v(t)$ by using again the comparison principle. Consequently, 
\begin{equation}\label{32}
\limsup_{t\to\infty}[u(t,x;M)-\alpha(t)]\le 0.
\end{equation}
Combining \eqref{31}, \eqref{32}, $U(t,-\infty)=\alpha(t)$ and the established local convergence of $u(t,x;M)$ to $U(t,x)$ in the variable $x-ct$, we obtain that for any $x_0\in\R$, 
\begin{equation}\label{33}
\limsup_{t\to\infty}|u(t,x;M)-U(t,x-ct)|=0 
\end{equation}
uniformly in $x-ct\le x_0$. 

Next we show that \eqref{33} holds uniformly in $x-ct\ge x_0$. Assume, for the sake of contradiction, that there exists $\delta_0>0$ and $t_n, x_n$ such that $x_n-ct_n\to+\infty$ and $u(t_n,x_n;M)=\delta_0$, where we have used the fact $U(t,+\infty)=0$. Let $[t_n/T]$ be the integer part of $t_n/T$.  Without loss of generality, we assume that $\lim_{n\to\infty} t_n-[t_n/T]T=t^*$. Define $w_n(t,x):=u(t+t_n,x+x_n;M)$. It then follows that $w_n$ converges locally uniformly in $(t,x)$ to some $w$, which is a nonnegative solution of
\begin{equation*}
w_t=w_{xx}+wg(t+t^*,+\infty, w).
\end{equation*}
Since $g(t,+\infty, s)$ is non-increasing in $s$ and periodic in $t$ and $\int_0^T g(t,-\infty,0)dt<0$, we can conclude that $w\equiv 0$. This leads to a contradiction to 
\[ 
w(0,0)=\lim_{n\to\infty}w_n(0,0)=\delta_0>0.
\]
Thus, \eqref{33} holds uniformly in $x\in \R$.
\end{proof}

The above lemma shows that the solutions of \eqref{main-eq} with ``large" initial datum converge to the forced KPP type wave uniformly when it exists.  Next we consider some kinds of ``small"  initial datum. 

\begin{lemma}\label{conv0}
Assume that $c\le -c^*$. If $|\phi|_{\infty}\le M$ for some $M>0$ and 
\begin{equation*}
\int_\R e^{-\f{c^*}{2}x}\phi(x)dx<+\infty,
\end{equation*} 
then  $\lim_{t\to\infty} u(t,x;\phi)=0$ uniformly in $x\in\R$.
\end{lemma}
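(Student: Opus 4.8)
The plan is to control the solution $u(t,x;\phi)$ from above by an explicit time-periodic super-solution whose profile decays in $x$ and decays to zero in $t$, and then invoke the comparison principle. Since $c\le -c^*$, the characteristic root $\lambda_{1,c}$ defined in \eqref{lambda1} is positive, and the relevant decay exponent for the limiting KPP equation at $-\infty$ is $\tfrac{c^*}{2}$. The exponential integrability hypothesis $\int_\R e^{-\f{c^*}{2}x}\phi(x)\,dx<+\infty$ is precisely the condition that makes a Gaussian-type (heat kernel) bound on $u$ available with the matching decay rate, so the initial datum sits underneath a super-solution built from that rate.

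First I would use (G2) to dominate the nonlinearity: since $g(t,x,u)\le g(t,-\infty,0)$ for all $(t,x,u)$, the solution $u$ is a sub-solution of the linear periodic equation $v_t=v_{xx}+v\,g(t,-\infty,0)$. By the comparison principle it then suffices to bound the solution $v(t,x;\phi)$ of this linear problem. The linear problem can be solved semi-explicitly: factoring out the periodic temporal factor $e^{\int_0^t g(s,-\infty,0)\,ds}$ reduces it to the pure heat equation, whose solution is the convolution of $\phi$ with the Gaussian kernel. The key quantity is then the average growth rate $\f{1}{T}\int_0^T g(t,-\infty,0)\,dt=\f{(c^*)^2}{4}$, by \eqref{def-cstar}.

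The heart of the estimate is a standard heat-kernel computation. I would write $v(t,x)\le C\,e^{\f{(c^*)^2}{4}t}\,(G_t * \phi)(x)$ where $G_t$ is the heat kernel, and then estimate the convolution against the weight $e^{-\f{c^*}{2}x}$. Completing the square in the Gaussian exponent, the combination $\tfrac{(c^*)^2}{4}t$ coming from the reaction exactly cancels the exponential factor $e^{(c^*)^2 t/4}$ that would otherwise be produced by shifting the Gaussian by the critical slope $\tfrac{c^*}{2}$. Concretely, one finds a bound of the form
\[
v(t,x)\le \f{C}{\sqrt{t}}\,e^{\f{c^*}{2}x}\int_\R e^{-\f{c^*}{2}y}\phi(y)\,dy,
\]
valid uniformly on any half-line, together with a complementary bound on the other region, and the prefactor $t^{-1/2}\to 0$ delivers the decay. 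The exponential integrability hypothesis is exactly what guarantees the $y$-integral is finite, so the right-hand side tends to $0$ uniformly in $x\in\R$ as $t\to\infty$.

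\textbf{The main obstacle} I anticipate is making the decay \emph{uniform in $x\in\R$} rather than merely locally uniform or pointwise: the critical exponent $\tfrac{c^*}{2}$ is exactly the boundary case, so the naive bound above only controls $v$ uniformly where $e^{\f{c^*}{2}x}$ stays bounded, i.e. on left half-lines. On the far right, $x\to+\infty$, the factor $e^{\f{c^*}{2}x}$ blows up and the above estimate is useless. To close this gap I would split $\R$ into two regions. On $x\le x_0$ (for $x_0$ large negative or even any fixed threshold), the displayed kernel estimate gives uniform decay. On $x\ge x_0$ I would instead use that the initial mass to the right is uniformly small, or combine with the boundedness $|\phi|_\infty\le M$ and a separate super-solution that does not rely on the weight — for instance, using the non-increasing structure of the Poincaré map (Lemma \ref{P}) to dominate $u$ by the tail of a shifted profile, exploiting that $g(t,+\infty,0)$ has negative average by (G4) so that solutions cannot persist far to the right. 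Reconciling these two regional estimates into a single uniform statement is the delicate point; everything else is a routine Gaussian calculation combined with the comparison principle.
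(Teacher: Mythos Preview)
Your displayed heat-kernel bound is off by a growing exponential in $t$. Writing $v(t,x)=e^{\int_0^t g(s,-\infty,0)\,ds}(G_t*\phi)(x)$ and tilting the Gaussian by the slope $\tfrac{c^*}{2}$, one picks up a factor $e^{(c^*)^2 t/4}$ from completing the square; the reaction contributes \emph{another} $e^{(c^*)^2 t/4}$ as its secular part, and the two add rather than cancel. The correct estimate is
\[
v(t,x)\;\le\; \frac{C}{\sqrt{t}}\,e^{\frac{c^*}{2}(x+c^*t)}\int_\R e^{-\frac{c^*}{2}y}\phi(y)\,dy,
\]
not the one you wrote. Thus the prefactor $t^{-1/2}$ alone does not give decay: the linear super-solution controls $u$ only on the \emph{moving} region $\{x+c^*t\le \xi_0\}$ (up to a logarithmic correction), never on a fixed half-line $\{x\le x_0\}$.

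This changes the nature of the region left uncovered. It is not ``$x\ge x_0$, far to the right'' but rather $\{x+c^*t>\xi_0\}$, which for large $t$ contains every bounded $x$; neither (G4) nor smallness of the right tail of $\phi$ is of any help there. The missing ingredient, which the paper supplies, is Lemma~\ref{uniformconv}: comparison with the constant initial datum $M\ge|\phi|_\infty$ yields $u(t,x;\phi)\le u(t,x;M)$, and the latter converges \emph{uniformly in $x$} to the forced KPP wave $U(t,x-ct)$, which satisfies $U(t,+\infty)=0$. The hypothesis $c\le -c^*$ enters precisely here, to make the two moving regions dovetail: if $x+c^*t>\xi_0$ then $x-ct\ge x+c^*t>\xi_0$, hence $U(t,x-ct)<\epsilon$. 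Without this forced-wave upper bound your argument cannot be closed on the intermediate zone where $x+c^*t$ is large but $x-ct$ is not yet large for the reason you propose.
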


As a remark on Lemma \ref{conv0}, we point out  that if the initial function $\phi$ has an exponentially decaying tail that is bigger than $xe^{-\f{c^*}{2}x}$, then $u(t,x;\phi)$ may converge to one of the pulse waves, which is stable in a certain sense. We refer to the construction of pulse waves in Lemma \ref{exIII}. 

\begin{proof}
By Lemma \ref{uniformconv}, we have 
\begin{equation}\label{34}
\limsup_{t\to\infty} [u(t,x;\phi)-U(t,x-ct)]\le 0,\quad \text{uniformly in $x\in\R$}.
\end{equation}
We use an argument in \cite{Bramson-83} to study the solution $v(t,x;\phi)$ of the following linear equation
\begin{equation*}
v_t=v_{xx}+vg(t,-\infty,0),\quad v(0,x)=\phi(x).
\end{equation*}
Thanks to $g(t,-\infty,0)\ge g(t,-\infty, s)\ge g(t,x,s)$, we obtain $u(t,x;\phi)\le v(t,x;\phi), \forall t\ge 0, x\in\R$. Note that $v(t,x)$ can be expressed explicitly as 
\begin{equation*}
v(t,x)=\f{e^{\int_0^t  g(s,-\infty,0)ds}}{\sqrt{4\pi t}}\int_\R e^{-\f{(x-y)^2}{4t}}\phi(y)dy.
\end{equation*}
Let $\sigma(t)$ be a function such that $\lim_{t\to\infty}\sigma(t)=+\infty$. It will be specified later. By direct computations, we then have
\begin{eqnarray*}
v(t,x-c^*t+\sigma(t)) &=& \f{e^{\int_0^t  g(s,-\infty,0)ds}}{\sqrt{4\pi t}}\int_\R e^{-\f{(x-c^*t+\sigma(t)-y)^2}{4t}}\phi(y)dy\nonumber\\
&=& \f{e^{\int_0^t  g(s,-\infty,0)ds}}{\sqrt{4\pi t}}\int_\R e^{-\f{(x+\sigma(t)-y)^2-2c^*t(x+\sigma(t)-y)+(c^*t)^2}{4t}}\phi(y)dy\nonumber\\
&\le & \f{e^{\int_0^t  g(s,-\infty,0)ds}}{\sqrt{4\pi t}}\int_\R e^{-\f{-2c^*t(x+\sigma(t)-y)+(c^*t)^2}{4t}}\phi(y)dy\nonumber\\
&= & \f{e^{\int_0^t  g(s,-\infty,0)ds-\f{(c^*)^2}{4}t+\f{c^*}{2}\sigma(t)}}{\sqrt{4\pi t}}e^{\f{c^*}{2}x}\int_\R e^{-\f{c^*}{2}y}\phi(y)dy.
\end{eqnarray*}
Recall that $c^*=2\sqrt{\f{1}{T}\int_0^T g(t,-\infty,0)dt}$. Consequently, $p(t):=e^{\int_0^t  g(s,-\infty,0)ds-\f{(c^*)^2}{4}t}$ is a positive periodic function. By the assumption that $\int_\R e^{-\f{c^*}{2}y}\phi(y)dy <+\infty$,
it follows  that there exists a constant $C>0$ such that 
\begin{equation*}
\limsup_{t\to\infty}v(t,x-c^*t+\sigma(t)) \le C e^{\f{c^*}{2}x},\quad \forall x\in\R
\end{equation*}
provided that $\f{e^{\f{c^*}{2}}\sigma(t)}{4\pi t}$ is uniformly bounded. For this purpose, we choose
\begin{equation*}
\sigma(t)=\f{2}{c^*}\ln t+C_1,
\end{equation*}
where $C_1$ is a constant. As such, we obtain 
\begin{equation}\label{35}
\limsup_{t\to\infty} \left[u(t,x;\phi)-Ce^{\f{c^*}{2}(x+c^*t-\sigma(t))}\right]\le 0,\quad \text{uniformly in $x\in\R$}.
\end{equation}

Combining \eqref{34} and \eqref{35}, we can infer the conclusion. Indeed, for any $\epsilon>0$ there exists $\xi_0=\xi_0(\epsilon)$ and $t_0=t_0(\xi_0,\epsilon)$ such that 
\begin{equation*}
U(t,x-ct)\le \epsilon, \quad \forall x-ct\ge\xi_0,
\end{equation*} 
and
\begin{equation*}
Ce^{\f{c^*}{2}(\xi_0-\sigma(t))}\le \epsilon, \quad \forall t\ge t_0
\end{equation*}
thanks to $\lim_{t\to\infty}\sigma(t)=+\infty$. Therefore, in view of $c\le -c^*$, we have
\begin{equation*}
\min\{U(t,x-ct), Ce^{\f{c^*}{2}(\xi_0-\sigma(t))}\}\le \epsilon, \quad \forall t\ge t_0.
\end{equation*}
This completes the proof.
\end{proof}

\begin{remark}\label{remark1}
Similar to \eqref{35}, there exists a constant $C>0$ such that
$\limsup_{t\to\infty} [u(t,x;\phi)-Ce^{-\f{c^*}{2}(x-c^*t+\sigma(t))}]\le 0$ uniformly in $x\in\R$  
provided that $\int_\R e^{\f{c^*}{2}y}\phi(y)dy<+\infty$.
\end{remark}

\begin{lemma}\label{moderatespeed}
Assume that $c\in (-c^*, c^*)$. Then $\lim_{t\to\infty}\sup_{x\ge -\mu t}|u(t,x;\phi)-U(t,x-ct)|=0$ for any $\mu\in (0,c^*)$.
\end{lemma}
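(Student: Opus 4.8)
The plan is to squeeze $u(t,x;\phi)$ between the forced wave $U(t,x-ct)$ from above and a spreading lower barrier, exploiting that the left-hand limiting equation \eqref{limiting} is of KPP type with spreading speed $c^*>\mu$. First I would record the upper bound: choosing $M\ge\max\{\alpha(0),\sup_{\R}\phi\}$ and combining the comparison principle with Lemma \ref{uniformconv} gives $\limsup_{t\to\infty}[u(t,x;\phi)-U(t,x-ct)]\le0$ uniformly in $x\in\R$, exactly as in \eqref{34}. Fix $\epsilon>0$. Since $U(t,+\infty)=0$ (Lemma \ref{est}) and $U(t,-\infty)=\alpha(t)$ uniformly in $t$ by \eqref{BC1}, pick $\xi_0>0$ with $U(t,\xi)<\epsilon$ for $\xi\ge\xi_0$ and $\xi_1>0$ with $U(t,\xi)>\alpha(t)-\epsilon$ for $\xi\le-\xi_1$. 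On the far-right part $\{x-ct\ge\xi_0\}$ the upper bound together with $u\ge0$ already yields $|u-U|\le2\epsilon$, so it remains to produce a matching lower bound on $\{-\mu t\le x,\ x-ct\le\xi_0\}$.

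The lower bound splits into a bounded-$\xi$ piece and a far-left piece. For the bounded-$\xi$ piece, after time $1$ the strong maximum principle gives $u(1,\cdot;\phi)>0$, so the compactly supported generalized sub-solution built in the proof of Lemma \ref{ex1} (the case $c\in(-c^*,c^*)$) can be scaled to lie below $u(1,\cdot;\phi)$. Iterating the Poincar\'e map $P_0$ on this sub-solution and using its monotonicity (Lemma \ref{P}) together with the convergence established in the proof of Lemma \ref{ex1}, whose limit must be $U$ by uniqueness (Lemma \ref{uniq}), I obtain $\liminf_{t\to\infty}[u(t,\xi+ct;\phi)-U(t,\xi)]\ge0$ locally uniformly in $\xi$. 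Combined with the upper bound, this closes the estimate on $\{-\xi_1\le x-ct\le\xi_0\}$.

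The main obstacle is the far-left piece $\{x-ct\le-\xi_1,\ x\ge-\mu t\}$, where I must show $u(t,x;\phi)\ge\alpha(t)-\epsilon$; this set is eventually empty unless $c+\mu>0$, which I therefore assume. Here I would invoke the spreading speed $c^*$ of \eqref{limiting}: fix $\mu'\in(\mu,c^*)$ and $\delta>0$ small so that the perturbed KPP equation $w_t=w_{xx}+w[g(t,-\infty,w)-\delta]$ has spreading speed $2\sqrt{(c^*/2)^2-\delta}>\mu'$ and a stable positive periodic state $\alpha_\delta(t)\ge\alpha(t)-\epsilon$, and fix $R$ so large that $g(t,x,u)\ge g(t,-\infty,u)-\delta$ whenever $x-ct\le-R$ and $u$ lies in the relevant range. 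In the favorable region $\{x-ct\le-R\}$ the actual nonlinearity dominates the perturbed KPP one, so a generalized sub-solution of the perturbed equation that is confined to this region and spreads leftward at speed $\mu'$ is also a sub-solution of \eqref{main-eq}. Seeding it below the positive plateau supplied by the bounded-$\xi$ step and comparing, I get $u\ge\alpha_\delta-\epsilon\ge\alpha-2\epsilon$ on $\{-\mu t\le x\le ct-\xi_1\}$.

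The delicate point, and the one I expect to require the most care, is that the spreading sub-solution must be truncated so as to remain inside the moving favorable region $\{x\le ct-R\}$; this is exactly where the hypotheses $\mu<\mu'<c^*$ and $c+\mu>0$ enter, since they guarantee that the leftward front $x=-\mu't$ stays to the left of $x=ct-R$ for all large $t$, so that the confined sub-solution never leaks into the region where the comparison fails. Assembling the three regions and letting $\epsilon\downarrow0$ yields $\lim_{t\to\infty}\sup_{x\ge-\mu t}|u(t,x;\phi)-U(t,x-ct)|=0$.
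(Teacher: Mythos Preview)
Your strategy matches the paper's: an upper barrier via Lemma~\ref{uniformconv}, a split according to the value of $\xi=x-ct$, and a KPP spreading comparison on the far-left piece. The paper uses only two zones (it folds your far-right zone into a uniform-in-$\{\xi\ge -x_0\}$ convergence argued as in Lemma~\ref{uniformconv}), and on the far-left zone it compares $u$ directly with the global solution $w$ of the frozen equation $w_t=w_{xx}+wg(t,-x_0,w)$, citing its spreading speed $c_0=2\sqrt{T^{-1}\int_0^T g(t,-x_0,0)\,dt}>c^*-\epsilon/2$ to conclude $w\to\alpha_0(t)>\alpha(t)-\eta$ on $|x|\le(c^*-\epsilon)t$.

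Your confined-sub-solution formulation is in fact more scrupulous: the paper's inequality $u\ge w$ is only justified where $x-ct\le -x_0$, and the moving boundary is not discussed there, whereas your Dirichlet-type truncation at $x=ct-R$ makes the comparison honest. One small slip in your write-up: the condition $-\mu' t<ct-R$ that you flag is not what prevents leakage to the right (the truncation does that by construction); rather, it guarantees that the confined region is wide enough for the sub-solution to spread and fill the target interval $[-\mu t,\,ct-\xi_1]$. You should also make explicit that $\xi_1$ is taken large relative to $R$ and to the boundary layer of the half-line steady state, so that the sub-solution has already reached level $\alpha_\delta-\epsilon$ by $\xi=-\xi_1$; this is consistent with your other choices since $\xi_1$ can be enlarged freely once $\epsilon$ is fixed.
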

\begin{proof}
Without loss of generality, we assume that $\epsilon\in(0,|c-c^*|)$ is fixed. By definition, it suffices to show that for any $\eta>0$ there exists $t_0>0$ such that 
\begin{equation*}
\sup_{x\ge -(c^*-\epsilon)t} |u(t,x;\phi)-U(t,x-ct)|<\eta,\quad t\ge t_0.
\end{equation*}
We proceed with two regions: (i) $x\ge -x_0+ct$; (ii) $x\in [-(c^*-\epsilon)t, -x_0+ct]$, where $x_0$ is specified below. Since $U(t,-\infty)=\alpha(t)$ and $\alpha'(t)=\alpha(t) g(t,-\infty, \alpha(t))$, it follows that for any $\eta>0$, there exists $x_0>0$ such that
\begin{equation*}
U(t,x)>\alpha(t)-\eta/2,\quad \forall t\in\R, x\le -x_0
\end{equation*}
and
\begin{equation*}
c_0:=2\sqrt{\f{1}{T}\int_0^T g(t,-x_0,0)dt }>c^*-\epsilon/2.
\end{equation*}
By \cite[Theorem 5.2.1]{ZhaoBook}, the ordinary differential equation
\begin{equation*}
v'(t)=vg(t,-x_0,v)
\end{equation*} 
admits a unique positive periodic solution $\alpha_0(t)$ with 
$\alpha_0(t)>\alpha(t)-\eta,\, \forall t\in\R$.

(i) $x\ge -x_0+ct$. By the similar arguments to the proof of Lemma \ref{uniformconv}, we obtain $u(t,x;\phi)$ converges to $U(t,x-ct)$ uniformly in the variable $\xi:=x-ct\ge -x_0$. Therefore, such $t_0$ can be found.

(ii) $x\in [-(c^*-\epsilon)t, -x_0+ct].$  In this region, it is clear that $U(t,x-ct)\ge U(t,-x_0)\ge \alpha(t)-\eta/2$ for all $t\ge 0$. It then suffices to find $t_0>0$ such that 
\[
\sup_{x\in [-(c^*-\epsilon)t, -x_0+ct]} |u(t,x;\phi)-\alpha(t)|<\eta/2,\, \,   \forall t\ge t_0. 
\]
Indeed, since $x\le -x_0+ct$ we have $g(t,x-ct,s)\ge g(t,-x_0,s)$. Consequently, the comparison principle implies that $u(t,x;\phi)\ge w(t,x;\phi)$ for all $x\in  [-(c^*-\epsilon)t, -x_0+ct] $, where $w$ solves
\begin{equation*}
w_t=w_{xx}+wg(t,-x_0,w),\quad w(0,x)=\phi(x).
\end{equation*}
It is known that the spreading speed of $w(t,x;\phi)$ is $c_0$. In particular, 
\begin{equation*}
\lim_{t\to\infty}\sup_{|x|\le \mu t}|w(t,x;\phi)-\alpha_0(t)|=0, \mu\in (0,c_0).
\end{equation*}
Choose $\mu=c^*-\epsilon$. Then  we have $\mu<c_0$, and hence, there exists $t_0$ such that 
\[
\alpha(t)+\eta/2\ge u(t,x;\max_{x\in\R}\phi(x))\ge u(t,x;\phi)\ge w(t,x;\phi)\ge \alpha(t)-\eta/2, \, \, \forall t\ge t_0. 
\]
This completes the proof.
\end{proof}

\begin{lemma}\label{largespeed}
Assume that $c\ge c^*$. Then $\lim_{t\to\infty}\sup_{|x|\le \mu t}|u(t,x;\phi)-\alpha(t)|=0$ for any $\mu\in (0,c^*)$. If, in addition, $\int_\R e^{-\f{c^*}{2}x}\phi(x)dx<+\infty$, then $\lim_{t\to\infty}\sup_{|x|\ge c^* t-\mu \ln t}u(t,x;\phi)=0$ for any $\mu<\f{2}{c^*}$.
\end{lemma}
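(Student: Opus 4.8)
The plan is to squeeze $u(t,x;\phi)$ between barriers adapted to each region, handling the interior $|x|\le\mu t$ and the two far fields separately. For the upper half everywhere, note that by (G2) $g(t,x-ct,u)\le g(t,-\infty,u)$, so the spatially constant solution $v(t)$ of \eqref{ODE} with $v(0)=\sup_x\phi$ is a super-solution of \eqref{main-eq}; since $\alpha$ is globally stable for \eqref{ODE}, $v(t)\to\alpha(t)$ and the comparison principle gives $\limsup_{t\to\infty}\sup_{x\in\R}[u(t,x;\phi)-\alpha(t)]\le0$. This settles the upper inequality in $|x|\le\mu t$ and will be reused for the tails.

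For the interior lower bound, fix $\mu\in(0,c^*)$ and $\eta>0$ and, exactly as in Lemma \ref{moderatespeed}, choose $x_0>0$ so large that $c_0:=2\sqrt{\f1T\int_0^Tg(t,-x_0,0)\,dt}\in(\mu,c^*)$ and the positive periodic solution $\alpha_0$ of $v'=vg(t,-x_0,v)$ satisfies $\alpha_0>\alpha-\eta$. On the expanding half-line $\{x<ct-x_0\}$ one has $g(t,x-ct,\cdot)\ge g(t,-x_0,\cdot)$; since $u(t,\cdot)>0$ for every $t>0$, I would fix a small $t_1>0$ and let $w$ solve $w_t=w_{xx}+wg(t,-x_0,w)$ for $t\ge t_1$ with a positive datum below $u(t_1,\cdot)$ supported in the favorable zone and a null condition on the moving boundary $x=ct-x_0$. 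Then $w$ is a sub-solution of \eqref{main-eq} there and comparison yields $u\ge w$. The decisive point is $c_0<c^*\le c$: the boundary recedes strictly faster than the natural front of $w$, so it does not impede propagation and $w$ spreads at its full speed $c_0$, whence $\sup_{|x|\le\mu t}|w-\alpha_0|\to0$ by the spreading-speed theory of \cite{LYZ2006}. Combined with the upper barrier and $\eta\downarrow0$, this gives $\sup_{|x|\le\mu t}|u(t,x;\phi)-\alpha(t)|\to0$.

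For the far fields I would treat the two tails by different mechanisms. The far-left tail follows the argument in Lemma \ref{conv0}: bounding $u$ by the solution of $v_t=v_{xx}+vg(t,-\infty,0)$ and running the Gaussian estimate with $\sigma(t)=\f2{c^*}\ln t+C_1$, the hypothesis $\int_\R e^{-\f{c^*}{2}x}\phi\,dx<+\infty$ gives $\limsup_{t\to\infty}[u-Ce^{\f{c^*}{2}(x+c^*t-\sigma(t))}]\le0$, and since $\mu<\f2{c^*}$ the exponent tends to $-\infty$ on $x\le-c^*t+\mu\ln t$, proving decay there. For the far-right tail the favorable linear comparison is too crude where $\phi$ does not decay at $+\infty$, so I would exploit the shift: ahead of the environmental front the reaction is eventually unfavorable, $g(t,x-ct,u)\le g(t,x-ct,0)$ with $\f1T\int_0^Tg(t,+\infty,0)\,dt<0$, and I would build a moving super-solution by patching a Bramson-type barrier of the form $(A+B(c^*t-x))e^{-\f{c^*}{2}(x-c^*t)}\psi(t)$, which is $o(1)$ beyond $x=c^*t-\mu\ln t$ precisely when $\mu<\f2{c^*}$, with the exponential barrier $Me^{\lambda_{1,c}(x-ct)}\psi(t)$ from Lemma \ref{nonex} in the unfavorable zone, arguing that it dominates $u$ after the transient in which the advancing unfavorable environment damps the mass that $\phi$ carries to the right.

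The genuine difficulty is this far-right decay. For $c>c^*$ the species' own front advances only at the KPP speed $c^*$ (with the $\f2{c^*}\ln t$ delay), while the environmental front runs at $c$, so there is an intermediate favorable zone that the population has not yet invaded; one must show quantitatively that whatever mass the initial datum leaves there is suppressed by the environment before the favorable region overtakes it, and that the log-delayed barrier indeed dominates $u$ uniformly up to $x=c^*t-\mu\ln t$. Matching these two competing fronts, and thereby pinning the threshold $\mu<\f2{c^*}$, is the step I expect to demand the most care.
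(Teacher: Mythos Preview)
Your treatment of the interior region $|x|\le\mu t$ and of the far-left tail $x\le -c^*t+\mu\ln t$ follows the paper: the upper barrier is the ODE solution, the lower barrier in the interior is obtained by comparison with the homogeneous equation $w_t=w_{xx}+wg(t,-x_0,w)$ (the paper simply refers back to part~(ii) of Lemma~\ref{moderatespeed}), and the far-left decay is exactly the Gaussian estimate \eqref{35} from Lemma~\ref{conv0}. Your half-line formulation of the interior comparison, with zero Dirichlet data on the moving boundary $x=ct-x_0$, is in fact tidier than the whole-line comparison the paper writes, since $u\ge 0=w$ on that boundary is then automatic.

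Where you diverge from the paper is the far-right tail $x\ge c^*t-\mu\ln t$. The paper's argument is a single line: it invokes Remark~\ref{remark1}, the mirror of \eqref{35}, which under the hypothesis $\int_\R e^{\f{c^*}{2}y}\phi(y)\,dy<\infty$ yields
\[
\limsup_{t\to\infty}\Big[u(t,x;\phi)-Ce^{-\f{c^*}{2}(x-c^*t+\sigma(t))}\Big]\le 0,\qquad \sigma(t)=\tfrac{2}{c^*}\ln t+C_1.
\]
At $x=c^*t-\mu\ln t$ the exponent equals $(\f{c^*\mu}{2}-1)\ln t+O(1)$, which tends to $-\infty$ exactly when $\mu<\f{2}{c^*}$. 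So with the symmetric right-tail integrability condition (which the paper evidently intends, though the lemma as stated only lists the left-tail one), the far-right decay is immediate --- no patching is needed.

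Your construction aimed at avoiding that hypothesis does not work as written. The expression $(A+B(c^*t-x))e^{-\f{c^*}{2}(x-c^*t)}\psi(t)$ evaluated at $x=c^*t-\mu\ln t$ equals $(A+B\mu\ln t)\,t^{c^*\mu/2}\psi(t)$, which diverges for every $\mu>0$; the threshold $\f{2}{c^*}$ does not appear. What produces that threshold in the paper's argument is the $t^{-1/2}$ heat-kernel prefactor in the integral representation of the linear majorant, not any pointwise barrier of this shape. Likewise, the barrier $Me^{\lambda_{1,c}(x-ct)}\psi(t)$ from Lemma~\ref{nonex} has $\lambda_{1,c}(x-ct)=\lambda_{1,c}\big((c^*-c)t-\mu\ln t\big)\to+\infty$ at $x=c^*t-\mu\ln t$ for every $c\ge c^*$, so it too blows up there; patching the two cannot yield a super-solution that is small throughout $x\ge c^*t-\mu\ln t$. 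Your instinct that the far-right tail is delicate without right-tail decay of $\phi$ is sound, but the fix you sketch does not close the gap.
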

\begin{proof}
The first part can be proved by the comparison arguments similar to the proof of Lemma \ref{moderatespeed} (ii). The second part follows from \eqref{35} and Remark \ref{remark1}. Here we omit the details. 
\end{proof}

\begin{lemma}\label{exponential}
Assume that there exists $\delta>0$ such that $g_u(t,\xi, u)<-\delta$ for all $t\in [0,T],\xi\in\R$ and $u\in [-\delta, \delta+ \max_{t\in[0,T]}\alpha(t)]$. Define
 \begin{equation}
 w^{\pm}(t,x):=(1\pm \rho e^{-\sigma t})U(t,x-ct)\pm \sigma\rho e^{-\sigma t} p(t,x-ct),
 \end{equation} 
where $p(t,x)$ is a positive time periodic function. Then $w^\pm$ is a pair of sup- and sub-solutions of \eqref{main-eq} if positive numbers $\rho, \sigma$ and $p(t,x)$ are appropriately chosen. 
\end{lemma}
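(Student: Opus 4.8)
The plan is to verify directly that $w^+$ is a supersolution and $w^-$ is a subsolution of \eqref{main-eq}, by substituting them into the operator $\mathcal{N}[w]:=w_t-w_{xx}-cw_x-wg(t,x-ct,w)$ and showing the sign is correct for suitable $\rho,\sigma$ and $p$. Working in the moving frame $\xi=x-ct$ so that $U=U(t,\xi)$ satisfies \eqref{TW}, I would first expand $\mathcal{N}[w^\pm]$ using the fact that $U$ exactly solves the wave equation. The $U$-terms involving $U_t-U_{xx}-cU_x$ collapse to $Ug(t,\xi,U)$, leaving a remainder that is linear in the perturbation $\rho e^{-\sigma t}$ plus the nonlinear discrepancy coming from evaluating $g$ at $w^\pm$ rather than at $U$.

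The key step is a first-order Taylor expansion of the reaction term. Writing $w^\pm=(1\pm\rho e^{-\sigma t})U\pm\sigma\rho e^{-\sigma t}p$, I would expand
\[
w^\pm g(t,\xi,w^\pm)=w^\pm g(t,\xi,U)+w^\pm g_u(t,\xi,\theta^\pm)(w^\pm-U)
\]
for some intermediate value $\theta^\pm$. The a priori bounds $0<U<\alpha(t)$ from Lemma \ref{est}, together with boundedness of $p$ and smallness of $\rho e^{-\sigma t}$, keep $\theta^\pm$ inside the interval $[-\delta,\delta+\max_t\alpha(t)]$ where the hypothesis $g_u<-\delta$ applies. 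The strict negativity of $g_u$ is exactly what makes the subhomogeneity-type term $(w^\pm-U)g_u$ have the favorable sign: for the supersolution the positive multiple of $U$ in $w^+-U$ feeds into a negative $g_u$, producing a term that helps push $\mathcal{N}[w^+]\ge 0$, and symmetrically for $w^-$.

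After the Taylor step, collecting the $O(\rho e^{-\sigma t})$ terms, the supersolution inequality reduces to requiring that $p$ satisfy a differential inequality of the form
\[
p_t-p_{xx}-cp_x-pg(t,\xi,U)-\sigma p+(\text{bounded coefficient})\,U p\ge 0,
\]
where the bounded coefficient comes from $U g_u$. The hard part will be choosing $p$ and $\sigma$ so that this inequality holds uniformly in $\xi\in\R$, in particular handling the two tails $\xi\to\pm\infty$ where $U\to\alpha(t)$ and $U\to0$ respectively: near $+\infty$ the relevant linearized coefficient approaches $g(t,+\infty,0)$, which has negative time-average by (G4), while near $-\infty$ it approaches $g(t,-\infty,\alpha(t))+\alpha g_u$, and one must exploit $g_u<-\delta$ to dominate the $\sigma p$ loss. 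I would construct $p$ as a positive periodic function behaving like the exponential $e^{\int_0^t a_\pm(s)ds}$ with zero-average exponents in each tail (mirroring the construction of $p$ in the proof of Lemma \ref{uniq}), glued smoothly across a bounded $\xi$-region, and then pick $\sigma>0$ small enough that the uniform gap provided by $\delta$ absorbs the $-\sigma p$ term. Finally I would take $\rho>0$ small enough that $w^-\ge0$ and both functions stay in the admissible range, completing the verification.
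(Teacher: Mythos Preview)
Your overall strategy matches the paper's proof: substitute $w^\pm$ into the equation, use that $U$ solves \eqref{TW}, Taylor-expand the reaction term to exploit $g_u<-\delta$, and reduce everything to a differential inequality for $p$ which is then built piecewise with distinct behaviors in the two tails. The paper carries this out in essentially the same way, splitting $\xi$ into $\xi\le\xi_0$, $\xi\ge\xi_0+1$, and the compact transition interval.

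There is, however, a concrete omission in your reduced inequality for $p$. Differentiating $w^+$ in $t$, the factor $\rho e^{-\sigma t}$ multiplying $U$ produces a term $-\sigma\rho e^{-\sigma t}U$; after dividing through by $\sigma\rho e^{-\sigma t}$ this leaves an \emph{inhomogeneous} contribution $-U$. Symmetrically, the Taylor step on $-w^+g_u(\theta^+)(w^+-U)$ yields not only the multiplicative piece $\delta Up$ you record but also $\delta U^2/\sigma$. The inequality that actually has to hold is (in the paper's organization)
\[
p_t-p_{\xi\xi}-cp_\xi-[\sigma+g(t,\xi,0)]p+\frac{\delta U^2-\sigma U}{\sigma}\ \ge\ 0,
\]
with a forcing term independent of $p$. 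Near $\xi\to+\infty$ this forcing is $\approx -U<0$ and works against you; this is precisely why the paper defines $p$ on the right tail as the positive periodic solution of the \emph{inhomogeneous} linear ODE $v_0'=[\sigma_0+g(t,\xi_0+1,0)]v_0+U(t,\xi_0+1)$ rather than a pure exponential $e^{\int_0^t a_+}$ as in Lemma \ref{uniq}. Your proposed construction can be salvaged (e.g.\ by scaling $p$ large enough on the right so that the strictly positive gap coming from (G4) absorbs the bounded $-U$ term), but the inequality you display is not the one that needs to be verified, and the purely homogeneous $p$ you suggest will not satisfy the correct one without this additional adjustment.
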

\begin{proof}

Let $\xi_0$ satisfy $\int_0^Tg(t,\xi_0,0)dt=\f{1}{2}\int_0^T g(t,+\infty,0)<0$. Define $\sigma_0:=-\f{1}{4}\int_0^Tg(t,\xi_0+1,0)dt>0$. Now we are ready to construct $p$. Let $v_0(t)$ be the unique positive periodic solution of $v'(t)=[\sigma_0+g(t,\xi_0+1,0)]v(t)+U(t,\xi_0+1)$. Define $\sigma_1:=\min\{1, \sigma_0, \f{\delta}{2}\min_{t\in[0,T]}U(t,\xi_0)\}$. Let $v_1(t)$ be the unique positive periodic solution of $v'(t)=[\sigma_1+g(t,-\infty,0)]v(t)+\f{\sigma_1-\delta U(t,\xi_0)}{\sigma_1}U(t,\xi_0)$. Let $p\in C^2$ be a positive function such that $p(t+T,x)=p(t,x)$ and $p(t,x)\equiv v_0(t)$ for  $x\ge \xi_0+1$ and $p(t,x)\equiv v_1(t)$ for $x\le \xi_0$.  Choose $\rho_0$ such that $\rho_0(|U|_{\infty}+|p|_\infty)<\delta$. It then follows that  for any $\rho\le \rho_0$,
\begin{eqnarray*}
\mc{L}w^+&&:=-w^+_t+w^+_{xx}+w^+g(t,x-ct,w^+)\nonumber\\
&&=(1+\rho e^{-\sigma t})U[g(t,\xi, w^+)-g(t,\xi,U)]\nonumber\\
&&\quad +\sigma \rho e^{-\sigma t}[-p_t+p_{\xi\xi}+cp_{\xi}+\sigma p+g(t,\xi,w^+)p] +\sigma \rho e^{-\sigma t} U         \nonumber\\
&& \le -\delta U (w^+-U)+\sigma \rho e^{-\sigma t}[-p_t+p_{\xi\xi}+cp_{\xi}+\sigma p+g(t,\xi,0)p] +\sigma \rho e^{-\sigma t} U\nonumber\\
&&\le \sigma \rho e^{-\sigma t}\left\{ -p_t+p_{\xi\xi}+cp_{\xi}+[\sigma+g(t,\xi,0)]p+\f{ -\delta U^2+\sigma U}{\sigma} \right\},\quad \xi:=x-ct.
\end{eqnarray*}
By the above constructions we see that $\mc{L}(w^+)\le 0$ for $\sigma\le \sigma_1$ and $\xi\not \in [\xi_0, \xi_0+1]$. Define
\[
\sigma_2:=\f{\delta (U(t,\xi_0+1))^2}{U(t,\xi_0)+\max_{\xi\in [\xi_0,\xi_0+1]}\{|p_t|+|p_{\xi\xi}|+c|p_\xi|+(\sigma_1+|g(t,\xi,0)|)p\}}.
\]
Then we have $\mc{L}(w^+)\le 0$ for all $\sigma\le \min\{\sigma_1,\sigma_2\}$ and $\xi\in [\xi_0,\xi_0+1]$. Thus, $w^+$ is a super solution for $\sigma\le \min\{\sigma_1,\sigma_2\}$ and $\rho\le \rho_0$.  Similarly, $w^-$ can be proved to be a sub-solution of \eqref{main-eq} for small $\rho$ and $\sigma$.
\end{proof}

Now we are in a position to prove Theorem  \ref{ThSS}.

\

\noindent
{\it Proof of Theorem \ref{ThSS}. }  By Lemma \ref{moderatespeed}, it follows that that \eqref{111}  holds. From Lemmas \ref{conv0} and \ref{largespeed}, we see that the fist two items and  \eqref{222} in the third item of Theorem \ref{ThSS} hold.  It then remains to prove that \eqref{333} holds. Indeed, we claim that 
\begin{equation}\label{claim-eq}
\lim_{t\to\infty}\sup_{x\in\R}|u(t,x)-U(t,x-ct)|=0.
\end{equation}
Let us postpone the proof of \eqref{claim-eq} and reach \eqref{333} in a few lines.  In view of \eqref{claim-eq} and Lemma \ref{exponential}, we apply the parabolic comparison principle to conclude that there exist $t_0>0$ and $C>0$ such that 
\begin{equation}\label{est111}
|u(t,x)-U(t,x-ct)|\le C e^{-\sigma t}, \quad \forall t\ge t_0.
\end{equation}

Now we return to the proof of the limit equality  \eqref{claim-eq}. Recall that $c\in (-c^*, c^*)$ and $\liminf_{x\to-\infty}u(0,x)>0$. Fix $\mu\in(-c, c^*)$. 
In view of  Lemma \ref{moderatespeed}, we only need to prove that 
\[
\lim_{t\to\infty}\sup_{x\le -\mu t}|u(t,x)-U(t,x-ct)|=0.
\]
Indeed, since $U(t,\xi)$ is periodic in $t\in\R$ and decreasing in $\xi\in\R$ with $U(t,-\infty)=\alpha(t)$, it follows that
\begin{eqnarray*}
\sup_{x\le -\mu t}|\alpha(t)-U(t,x-ct)|&&= \alpha(t)-\inf_{x\le -\mu t}U(t,x-ct)\nonumber\\
&&= \alpha(t)-\inf_{y\le 0} U(t, y-(\mu+c)t)\nonumber\\
&&\le \alpha(t)-U(t,-(\mu+c)t),
\end{eqnarray*}
which, together with the inequality $\mu+c>0$, the monotonicity of $U(t,\xi)$ in $\xi$ and the limit $U(t,-\infty)=\alpha(t)$, implies that 
\[
\lim_{t\to\infty}\sup_{x\le -\mu t}|\alpha(t)-U(t,x-ct)|=0.
\]
By the triangular inequality, it then suffices to prove 
\[
\lim_{t\to\infty}\sup_{x\le -\mu t}|u(t,x)-\alpha(t)|=0.
\]
Indeed, fix $\delta\in (0, \min\{c^*-c, \mu+c\})$. Let $x_0>0$ be specified later. Then we have 
\begin{eqnarray*}
\limsup_{t\to\infty}\sup_{x\le -\mu t}|u(t,x)-\alpha(t)| &&\le \limsup_{t\to\infty}\sup_{x\le -(\mu-\delta) t-x_0}|u(t,x)-\alpha(t)|\nonumber\\
&&=\limsup_{t\to\infty}\sup_{x\le -x_0}|u(t,x-(\mu-\delta) t)-\alpha(t)|.
\end{eqnarray*}
Set $w(t,x):= u(t,x-(\mu-\delta) t)$. Then $w$ solves
\begin{equation}\label{w-eq}
w_t=w_{xx}-(\mu-\delta)w_x+wg(t,x-(\mu-\delta+c)t, w)
\end{equation}
with $\liminf_{x\to-\infty}w(0,x)=\liminf_{x\to-\infty}u(0,x)>0$. Choose $M>\sup_{x\in\R} w(0,x)$. Then $\bar{w}(t)$, the solution of $\bar{w}'=\bar{w}g(t,-\infty, \bar{w})$ with $\bar{w}(0)=M$, is a 
super-solution of \eqref{w-eq} thanks to (G2). Consequently, the comparison principle implies that $w(t,x)\le \bar{w}(t)$ for all $t$ and $x$. Hence, 
\begin{equation*}
\limsup_{t\to\infty} \sup_{x\le -x_0}[w(t,x)-\alpha(t)]\le \limsup_{t\to\infty}[ \bar{w}(t)-\alpha(t)]=0.
\end{equation*}
It then remains to show that $\limsup_{t\to\infty} \sup_{x\le -x_0}[\alpha(t)-w(t,x)]\le 0$, that is, for any $\epsilon>0$ there exists $x_0>0$ and $t_0>0$ such that 
\begin{equation}\label{ineq-below}
 \inf_{x\le -x_0}w(t,x)>\alpha(t)-\epsilon,\quad t\ge t_0.
\end{equation}

We construct a family of sub-solutions to prove \eqref{ineq-below}. For any $\epsilon>0$, there exists $\gamma>0$ such that $\chi_\gamma(t)>\alpha(t)-\epsilon/2$, where $\chi_\gamma(t)$ is the unique positive periodic solution of $\chi'=\chi[g(t,-\infty,\chi)-\gamma]$. For the above $\gamma$ and $w(0,x)$, there exists $x_0>0$ such that
\[
g(t,-x_0,s)\ge g(t,-\infty, s)-\gamma, \quad t\in\R, s\in[0,\sup_{x\in\R} w(0,x)]
\] 
and 
\[
w(0,x)\ge \f{1}{2}\inf_{x\to-\infty}w(0,x),\quad x\le -x_0.
\]
For all $t\ge nT, x\le -x_0+(\mu-\delta+c)nT$ and $s\in[0,\sup_{x\in\R} w(0,x)]$, we have
\begin{equation*}
g(t,x-(\mu-\delta+c)t,s)\ge g(t,-x_0,s)\ge g(t,-\infty, s)-\gamma.
\end{equation*}

Let $x_n:=x_0 -(\mu-\delta+c)nT,\, \, \forall n \geq 0$, and  $\theta_n$ and $L$ be some positive numbers that will be specified later. Define
\begin{equation}\label{semi-line}
\phi^n(x):=\begin{cases}
\theta_n e^{(\mu-\delta) (x+x_n)}\sin\f{\pi}{L}(x+x_n+2L), & x\in [-x_n-L, -x_n],\nonumber\\
\theta_n e^{(\mu-\delta) (-L)}, & x< -x_n-L.
\end{cases}
\end{equation}
Consider a family of semi-line problems
 \begin{equation}
\begin{cases}
w_t^n=w_{xx}^n-(\mu-\delta)w_x^n+w^n[g(t,-\infty, w^n)-\gamma], &t>nT, x\le -x_n,\\
w^n(t,x)=0, &t>nT, x=-x_n.
\end{cases}
\end{equation}
By direct computations and the comparison principle, one may check that for all sufficiently small $\theta_n$, $w^n(nT+s,x; \phi^n)$ is convergent to the unique positive time-periodic solution $w^{n,*}(t,x)$ of \eqref{semi-line}. Moreover, $w^{n,*}(t,x)$ is non-increasing in $x$, $w^{n,*}(t, -\infty)=\chi_\gamma(t)$ and $w^{n,*}$ is increasing in $n$. Passing $n\to\infty$, we obtain that $w^*(t,x):=\lim_{n\to\infty} w^{n,*}(t,x)$ is a solution of the whole line problem 
\begin{equation*}
w_t^*=w_{xx}^*-(\mu-\delta)w_x^*+w^*[g(t,-\infty, w^*)-\gamma],  \,  \,  \forall x\in\R
\end{equation*} 
with
\begin{equation*}
w^*(t,-x_0):=\lim_{n\to\infty} w^{n,*}(t,-x_0)\ge w^{1,*}(t, -x_0)>0.
\end{equation*}
This, together with  Louville's theorem, leads to $w^*(t,x)\equiv \chi_\gamma(t)$, due to the fact that $-(\mu-\delta)\in (-c^*,c^*)$.
Meanwhile, for sufficiently small $\theta_n$ we have $w(nT,x)\ge \phi^n(x)$ for $x\le -x_n$. By using the comparison principle again, we then obtain
\begin{equation*}
\liminf_{n\to\infty}\sup_{x\le -x_0} w(nT+s,x)\ge \liminf_{n\to\infty}\sup_{x\le -x_0} w^n(nT+s, x)\ge  \lim_{n\to\infty} w^n(s, -x_0)=\chi_\gamma(s)
\end{equation*}
uniformly for $s\in[0,T]$. Hence, \eqref{ineq-below} is proved.

\section{An application}

As an application, we consider the following susceptible-infectious-susceptible epidemic model
\begin{equation}\label{SIS}
\begin{cases}
S_t=S_{xx}+B(t,N)N-\omega(t) SI-\mu(t,N)S+\gamma(t) I,\\
I_t=I_{xx}+\omega(t) SI-\mu(t,N)I-\gamma(t) I,
\end{cases}
\end{equation}
where $S$ represents the susceptible population density and $I$ the infectious population density, $N=S+I$ is the total population, function $B$ is the birth rate, $\omega$ is the transmission rate, $\mu$ is the death rate and $\gamma$ is the recovery rate. All time-dependent functions are assumed to be $T$-periodic and smooth for some $T>0$.

Adding $S$ and $I$ equations yields 
\begin{equation}\label{N}
N_t=N_{xx}+N[B(t,N)-\mu(t,N)].
\end{equation}
Once $N(t,x)$ is known, the infectious population satisfies 
\begin{equation}\label{I}
I_t=I_{xx}+I[\omega(t)N(t,x)-\mu(t,N(t,x))-\gamma(t)-\omega(t)I].
\end{equation}
In particular, $N(t,x)$ can be taken as a time periodic traveling wave solution of \eqref{N} with the form $N(t,x)=n(x-ct)$. As such, \eqref{I} is a special case of \eqref{main-eq} subject to suitable conditions. 

In the following, we apply Theorems \ref{ThTW} to study the existence of traveling wave solutions of \eqref{SIS}.  To ensure there is a KPP structure for the total population $N(t,x)$, we make the following assumption.
 \begin{enumerate}
 \item[(A1)] 	The functions $B$ and $\mu$ are smooth, and 
 $B(t,N)-\mu(t,N)$ is strictly decreasing in $N\in \R_+$;
 
 \item[(A2)]  $\int_0^T [B(t,0)-\mu(t,0)]dt>0$  and $B(t,M)-\mu(t,M) \leq 0$ for some $M>0$.
 \end{enumerate}

 By \cite[Theorem 5.2.1]{ZhaoBook},  we see   that the ordinary differential equation 
\begin{equation*}\label{N-ODE}
N'=N[B(t,N)-\mu(t,N)]
\end{equation*} 
admits a unique positive $T$-periodic state $N^*(t)$.  In view of   \cite[Lemma 4.1 and Theorem 4.2]{LYZ2006} with $\tau=0$ and $f(t,u,v)\equiv f(t,u)$,  
it easily follows that there is the minimal wave speed $c_N$ for periodic traveling wave solutions of \eqref{N} and 
\begin{equation*}\label{cn}
c_N=2\sqrt{\f{1}{T}\int_0^T [B(t,0)-\mu(t,0)]dt}.
\end{equation*}
Clearly, $N^*(t)$ and $c_N$ depends only on the birth rate $B$ and the death rate $\mu$. 
Substituting $N=N^*(t)$ into \eqref{I} and assuming that $I$ is independent of $x$, we obtain a time periodic ordinary differential equation
\begin{equation}\label{I-hom}
I'=I[A(t)-\omega(t)I],
\end{equation}
where 
\[
A(t):=\omega(t)N^*(t)-\mu(t,N^*(t))-\gamma(t).
\]
Let  $\bar{A}:=\f{1}{T}\int_0^T A(t)dt$. Then we have the following observation (see \cite[Theorem 5.2.1]{ZhaoBook}).
\begin{lemma}
Equation \eqref{I-hom} admits a unique positive periodic solution $I^*(t)$ if and only if $\bar{A}>0$. Moreover, $I^*(t)<N^*(t)$.
\end{lemma}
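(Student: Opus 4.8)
The plan is to treat the two assertions separately: the sharp existence/uniqueness criterion in terms of $\bar A$, and the pointwise bound $I^*<N^*$. The first is the scalar periodic logistic dichotomy, which is exactly \cite[Theorem 5.2.1]{ZhaoBook} applied to $f(t,I):=I[A(t)-\omega(t)I]$; here the required structural hypotheses hold because $\omega(t)>0$ makes $I\mapsto f(t,I)/I=A(t)-\omega(t)I$ strictly decreasing (the subhomogeneity used throughout the paper), and the threshold quantity is precisely the average $\bar A=\f1T\int_0^T A(t)\,dt$. Thus the theorem yields a globally attracting positive periodic solution $I^*(t)$ when $\bar A>0$ and forces every positive solution to decay to $0$ when $\bar A\le0$, which is the claimed equivalence.

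To make the mechanism transparent (and self-contained), I would linearize \eqref{I-hom} by the Bernoulli substitution $v=1/I$, which turns it into $v'=-A(t)v+\omega(t)$. The associated Poincar\'e map is the affine map $v_0\mapsto e^{-\bar A T}v_0+b$, where $b=e^{-\bar A T}\int_0^T e^{\int_0^s A(\tau)\,d\tau}\omega(s)\,ds>0$ because $\omega>0$. This map has the unique fixed point $v^*=b/(1-e^{-\bar A T})$ whenever $\bar A\neq0$, and since $b>0$ this fixed point is positive precisely when $e^{-\bar A T}<1$, i.e. when $\bar A>0$; when $\bar A=0$ there is no fixed point, and when $\bar A<0$ the fixed point is negative. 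Since $v'=\omega>0$ wherever $v=0$, the corresponding solution stays positive, so $I^*=1/v^*$ is the sought positive periodic solution, and uniqueness is inherited from the uniqueness of the affine fixed point.

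For the bound $I^*<N^*$ I would show that $N^*$ is a periodic super-solution of \eqref{I-hom}. Substituting $I=N^*$ into the vector field and using $N^{*\prime}=N^*[B(t,N^*)-\mu(t,N^*)]$ together with the definition of $A$, the defect collapses to $N^{*\prime}-N^*[A-\omega N^*]=N^*\big(B(t,N^*)+\gamma(t)\big)\ge0$, the sign coming from the nonnegativity of the birth and recovery rates. Hence $N^*$ is a super-solution, and since $I^*$ is globally attracting among positive solutions, comparing the solution issued from $I(0)=N^*(0)$ (which stays below $N^*$) and letting $t\to\infty$ along $t=nT+s$ gives $I^*\le N^*$.

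The main obstacle is upgrading this to the strict inequality $I^*<N^*$. If the two periodic orbits touched at some $t_0$, then $w:=N^*-I^*\ge0$ would satisfy a linear differential inequality $w'\ge h(t)w$ with $w(t_0)=0$, obtained from the mean value theorem, forcing $w\equiv0$ by the scalar maximum principle and periodicity; this would make $N^*$ an exact solution of \eqref{I-hom}, i.e. $B(t,N^*)+\gamma(t)\equiv0$, which is excluded once the birth (or recovery) rate is not identically zero. This strictness step, together with pinning down exactly which positivity hypothesis on $B,\gamma,\omega$ is needed, is the only delicate point; the rest is routine.
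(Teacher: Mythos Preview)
Your proposal is correct, and its starting point coincides with the paper's: the paper gives no argument beyond the parenthetical ``(see \cite[Theorem~5.2.1]{ZhaoBook})'', so the citation you invoke for the existence/uniqueness dichotomy is exactly the paper's entire proof. Where you go further is in supplying content the paper leaves to the reference: the Bernoulli substitution $v=1/I$ makes the threshold $\bar A>0$ completely explicit via the affine Poincar\'e map, and your super-solution computation $N^{*\prime}-N^*[A-\omega N^*]=N^*[B(t,N^*)+\gamma(t)]\ge 0$ together with the touching/strong-maximum-principle argument gives the strict bound $I^*<N^*$, which the paper simply asserts without isolating what is needed. Your caveat is accurate: the strictness relies on $B(\cdot,N^*)+\gamma(\cdot)\not\equiv 0$, which is implicit in the biological interpretation (nonnegative birth and recovery rates, not both identically zero) but is not singled out among the hypotheses (A1)--(A2). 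So your argument is not a different route but a fleshed-out version of what the paper delegates to the citation.
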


Applying Theorem \ref{ThTW} to equation \eqref{I} with $N(t,x)=n(t, x-ct), c\ge c_N$, we obtain the following result.

\begin{theorem}\label{Thapp}
Assume that (A1) and (A2) hold. Then system \eqref{SIS} admits a time periodic traveling wave connecting the endemic periodic state $(S^*(t), I^*(t))$ to $(0,0)$ with speed $c$ if and only if $\bar{A}>\f{(c_N)^2}{4}$ and $c\in[c_N, 2\sqrt{\bar{A}})$.
\end{theorem}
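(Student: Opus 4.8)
The plan is to decouple \eqref{SIS} into the two scalar waves of which it is built. A periodic traveling wave of \eqref{SIS} connecting $(S^*(t),I^*(t))$ to $(0,0)$ with speed $c$ is equivalent to a pair consisting of a periodic traveling wave $N(t,x)=n(t,x-ct)$ of the total-population equation \eqref{N} connecting $N^*(t)$ to $0$, together with a forced KPP wave $I(t,x)=U(t,x-ct)$ of the infective equation \eqref{I} in which $N$ is frozen at this profile, the susceptible component being recovered as $S=N-I$. I would therefore first note that, since \eqref{N} is of KPP type under (A1)--(A2), such an $N$-wave exists precisely when $c\ge c_N$; and then regard \eqref{I} with $N(t,x)=n(t,x-ct)$ as an instance of \eqref{main-eq}, in order to invoke Theorem \ref{ThTW}.

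The next step is to check the hypotheses (G1)--(G4) for the reaction
\[
g(t,\xi,I)=\omega(t)n(t,\xi)-\mu(t,n(t,\xi))-\gamma(t)-\omega(t)I.
\]
Smoothness and $T$-periodicity give (G1). Because $n(t,-\infty)=N^*(t)$ and $n(t,+\infty)=0$, the two relevant limits are $g(t,-\infty,I)=A(t)-\omega(t)I$ and $g(t,+\infty,I)=-\mu(t,0)-\gamma(t)-\omega(t)I$; hence the limiting scalar ODE $u'=ug(t,-\infty,u)$ is exactly \eqref{I-hom}, its globally stable periodic state is $\alpha(t)=I^*(t)$, and the speed supplied by \eqref{def-cstar} is
\[
c^*=2\sqrt{\f{1}{T}\int_0^T g(t,-\infty,0)\,dt}=2\sqrt{\bar A}.
\]
Assumption (G3) amounts to $\bar A>0$ (subsumed below) plus $g(t,-\infty,M')\le 0$ for large $M'$, which is immediate from the $-\omega(t)I$ term, while (G4) holds because $g(t,+\infty,0)=-\mu(t,0)-\gamma(t)<0$, and the $u$-part of (G2) holds since $g$ is affine decreasing in $I$.

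The step I expect to be the main obstacle is the monotonicity in $\xi$ required by (G2). Here
\[
\partial_\xi g=\bigl[\omega(t)-\partial_N\mu(t,n(t,\xi))\bigr]\,\partial_\xi n(t,\xi),
\]
and although the KPP profile is monotone, so that $\partial_\xi n\le 0$, one still must control the sign of $\omega-\partial_N\mu$ along the range of $n$; that is, one needs $N\mapsto\omega(t)N-\mu(t,N)$ to be nondecreasing on $[0,\max_{t}N^*(t)]$. This is precisely the ``suitable condition'' under which \eqref{I} belongs to the class \eqref{main-eq}. Granting it, the inequality $n\le N^*$ further yields $g(t,\xi,I)\le g(t,-\infty,I)$ for all $\xi$, which is the form in which monotonicity is actually used in the super-solution arguments, completing (G2).

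With (G1)--(G4) verified, Theorem \ref{ThTW} furnishes a forced KPP wave $U(t,\xi)$ of \eqref{I} connecting $I^*(t)$ to $0$ if and only if $c<c^*=2\sqrt{\bar A}$. Intersecting with the condition $c\ge c_N$ for the $N$-wave gives $c\in[c_N,2\sqrt{\bar A})$, and this interval is nonempty exactly when $c_N<2\sqrt{\bar A}$, i.e.\ when $\bar A>\f{(c_N)^2}{4}$ (which in particular forces $\bar A>0$). Both implications then follow from the equivalence above: given the wave, $N=S+I$ solves \eqref{N} and $I$ is a forced wave of \eqref{I}, forcing $c\ge c_N$ and $c<2\sqrt{\bar A}$; conversely, for $c$ in this range I would assemble $n$ and $U$ and then confirm that $(S,I)=(n-U,U)$ is admissible. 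The only remaining point is $S\ge 0$: substituting $I=N$ into the parabolic operator of \eqref{I} leaves the residual $N\bigl[B(t,N)+\gamma(t)\bigr]>0$, so $N$ is a strict super-solution of the $I$-equation; by comparison $U\le n$, whence $S=n-U\ge 0$ connects $N^*-I^*=S^*$ to $0$.
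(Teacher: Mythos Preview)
Your approach is exactly the paper's: the proof there is literally the single sentence ``Applying Theorem~\ref{ThTW} to equation~\eqref{I} with $N(t,x)=n(t,x-ct)$, $c\ge c_N$, we obtain the following result.'' You have simply made explicit what the paper leaves implicit---the verification of (G1)--(G4), the identification $c^*=2\sqrt{\bar A}$, the intersection $[c_N,2\sqrt{\bar A})$, and the check that $S=n-U\ge 0$---and in doing so you correctly flagged the extra structural hypothesis (that $N\mapsto\omega(t)N-\mu(t,N)$ be nondecreasing on the range of $n$) which the paper only alludes to as ``suitable conditions'' and which is not contained in (A1)--(A2).
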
 
In the rest of this section, we investigate how the parameters in system \eqref{SIS} influence $\bar{A}$. We focus on two factors: (i) amplitude of $\omega(t)$; (ii) oscillating period of time periodic parameters.

(i) We assume that $\omega(t)=l\tilde{\omega}(t)$ with $l>0$. Then $\bar{A}$ is strictly increasing in $l$ and there exist $0<l_*<l^*$ such that $\bar{A}>0$ if and only if $l>l_*$, and $\bar{A}>\f{(c_N)^2}{4}$ if and only if $l>l^*$. This, together with Theorem \ref{Thapp}, suggests that  for small transmission rate (i.e., $l<l_*$) the infectious population dies out  eventually and the susceptible population propagates with the speed $c_N$; for moderate transmission rate (i.e., $l\in (l_*, l^*)$) the susceptible population propagates with speed $c_N$, which is faster than the infectious; for large transmission rate (i.e., $l>l^*$) two populations propagate together with the same speed $c_N$.

(ii) We assume that $\omega(t)=\tilde{\omega}(\f{t}{T}), B(t,N)=\tilde{B}(\f{t}{T}, N), \mu(t,N)=\tilde{\mu}(\f{t}{T}, N), \gamma(t)=\tilde{\gamma}(\f{t}{T})$. Then $N^*(t)=N^*_T(t)$ satisfies
\[
\f{d}{dt}N_T^*(t)=N_T^*(t)\left[\tilde{B}\left(\f{t}{T}, N_T^*(t)\right)-\tilde{\mu}\left(\f{t}{T}, N_T^*(t)\right)\right].
\]
By the change of variables $t=Ts$ and $N_T^*(Ts)=v_T(s)$, it follows that
\begin{equation}\label{periodic-ode}
\begin{cases}
\f{d}{ds}v_T(s)=T v_T(s)[\tilde{B}(s,v_T(s))-\tilde{\mu}(s,v_T(s))],\\
v_T(s+1)=v_T(s)
\end{cases}
\end{equation}
and 
\begin{equation*}
\bar{A}=\int_0^1\left[ \tilde{\omega}(s)v_T(s) -\tilde{\mu}(s, v_T(s) )-\tilde{\gamma}(s) \right]ds.
\end{equation*}
Next we pass $T$ to $0$ and $\infty$, respectively,  to see how the oscillating period influences $\bar{A}$. For this purpose, we need the following result, which is of its own interest. 
\begin{lemma}\label{ode-limit}
Let $v_T, T>0$,  be the solution of \eqref{periodic-ode}. Then
$\lim_{T\to 0}v_T(s)=v_0$ and $\lim_{T\to\infty}[v_T(s)-v_\infty(s)]=0$ 
uniformly for $s\in[0,1]$, where $v_0$ is the unique zero of $\int_0^1 [\tilde{B}(s,w)-\tilde{\mu}(s,w)]ds$ and $v_\infty(s)$ is the unique zero of $\tilde{B}(s,w)-\tilde{\mu}(s,w)$. 
\end{lemma}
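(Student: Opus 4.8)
Write $h(s,w):=\tilde B(s,w)-\tilde\mu(s,w)$, which by (A1) is $C^1$ and $1$-periodic in $s$ and strictly decreasing in $w$, while (A2) gives $\int_0^1 h(s,0)\,ds>0$ and $h(s,M)\le 0$. The two target profiles are characterized by $h(s,v_\infty(s))=0$ for each $s$ and by $G(v_0)=0$, where $G(w):=\int_0^1 h(s,w)\,ds$; both $v_\infty(s)$ and $v_0$ lie in $(0,M]$ and are unique because $h(s,\cdot)$ and $G$ are strictly decreasing. The first thing I would record is a $T$-independent a priori bound: writing $\underline m:=\min_s v_\infty(s)$ and $\overline M:=\max_s v_\infty(s)$, the constants $\underline m$ and $\overline M$ are, respectively, a sub- and a super-solution of \eqref{periodic-ode} for every $T>0$, since $h(s,\underline m)\ge h(s,v_\infty(s))=0\ge h(s,\overline M)$; hence by comparison $\underline m\le v_T(s)\le\overline M$ for all $s$ and all $T$, and the same monotonicity places $v_0\in[\underline m,\overline M]$. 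Dividing \eqref{periodic-ode} by $v_T>0$ and integrating over $[0,1]$, the left side telescopes to $\ln v_T(1)-\ln v_T(0)=0$, which yields the averaged identity
\[
\int_0^1 h(s,v_T(s))\,ds=0,\qquad \forall\,T>0.
\]

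For the regime $T\to 0$ I would combine this identity with a smallness-of-oscillation estimate. The uniform bounds give $|v_T'(s)|=T\,v_T\,|h(s,v_T)|\le TC_0$ with $C_0$ independent of $T$, so $v_T$ is $TC_0$-Lipschitz and its oscillation tends to $0$; hence $v_T(s)=\bar v_T+O(T)$ uniformly, where $\bar v_T:=\int_0^1 v_T$. Substituting into the averaged identity and using that $h$ is Lipschitz in $w$ on the compact range $[\underline m,\overline M]$ gives $G(\bar v_T)=O(T)$, and since $G$ is continuous and strictly decreasing with unique zero $v_0$, we obtain $\bar v_T\to v_0$ and therefore $v_T\to v_0$ uniformly.

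For the regime $T\to\infty$ I would run a barrier/comparison argument. Fix $\epsilon>0$. Because $h$ is continuous, strictly decreasing in $w$, and $[0,1]\times[\underline m,\overline M]$ is compact, there is $\eta=\eta(\epsilon)>0$ with $h(s,v_\infty(s)+\epsilon)\le-\eta$ and $h(s,v_\infty(s)-\epsilon)\ge\eta$ for all $s$. Choosing a $C^1$, $1$-periodic function $\overline w_\epsilon$ with $v_\infty+\tfrac\epsilon2\le\overline w_\epsilon\le v_\infty+\epsilon$ (a mollification of the continuous periodic profile $v_\infty+\epsilon$, which sidesteps differentiating $v_\infty$), the reaction term $T\,\overline w_\epsilon\,h(s,\overline w_\epsilon)\le -T\,\underline m\,\tfrac\eta2\to-\infty$ dominates the bounded derivative $\overline w_\epsilon'$, so $\overline w_\epsilon$ is a periodic super-solution of \eqref{periodic-ode} once $T$ is large; a symmetric construction gives a periodic sub-solution $\underline w_\epsilon$ with $v_\infty-\epsilon\le\underline w_\epsilon\le v_\infty-\tfrac\epsilon2$. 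The comparison principle (the Poincar\'e map of \eqref{periodic-ode} is monotone with the single positive fixed point corresponding to $v_T$, as in Lemma \ref{P}) then squeezes $v_\infty(s)-\epsilon\le v_T(s)\le v_\infty(s)+\epsilon$ for all large $T$; since $\epsilon$ is arbitrary, $v_T\to v_\infty$ uniformly.

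The a priori bounds and the $T\to0$ averaging are routine once the identity is in hand. I expect the main obstacle to be the $T\to\infty$ limit, where the care lies in producing genuine $C^1$ periodic barriers carrying a uniform reaction gap $\eta(\epsilon)$ (hence the mollification step, which avoids the possible nondifferentiability of $v_\infty$ when $\partial_w h$ degenerates) and in invoking the monotone comparison principle for the periodic problem to convert the one-sided barriers into the two-sided squeeze.
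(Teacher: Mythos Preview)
Your proof is correct and follows essentially the same route as the paper: constant sub-/sup-solutions give the $T$-independent a priori bounds, the identity $\int_0^1 h(s,v_T(s))\,ds=0$ together with the vanishing oscillation handles $T\to0$, and barriers of the form $v_\infty\pm\epsilon$ handle $T\to\infty$ by comparison. Your mollification step is a mild refinement: the paper uses $\bar v_\epsilon(s)=v_\infty(s)+\epsilon$ directly and writes $\bar v_\epsilon'=v_\infty'$, which tacitly assumes $v_\infty\in C^1$ (something the implicit function theorem delivers only if $\partial_w h(s,v_\infty(s))<0$, not literally forced by (A1)); your smoothed barrier avoids this point.
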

\begin{proof}
  We first consider the case where $T\to 0$. By assumptions (A1) and (A2), we see that there exist $M>\delta>0$ such that $\bar{v}\equiv M$ is a sup-solution and $\underline{v}\equiv \delta$ is a sub-solution of \eqref{periodic-ode} for all $T>0$. By the uniqueness of positive periodic solutions of \eqref{periodic-ode}
  (see \cite[Theorem 5.2.1]{ZhaoBook}), we obtain $\delta\le v_T(s)\le M$ for all $T>0$ and $s\in[0,1]$. Hence, both $v_T'(s)$ and $v_T''(s)$ are uniformly bounded in $T$ and $s$. Therefore, passing $T\to 0$ in \eqref{periodic-ode} yields, up to subsequence, 
\[
\lim_{T\to 0}v_T(s)\equiv C, \,  \,  \text{uniformly for } s\in [0,1] ,  
\text{    for some    } C\in [\delta, M].
\]
Meanwhile,
\[
0\equiv \f{1}{T} \int_0^1 \f{v_T'(s)}{v_T(s)}ds=\int_0^1[ \tilde{B}(s,v_T(s))-\tilde{\mu}(s,v_T(s))]ds,
\]
in which, passing $T\to 0$, we see that $\int_0^1 [\tilde{B}(s,C)-\tilde{\mu}(s,C)]ds=0$. This implies that  $C=v_0$.

Next we consider the case where $T\to \infty$. Given $\epsilon>0$, we define $\bar{v}(s)=v_\infty(s)+\epsilon$. We claim $\bar{v}_\epsilon(s)$ is a sup-solution of \eqref{periodic-ode} when $T$ is sufficiently large. Indeed, By assumptions (A1) and (A2), we see that for such $\epsilon$ there exists $\delta>0$ such that $\tilde{B}(s,\bar{v}_\epsilon)-\tilde{\mu}(s,\bar{v}_\epsilon)>\delta$ for all $s\in[0,1]$.
\[
\bar{v}_\epsilon'-T\bar{v}_\epsilon[\tilde{B}(s,\bar{v}_\epsilon)-\tilde{\mu}(s,\bar{v}_\epsilon)]=v_\infty'-T\bar{v}_\epsilon[\tilde{B}(s,\bar{v}_\epsilon)-\tilde{\mu}(s,\bar{v}_\epsilon)]<v_\infty'-TM\delta,
\]
which is negative under the condition  that 
\[
T>\f{\max_{s\in[0,1]}|v_\infty'(s)|}{M\delta}.
\]
Thus, $\bar{v}_\epsilon(s)$ is a sup-solution of \eqref{periodic-ode} for all
sufficiently large $T$. Similarly, $\underline{v}(s):=v_\infty-\epsilon$ is a 
sub-solution for all sufficiently large $T$. It then follows that the unique positive solution $v_T(s)$ satisfies
\[
\underline{v}_\epsilon(s)\le v_T(s)\le \bar{v}_\epsilon(s), \quad \forall T\ge T_0(\epsilon),
\]
where $T_0(\epsilon)$ is a large number depending on $\epsilon$. This shows that  $\lim_{T\to\infty}[v_T(s)-v_\infty(s)]=0 $ uniformly in $s\in[0,1]$.
\end{proof}

With Lemma \ref{ode-limit} above, we immediately obtain
\begin{equation*}\label{Tto0}
\lim_{T\to 0}\bar{A}= \int_0^1 [\tilde{\omega}(s)v_0-\tilde{\mu}(s, v_0 )-\tilde{\gamma}(s) ]ds,
\end{equation*}
which implies that as the oscillating period shrinks to zero, the speed interval $[c_N, 2\sqrt{\bar{A}})$ is the same as that of the average system.  While as the oscillating period increases to infinity, we have
\begin{equation}\label{Ttoinfty}
\lim_{T\to \infty }\bar{A}=\int_0^1 [\tilde{\omega}(s)v_\infty(s)-\tilde{\mu}(s, v_\infty(s) )-\tilde{\gamma}(s) ]ds,
\end{equation}
where $\tilde{\omega}(s)$ and $v_\infty(s)$ are independent from each other.  Finally, we use an example to illustrate that $\lim_{T\to \infty }\bar{A}$ can be either greater or less than that of the average system. Indeed, choose $\tilde{\mu}(s,v)=\delta v$. Then in the right-hand side of  \eqref{Ttoinfty},  the integral of the last two terms are the average of $-\delta v_\infty(s)-\tilde{\gamma}(s)$. For the first term $\int_0^1 [\tilde{\omega}(s)v_\infty(s)ds$, we can infer that
\begin{equation*}
\sup_{\tilde{\omega}\in \Lambda}\int_0^1 \tilde{\omega}(s)v_\infty(s)ds=\max_{s\in[0,1]}v_\infty(s) ,\quad \inf_{\tilde{\omega}\in \Lambda}\int_0^1 \tilde{\omega}(s)v_\infty(s)ds=\min_{s\in[0,1]}v_\infty(s), 
\end{equation*}
where 
\begin{equation*}
\Lambda:=\left\{\tilde{\omega}\in C(\R,\R_+):\, \,  \tilde{\omega}(s)=\tilde{\omega}(s+1), \int_0^1\tilde{\omega}(s)ds=1 \right\}.
\end{equation*}

%\section*{Acknowledgement}

\end{document}